\documentclass[11pt]{article}
\usepackage{amsmath}
\usepackage{amsfonts} \makeatother
\usepackage{amsfonts,amsmath,amssymb,mathrsfs}
\usepackage{cite}
\usepackage{amsthm}

\numberwithin{equation}{section}
\newtheorem{theorem}{Theorem}[section]
\newtheorem{lemma}[theorem]{Lemma}
\newtheorem{proposition}[theorem]{Proposition}
\newtheorem{remark}{Remark}[section]

\newtheorem{corollary}[theorem]{Corollary}
\newtheorem{example}{Example}[section]

\newcommand{\be}{\begin{equation}}
\newcommand{\ee}{\end{equation}}
\newcommand\bes{\begin{eqnarray}}
\newcommand\ees{\end{eqnarray}}
\newcommand{\bess}{\begin{eqnarray*}}
\newcommand{\eess}{\end{eqnarray*}}

\newcommand\s{s}
\renewcommand{\div}{\textnormal{div}}

\newcommand{\Fix}{\textnormal{Fix}}

\newcommand\eps{\varepsilon}

\def\R{\mathbb{R}}
\def\N{\mathbb{N}}

\setlength{\textwidth}{6.5truein} \setlength{\textheight}{9.3truein}
\setlength{\oddsidemargin}{-0.0in}
\setlength{\evensidemargin}{-0.0in}
\setlength{\topmargin}{-0.4truein}

\begin{document}

\begin{center} {\bf\Large Ground states and high energy solutions of the

                    planar Schr\"{o}dinger-Poisson system}
\\[4mm]
 {\large Miao Du$^{\text{a}}$, \ \ Tobias Weth$^{\text{b},\ast}$}\\[2mm]
{

$^{\textrm{a}}$ School of Mathematical Sciences, Nanjing Normal University,

     Nanjing 210023, P.R. China

$^{\textrm{b}}$  Institut f\"{u}r Mathematik, Goethe-Universit\"{a}t Frankfurt,

     D-60629 Frankfurt am Main, Germany

}
\renewcommand{\thefootnote}{}
\footnote{\hspace{-2ex}$^{\ast}$ Corresponding author}
\footnote{\hspace{-2ex}\emph {~E-mail addresses:}
 dumiaomath@163.com, weth@math.uni-frankfurt.de.}
\end{center}

\setlength{\baselineskip}{16pt}

\begin{quote}
  \noindent {\bf Abstract:} In this paper, we are concerned with
    the Schr\"{o}dinger-Poisson system
    \begin{equation}\label{0-1}
       \begin{cases}
          -\Delta u + u +\phi u  = |u|^{p-2}u
            &\text{in}\ \mathbb{R}^{d},\\
          \Delta \phi= u^{2} &\text{in}\ \mathbb{R}^{d}.
       \end{cases}
    \end{equation}
Due to its relevance in physics, the system has been extensively studied and is quite well understood in the case $d \ge 3$. In contrast, much less information is available in the planar case $d=2$ which is the focus of the present paper. It has been observed by Cingolani and the second author \cite{Cingolani-Weth-2016} that the variational structure of (\ref{0-1}) differs substantially in the case $d=2$ and leads to a richer structure of the set of solutions. However, the variational approach of \cite{Cingolani-Weth-2016} is restricted to the case $p \ge 4$ which excludes some physically relevant exponents. In the present paper, we remove this unpleasant restriction and explore the more complicated underlying functional geometry in the case $2<p<4$ with a different variational approach.

  \noindent {\bf MSC}: 35J50; 35Q40

  \noindent {\bf Keywords}: {Schr\"{o}dinger-Poisson system;
    Logarithmic convolution potential; Ground state solutions;
    Variational methods}
\end{quote}

\section{Introduction}

$ $
\indent
The present paper is devoted to standing (or solitary) wave
solutions of Schr\"{o}dinger-Poisson systems of the type
\begin{equation}\label{1-2}
   \begin{cases}
      i\psi_{t}- \Delta \psi + W(x)\psi + m \phi \psi = |\psi|^{p-2}\psi
        & \text{in}\ \mathbb{R}^{d}\times \mathbb{R},\\
      \Delta \phi= |\psi|^{2}   & \text{in}\ \mathbb{R}^{d},
   \end{cases}
\end{equation}
where $\psi:\mathbb{R}^{d}\times\mathbb{R}\rightarrow\mathbb{C}$
is the time-dependent wave function, $W:\mathbb{R}^{d}\rightarrow
\mathbb{R}$ is a real external potential, $m \in\mathbb{R}$
is a parameter and $2<p<2^{*}$. Here, $2^{\ast}$ is the so-called
critical Sobolev exponent, that is, $2^{\ast}=\frac{2d}{d-2}$
in case $d\geq3$ and  $2^{\ast}=\infty$ in case $d=1,2$.
Evolution problems of the type (\ref{1-2}) arise in many problems from physics. We refer the reader e.g. to \cite{Mauser-2001}, where (\ref{1-2}) is discussed in a quantum mechanical context where the particular exponent $p=2+\frac{2}{d}$ appears in the case $d \le 3$, see \cite[p. 761]{Mauser-2001}. The function $\phi$ represents an internal potential for a nonlocal self-interaction of the wave function $\psi$.
The usual ansatz
\begin{equation*}
   \psi(x,t)=e^{-i\omega t}u(x)\quad\text{with}
   \ \omega \in \mathbb{R}
\end{equation*}
for standing wave solutions of (\ref{1-2}) leads to the Schr{\"o}dinger-Poisson system
\begin{equation}\label{1-3}
   \begin{cases}
      -\Delta u+V(x)u+ m \phi u=|u|^{p-2}u
        &\text{in}\ \mathbb{R}^{d},\\
      \Delta \phi= u^{2}   &\text{in}\ \mathbb{R}^{d},
   \end{cases}
\end{equation}
where $V(x)=W(x)+\omega$. The second equation in \eqref{1-3} determines
$\phi: \mathbb{R}^{d}\rightarrow\mathbb{R}$ only up to harmonic
functions. It is natural to choose $\phi$ as the negative Newton potential
of $u^{2}$, that is, the convolution of $u^{2}$ with the fundamental
solution $\Phi_{d}$ of the Laplacian, which is given by $\Phi_{d}(x)= -\frac{1}{d(d-2)\omega_{d}}|x|^{2-d}$ in case $d\geq3$ and $\Phi_{d}(x)=\frac{1}{2 \pi} \log |x|$ in case $d=2$. Here $\omega_{d}$ denotes the volume of the unit ball in $\mathbb{R}^{d}$. With this formal inversion of
the second equation in \eqref{1-3}, we obtain the integro-differential
equation
\begin{equation}\label{1-4}
   -\Delta u+V(x)u+ m \left(\Phi_{d}\ast|u|^{2}\right)u =|u|^{p-2}u
   \quad \text{in}\ \mathbb{R}^{d}.
\end{equation}
In the three dimensional case, equation (\ref{1-4}) and its generalizations have been widely studied in recent years, whereas existence, nonexistence and multiplicity results have been obtained under variant assumptions on $V$ and $f$ via variational methods,  see e.g. \cite{Ambrosetti-Ruiz-2008,Azzollini-Pomponio-2008,Bellazzini-Jeanjean-Luo-2008,
Cerami-2010,DAprile-2004-1,DAprile-2005,Ruiz-2006,Wang-Zhou-2007,Zhaoleiga-2013,
Zhaoleiga-2008} and the references therein.

We note that,  at least formally,
\eqref{1-4} has a variational structure related to the energy functional
\begin{align}
   u &\mapsto I(u)=\label{def-functional} \nonumber\\
&\frac{1}{2}\int_{\mathbb{R}^{d}}\left(|\nabla u|^{2}
     +V(x) u^{2}\right)dx+\frac{m}{4}\int_{\mathbb{R}^{d}}
     \int_{\mathbb{R}^{d}}\Phi_{d}\left(|x-y|\right)u^{2}(x)u^2(y)\,dxdy -\frac{1}{p} \int_{\mathbb{R}^{d}}|u|^p dx.
\end{align}
However, in the case where $d=2$ and $\Phi_{d}(x)= \frac{1}{2 \pi} \log |x|$,  the functional $I$ is not well-defined on $H^{1}(\mathbb{R}^{2})$, and this is one of the reasons why much less is known in the planar case.  Inspired by Stubbe \cite{Stubbe-2008}, Cingolani and Weth \cite{Cingolani-Weth-2016} developed a variational framework for the
equation
\begin{equation}
\label{1.6}
    -\Delta u+ V(x)u+\left(\log\left(|\cdot|\right)\ast|u|^{2}\right)u
     =b|u|^{p-2}u  \quad  \text{in}\ \mathbb{R}^{2}
\end{equation}
within the smaller Hilbert space
\begin{equation*}
    X:=\left\{u \in H^{1}(\mathbb{R}^{2}):\int_{\mathbb{R}^{2}}
    \log\left(1+|x|\right)u^{2}dx < \infty \right\}.
\end{equation*}
In this work,  a positive function $V \in L^{\infty}(\mathbb{R}^{2})$, $b\geq 0$ and
$p\geq4$ are considered. In particular, high energy solutions were detected in \cite{Cingolani-Weth-2016} in a periodic setting where the
corresponding functional is invariant under $\mathbb{Z}^{2}$-translations
and therefore fails to satisfy a global Palais-Smale condition. In the case where the external potential $V$ is a positive constant, they also obtained the existence of nonradial
solutions which have arbitrarily many nodal domains. The key tool in \cite{Cingolani-Weth-2016} is a strong compactness
condition (modulo translation) for Cerami sequences at arbitrary positive energy levels. Such a property fails to hold in higher space dimensions, and it is also not available in the case where $2<p<4$.
More precisely, it is an open question whether general Cerami sequences for $I$ in $X$ are bounded in the case $2<p<4$, and therefore no existence results for (\ref{1.6}) have been available for this case. This gap of information is unpleasant not only from a mathematical point of view but also since, as already remarked above, the case $p=3$ is relevant in 2-dimensional quantum mechanical models, see \cite[p. 761]{Mauser-2001}.

The purpose of this paper is  to fill this gap and to provide a counterpart of the results in \cite{Cingolani-Weth-2016} in the case where $2<p<4$ and $V$ is a positive constant.  In particular, we shall prove the existence of  ground state solutions and infinitely many nontrivial sign-changing solutions for \eqref{1.6} in this case.

At this point, we wish to point out some difficulties of the variational approach in the space $X$. First, while the functional $I$ is translation invariant, the norm of $X$ is not translation invariant. Second,
the quadratic part of $I$ is not coercive on $X$. These difficulties have been overcome in \cite{Cingolani-Weth-2016}, and we will partly follow the approach developed there.
The key new difficulty in the case where $2<p<4$ is the competing nature of the local and nonlocal superquadratic terms in the functional $I$ and their different behaviour under scaling transformations. In particular, we note that the nonlinearity $u \mapsto f(u):=|u|^{p-2}u$ with $2<p<4$ does not satisfy the Ambrosetti-Rabinowitz type condition
$$
0<\mu\int_{0}^{u} f(s)ds \leq f(u)u \qquad \text{for all $u\neq0$ with some $\mu>4$}
$$
which would readily imply the boundedness of Palais-Smale sequences. Moreover, the fact that the function $f(s)/|s|^{3}$ is not increasing on $\R \setminus \{0\}$ prevents
us from using Nehari manifold and fibering methods as e.g. in \cite{Rabinowitz-1992,Szulkin-Weth}.
Moreover, in contrast to the higher dimensional case, the logarithmic convolution term does not have a definite sign on $X$. As a consequence,  the boundedness
of Cerami sequences becomes a major difficulty in the variational setting.

To state our main results, we assume that $V$ in (\ref{1-3}) and (\ref{def-functional}) is constant from now on, and without loss we may assume that $V \equiv 1$. Moreover, we focus on (\ref{1-3}) in the case $m>0$, and by rescaling we may assume $m = 2\pi$. Consequently, we are dealing with system (\ref{1-3}), the associated scalar equation
\begin{equation}
\label{1-5}
    -\Delta u+ u+\left(\log\left(|\cdot|\right)\ast|u|^{2}\right)u
     =|u|^{p-2}u  \quad  \text{in}\ \mathbb{R}^{2}
\end{equation}
and the associated energy functional $I: X \to \R$ defined by
\begin{equation}
\label{def-functional-1}
 I(u)=\frac{1}{2}\int_{\mathbb{R}^{2}}\left(|\nabla u|^{2}
     +u^{2}\right)dx + \frac{1}{4}\int_{\mathbb{R}^{2}}
     \int_{\mathbb{R}^{2}}\log \left(|x-y|\right)u^{2}(x)u^2(y)\,dxdy -\frac{1}{p} \int_{\mathbb{R}^{2}}|u|^p dx.
\end{equation}
In the following, by a solution of \eqref{1-5} we always mean a weak solution, i.e. a critical point of $I$.

\begin{remark}
\label{sec:regularity}
{\rm It has been proved in \cite[Proposition 2.3]{Cingolani-Weth-2016} that every critical point $u \in X$ of $I$ is a classical solution of class $C^2$ satisfying $u(x)= o(e^{-\alpha |x|})$ as $|x| \to \infty$ for any $\alpha>0$. Moreover, the function $x \mapsto
w(x)=\int_{\R^2}\log|x-y| u^2(y)\,dy$ is of class $C^3$ on $\R^2$ and satisfies
\begin{equation}
  \label{eq:asymptotics-w}
\Delta w = 2 \pi u^2 \quad \text{in $\R^2$,}\qquad \quad w(x) - \log |x| \int_{\R^2}u^2\,dy
\to 0 \quad \text{as $|x| \to \infty$.}
\end{equation}
It has been assumed that $p \ge 4$ in \cite{Cingolani-Weth-2016}, but the proof of \cite[Proposition 2.3]{Cingolani-Weth-2016} carries over to the case $p \ge 2$ without change.}
\end{remark}

Our first main result is concerned with the existence of mountain pass and ground state solutions. For this we
define the mountain pass value
\begin{equation}
  \label{eq:def-c_mp}
    c_{mp}=\inf_{\gamma \in \Gamma}\max_{t \in [0,1]}I(\gamma(t))\qquad
    \text{with}\qquad \Gamma=\left\{\gamma\in C\left([0,1],\: X\right)
    \ | \ \gamma (0)=0, \ I(\gamma(1))<0\right\}.
\end{equation}

\begin{theorem}\label{th1-1}
Suppose that $p>2$.
\begin{enumerate}
\item[\rm(i)] We have $c_{mp}>0$, and  equation \eqref{1-5} has a solution $u \in X \setminus \{0\}$ with $I(u)=c_{mp}$.
\item[\rm(ii)] Equation \eqref{1-5} has a ground state solution, i.e., a solution $u \in X \setminus \{0\}$ such that $I(u)$ equals the ground state energy
\begin{equation}
\label{def-ground-state-energy}
    c_g:=\inf\left\{I(v)\::\: v\in X\setminus\{0\}\ \text{is\ a\ solution\ of}
    \ \eqref{1-5}\right\}.
\end{equation}
\end{enumerate}
\end{theorem}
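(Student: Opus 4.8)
The plan is to remove the unboundedness difficulty through a monotonicity trick in the spirit of Jeanjean, combined with the concentration-compactness analysis for the logarithmic convolution developed in \cite{Cingolani-Weth-2016}. Throughout I use the splitting $\log|x-y| = \log(1+|x-y|) - \log(1+\frac{1}{|x-y|})$ and the bilinear forms $B_1(u,v)=\int_{\R^2}\int_{\R^2}\log(1+|x-y|)u(x)v(y)\,dx\,dy$ and $B_2(u,v)=\int_{\R^2}\int_{\R^2}\log(1+\frac{1}{|x-y|})u(x)v(y)\,dx\,dy$, so that, writing $B_0=B_1-B_2$, the nonlocal part of $I$ is $\frac14 B_0(u^2,u^2)$. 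The advantage is that $B_1(u^2,u^2)\ge0$ is governed by the weighted term defining $X$, whereas $B_2(u^2,u^2)\le C\|u\|_{8/3}^4$ is controlled by the $H^1$-norm alone via the bound $\log(1+\frac1r)\le\frac1r$ and Hardy--Littlewood--Sobolev. Writing $\|u\|_{H^1}^2=\int_{\R^2}(|\nabla u|^2+u^2)$, I first record the uniform mountain pass geometry of the family $I_\lambda(u)=\frac12\|u\|_{H^1}^2+\frac14 B_0(u^2,u^2)-\frac{\lambda}{p}\int_{\R^2}|u|^p$, $\lambda\in[1,2]$: the estimates above together with $p>2$ give $I_\lambda(u)\ge\frac12\|u\|_{H^1}^2-C\|u\|_{H^1}^4-C\|u\|_{H^1}^p\ge\alpha>0$ on a small sphere, uniformly in $\lambda$; and choosing $v$ spread out so that $B_0(v^2,v^2)<0$ (possible since $B_0(v_\sigma^2,v_\sigma^2)=B_0(v^2,v^2)-(\log\sigma)\|v\|_2^4$ for $v_\sigma=\sigma v(\sigma\cdot)$) makes $I_\lambda(tv)\to-\infty$ as $t\to\infty$. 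Since $I_\lambda\le I_1=I$ for $\lambda\ge1$, the endpoints are $\lambda$-independent, so $c_\lambda:=\inf_{\gamma\in\Gamma}\max_{t}I_\lambda(\gamma(t))$ is non-increasing in $\lambda$ with $0<\alpha\le c_\lambda\le c_{mp}=c_1$.

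The heart of the matter is the boundedness of the relevant sequences, precisely the point where the approach of \cite{Cingolani-Weth-2016} breaks down for $2<p<4$. Rather than bounding general Cerami sequences for $I$, I exploit the monotonicity of $\lambda\mapsto c_\lambda$: being monotone it is differentiable a.e., and at each differentiability point a deformation argument yields a Cerami sequence $(u_n)$ for $I_\lambda$ at level $c_\lambda$ along which $\int_{\R^2}|u_n|^p$ stays bounded, the bound being controlled by $-c_\lambda'$. The decisive algebraic fact is the identity $4I_\lambda(u)-I_\lambda'(u)u=\|u\|_{H^1}^2-\lambda(\frac4p-1)\int_{\R^2}|u|^p$, in which the indefinite nonlocal term cancels. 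Since $(u_n)$ is a Cerami sequence we have $I_\lambda'(u_n)u_n\to0$, so this identity converts the bound on $\int|u_n|^p$ into a bound on $\|u_n\|_{H^1}$, valid for every $p>2$; this is exactly what lifts the restriction $p\ge4$.

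Next I recover compactness modulo translation. The sequence $(u_n)$ is bounded in $H^1$ but possibly not in $X$, reflecting the translation invariance of $I_\lambda$ against the non-invariant norm of $X$. A Lions-type dichotomy applies: vanishing is excluded because $c_\lambda>0$ forces $\int|u_n|^p$ to stay bounded below, so after suitable integer translations the recentred sequence converges weakly in $H^1$ to some $u_\lambda\neq0$. Following \cite{Cingolani-Weth-2016}, one shows $u_\lambda\in X$ and $I_\lambda'(u_\lambda)=0$ with $I_\lambda(u_\lambda)=c_\lambda$, the delicate step being the behaviour of the nonlocal term under weak convergence, handled by a Brezis--Lieb splitting for $B_1$ and the $H^1$-compactness of $B_2$. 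This produces a nontrivial critical point of $I_\lambda$ for a.e.\ $\lambda$. Finally I let $\lambda_j\downarrow1$ along differentiability points with $-c_{\lambda_j}'$ bounded; since $\int|u_{\lambda_j}|^p=-p\,c_{\lambda_j}'$ is then bounded, so is $\|u_{\lambda_j}\|_{H^1}$ by the identity above, and recentring once more yields a weak limit $u_0\in X\setminus\{0\}$ solving \eqref{1-5}. Using $I(u_0)-I_{\lambda_j}(u_{\lambda_j})=\frac{\lambda_j-1}{p}\int|u_{\lambda_j}|^p\to0$ together with $c_{\lambda_j}\to c_{mp}$ gives $I(u_0)=c_{mp}$, which proves (i).

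For (ii), the solution set is now nonempty, so $c_g$ is well-defined with $c_g\le I(u_0)=c_{mp}$. For the reverse inequality I use the two-parameter scaling $\Theta(s,t)v(x)=e^s v(e^t x)$: for any nontrivial solution $v$ the function $(s,t)\mapsto I(\Theta(s,t)v)$ has a critical point at the origin, encoding the Nehari identity together with the Pohozaev identity $\|\nabla v\|_2^2=\frac14\|v\|_2^4+\frac{p-2}{p}\int_{\R^2}|v|^p$, and one shows this surface carries an admissible path in $\Gamma$ on which $I$ attains its maximum exactly at $v$; hence $c_{mp}\le I(v)$ for every nontrivial solution, so $c_g=c_{mp}$ is attained by $u_0$. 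I expect the main obstacle to be the compactness step: proving that the recentred $H^1$-bounded sequence converges to a genuine nontrivial critical point lying in $X$, since the loss of translation invariance of the $X$-norm interacts with the sign-indefinite term $B_0$, and excluding vanishing and dichotomy for the logarithmic kernel requires the precise estimates on $B_1$ and $B_2$. A secondary difficulty is the analysis of the scaling surface above, carried out in the absence of a Nehari-manifold structure.
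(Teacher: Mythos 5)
Your route---Jeanjean's monotonicity trick applied to the family $I_\lambda(u)=\frac12\|u\|^{2}+\frac14 V_0(u)-\frac{\lambda}{p}|u|_p^p$---is genuinely different from the paper's, which instead augments $I$ by the scaling parameter $\s\mapsto e^{2\s}v(e^{\s}\cdot)$ to produce a Cerami sequence with the extra information $J(u_n)\to0$ (Lemma~\ref{lem3-3}) and then extracts $H^1$-boundedness from that constraint (Proposition~\ref{lem3-4}). Your cancellation identity $4I_\lambda(u)-I_\lambda'(u)u=\|u\|^{2}-\lambda\bigl(\tfrac4p-1\bigr)|u|_p^p$ is correct and attractive, and the uniform mountain-pass geometry is fine (though Jeanjean's abstract theorem cannot be cited as a black box here: its coercivity hypothesis fails on $X$ by translation invariance, so the deformation producing an $X$-weighted Cerami sequence with bounded $L^p$-norm must be built by hand, as the paper does via Proposition~\ref{prop3-2}). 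The step that fails is the final limit in (i): you ``let $\lambda_j\downarrow1$ along differentiability points with $-c_{\lambda_j}'$ bounded'', but a monotone function need not admit such a sequence. Monotonicity gives only a.e.\ differentiability and integrability of $c_\lambda'$; nothing excludes, say, $c_\lambda=c_1-\sqrt{\lambda-1}$, for which $-c_{\lambda_j}'\to\infty$ along \emph{every} sequence of differentiability points $\lambda_j\downarrow1$. Hence the bound on $\int_{\R^2}|u_{\lambda_j}|^p$, and with it the $H^1$-bound, does not follow. The standard repair is to use that each $u_{\lambda_j}$ solves its own equation and therefore satisfies the Pohozaev-type identity of Lemma~\ref{lem2-4}, i.e.\ the analogue of $J(u_{\lambda_j})=0$, together with $I_{\lambda_j}(u_{\lambda_j})\le c_{mp}$; but converting this into an $H^1$-bound when $2<p\le3$ is precisely the delicate rescaling/Gagliardo--Nirenberg contradiction argument of Proposition~\ref{lem3-4}, Case 2, which your proposal never supplies---that argument is the actual core of the paper. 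A further untreated point: even granting boundedness, you obtain $I(u_0)=\lim_j c_{\lambda_j}$, and concluding $I(u_0)=c_{mp}$ requires right-continuity of $\lambda\mapsto c_\lambda$ at $\lambda=1$, which is not automatic for a monotone minimax value.

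Part (ii) is a more fundamental problem. Your argument needs $c_{mp}\le I(v)$ for \emph{every} nontrivial solution $v$, via an admissible path through $v$ on which $I$ attains its maximum exactly at $v$. This is exactly the saddle-point structure that the paper establishes only for $p\ge3$: along the fibres $t\mapsto t^2v(t\cdot)$ it is Lemma~\ref{lem4-1}/Lemma~\ref{lem4-2}, which requires $2p-2\ge4$, and the paper states explicitly that whether $c_g=c_{mp}$ holds is \emph{open} for $2<p<3$. Passing to the two-parameter surface $(s,t)\mapsto e^{s}v(e^{t}x)$ does not rescue this: the origin is indeed a critical point of the restricted functional (this encodes Nehari plus Pohozaev, and your identity $|\nabla v|_2^2=\frac14|v|_2^4+\frac{p-2}{p}|v|_p^p$ is correct), but nothing guarantees it is a maximum along an admissible path; for $2<p<3$ the fibre map may have several critical points and its maximum may exceed $I(v)$. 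The theorem is instead proved \emph{without} identifying $c_g$ and $c_{mp}$: since every solution satisfies $J=0$ by Lemma~\ref{lem2-4}, a minimizing sequence of solutions automatically satisfies the hypotheses of Propositions~\ref{lem3-4} and~\ref{prop3-5}, alternative (I) being excluded by \eqref{2-5-1}, so a translated subsequence converges in $X$ to a solution at level $c_g$. In short, your proof of (ii) is essentially Theorem~\ref{th1-1-1} and is viable in spirit for $p\ge3$, but it cannot be completed for $2<p<3$, which is the regime this theorem is chiefly about.
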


In case $2<p<3$, we do not know if the mountain pass energy $c_{mp}$ coincides with the ground state energy $c_g$. In the case where $p \ge 3$, we can derive more information. First, we can identify $c_g$ with $c_{mp}$, and we can give a natural characterization of the ground state energy as a constrained minimum and as a simple minimax value. Second, we will see that every ground state solution does not change sign.
For this purpose, inspired by \cite{Ruiz-2006}, we introduce the auxiliary functional $J: X \to \R$ defined by
\begin{align}
J(u)&=\int_{\R^2} \Bigl(2 |\nabla u|^2+ |u|^2 -\frac{2(p-1)}{p}|u|^{p}\Bigr)dx \label{eq:def-G}\\
&+ \int_{\mathbb{R}^{2}}\int_{\mathbb{R}^{2}}\log\left(|x-y|\right)u^{2}(x)u^{2}(y)dxdy-\frac{1}{4} \left(\int_{\mathbb{R}^{2}}|u|^{2}dx\right)^{2},\nonumber
\end{align}
and the set
\begin{equation}
  \label{eq:def-M}
\mathcal M:= \{u \in X \setminus \{0\}\::\: J(u)=0\}.
\end{equation}
It then follows in a standard way from a Pohozaev type identity given in Lemma~\ref{lem2-4} below that every solution of (\ref{1-5}) is contained in $\mathcal M$. Consequently, the minimal energy value
\begin{equation}
  \label{eq:def-c_M}
c_{\mathcal M}:= \inf_{u \in \mathcal M}I(u).
\end{equation}
on $\mathcal M$ satisfies
\begin{equation}
  \label{eq:eq-comp-c-g-c-M}
c_{\mathcal M} \le c_g \le c_{mp},
\end{equation}
where the latter inequality follows from Theorem~\ref{th1-1}(i). We also define the minimax value
\begin{equation}
  \label{eq:def-c_mm}
c_{mm}:= \inf_{u \in X \setminus \{0\}} \sup_{t > 0} I(u_t).
\end{equation}
where $u_t \in X$ is defined by $u_t(x):= t^2u(tx)$ for $u \in X$, $t > 0$.

\begin{theorem}\label{th1-1-1}
Suppose that $p \ge 3$. Then we have
\begin{equation}
\label{eq:equalitity-c}
c_g= c_{\mathcal M}= c_{mm}= c_{mp}.
\end{equation}
Moreover, this value is attained by $I$ on $\mathcal M$, and every function $u \in \mathcal M$ with $I(u)=c_{\mathcal M}$ is a (ground state) solution of \eqref{1-5} and does not change sign.
\end{theorem}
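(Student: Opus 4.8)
The plan is to organize everything around the scaling $t \mapsto u_t$, $u_t(x)=t^2u(tx)$, and the fiber map $g_u(t):=I(u_t)$. Using the scaling laws $\int|\nabla u_t|^2=t^4\int|\nabla u|^2$, $\int u_t^2=t^2\int u^2$, $\int|u_t|^p=t^{2p-2}\int|u|^p$ and $\int\!\!\int\log|x-y|\,u_t^2u_t^2 = t^4\bigl(\int\!\!\int\log|x-y|\,u^2u^2-\log t(\int u^2)^2\bigr)$, one obtains an explicit formula for $g_u$ on $(0,\infty)$. Writing $A=\int|\nabla u|^2$, $B=\int u^2>0$, $C=\int\!\!\int\log|x-y|\,u^2u^2$, $D=\int|u|^p$, this reads $g_u(t)=\tfrac{t^4}{2}A+\tfrac{t^2}{2}B+\tfrac{t^4}{4}C-\tfrac{t^4}{4}B^2\log t-\tfrac{t^{2p-2}}{p}D$. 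From it I read off that $g_u(t)>0$ for small $t>0$, that $g_u(t)\to-\infty$ as $t\to\infty$ when $p\ge 3$ (the term $-\tfrac14 t^4B^2\log t$ dominating in the borderline case $p=3$), and that $u_t\to 0$ in $X$ as $t\to 0^+$. Since $(u_t)_s=u_{st}$ and $J(w)=\frac{d}{ds}I(w_s)\big|_{s=1}$ by construction, one has the identity $J(u_t)=t\,g_u'(t)$; in particular $u\in\mathcal M$ iff $g_u'(1)=0$, and $u_t\in\mathcal M$ iff $t$ is a critical point of $g_u$.

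The heart of the argument, and the step I expect to be the main obstacle, is to show that $g_u$ has a unique critical point $t(u)\in(0,\infty)$, which is then its global maximum; this is exactly where the hypothesis $p\ge3$ becomes essential. I would prove that $g_u''(t)<0$ at every critical point. Substituting the relation $g_u'(t)=0$ into $t\,g_u''(t)$, the terms in $A$, $C$ and $B^2\log t$ cancel and one is left with
$$
t\,g_u''(t)=-2tB-t^3B^2+\frac{4(p-1)(3-p)}{p}\,t^{2p-3}D .
$$
For $p\ge3$ the last coefficient is nonpositive while the first two terms are strictly negative since $B>0$, so $g_u''(t)<0$ at every critical point. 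Together with the boundary behaviour of $g_u$ this forces exactly one critical point $t(u)$, so that $\sup_{t>0}I(u_t)=I(u_{t(u)})$ with $u_{t(u)}\in\mathcal M$. The obstruction when $2<p<3$ is visible here: the last term becomes positive and competes with the others, so $g_u$ may fail to have a unique maximiser. The map $u\mapsto u_{t(u)}$ thus projects $X\setminus\{0\}$ onto $\mathcal M$, and since $t(v)=1$ for $v\in\mathcal M$, the definitions immediately give $c_{mm}=c_{\mathcal M}$.

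Next I would close the chain of equalities. For any $u\ne0$, reparametrising $t\mapsto u_t$ on $[0,1]$ up to a large value $T$ with $I(u_T)<0$ produces an admissible path in $\Gamma$ (continuity at $t=0$ in the $X$-norm follows because $\|u_t\|^2=t^4A+t^2B\to0$ and the weighted term is controlled by $t^2(\,\int\log(1+|y|)u^2 -\log t\int u^2)\to0$), whose maximal energy is $\sup_{t>0}I(u_t)$; taking the infimum over $u$ gives $c_{mp}\le c_{mm}$. Combined with $c_{\mathcal M}\le c_g\le c_{mp}$ from \eqref{eq:eq-comp-c-g-c-M} and $c_{mm}=c_{\mathcal M}$, we get $c_{\mathcal M}\le c_g\le c_{mp}\le c_{mm}=c_{\mathcal M}$, so all four values coincide. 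The common value is attained on $\mathcal M$ because the mountain pass solution from Theorem~\ref{th1-1}(i) is a solution of \eqref{1-5}, hence lies in $\mathcal M$ by the Pohozaev identity of Lemma~\ref{lem2-4}, with energy $c_{mp}=c_{\mathcal M}$.

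Finally, let $u\in\mathcal M$ satisfy $I(u)=c_{\mathcal M}$. Differentiating $J(u_t)=t\,g_u'(t)$ and using $g_u'(1)=0$ gives $\langle J'(u),\partial_t u_t|_{t=1}\rangle=g_u''(1)<0$, so $J'(u)\ne0$ and $\mathcal M$ is a $C^1$-manifold near $u$; the Lagrange multiplier rule yields $\lambda$ with $I'(u)=\lambda J'(u)$. Pairing with $\partial_t u_t|_{t=1}$ and using $\langle I'(u),\partial_t u_t|_{t=1}\rangle=g_u'(1)=0$ forces $0=\lambda\,g_u''(1)$, whence $\lambda=0$ and $I'(u)=0$, so $u$ is a ground state solution. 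For the sign property, I observe that $I$ and $J$ depend on $u$ only through $|\nabla u|^2$, $u^2$ and $|u|^p$, so $I(|u|)=I(u)=c_{\mathcal M}$ and $J(|u|)=J(u)=0$; thus $|u|$ is also a minimiser on $\mathcal M$, hence a nonnegative solution. Applying the strong maximum principle to $-\Delta|u|+c(x)|u|=0$ with $c(x)=1+(\log*|u|^2)-|u|^{p-2}$ locally bounded (by the regularity in Remark~\ref{sec:regularity}) gives $|u|>0$ on $\R^2$, so the continuous function $u$ never vanishes on the connected plane and therefore has constant sign.
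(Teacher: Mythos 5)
Most of your architecture runs parallel to the paper's: the fiber map $g_u(t)=I(u_t)$, the identity $J(u_t)=t\,g_u'(t)$, the uniqueness of the critical point of $g_u$ for $p\ge 3$ (your explicit computation of $t\,g_u''(t)$ at critical points is correct and in fact supplies the elementary proof of the paper's Lemma~\ref{lem4-1}, which the paper omits), the chain $c_{\mathcal M}\le c_g\le c_{mp}\le c_{mm}=c_{\mathcal M}$, and the sign argument via $I(|u|)=I(u)$, $J(|u|)=J(u)$ plus the strong maximum principle. All of that is sound.

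The genuine gap is in your proof that a minimizer $u$ of $I$ on $\mathcal M$ is a critical point of $I$. You pair $J'(u)$ and $I'(u)$ with the ``tangent vector'' $\partial_t u_t\big|_{t=1}=2u+x\cdot\nabla u$ and identify these pairings with $g_u''(1)$ and $g_u'(1)$. But for a general $u\in X$ the function $x\cdot\nabla u$ need not lie in $X$ --- indeed it need not even lie in $L^2(\R^2)$, since $|x\cdot\nabla u|\le |x||\nabla u|$ and $\nabla u$ is merely in $L^2$ --- so the dual pairings $\langle J'(u),\partial_t u_t|_{t=1}\rangle$ and $\langle I'(u),\partial_t u_t|_{t=1}\rangle$ are not defined, and the chain rule in $X$ along the dilation fiber is not available. (The scalar identities $\frac{d}{dt}I(u_t)=g_u'(t)$ and $\frac{d}{dt}J(u_t)$ are fine, because they come from the explicit formulas; what fails is converting them into statements about $I'(u)$, $J'(u)$ as elements of $X'$.) Consequently you have justified neither $J'(u)\neq 0$ (needed to invoke the Lagrange multiplier rule at all) nor the conclusion $\lambda=0$. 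You cannot appeal to regularity of $u$ to fix this, since regularity is only known \emph{after} one knows $u$ is a critical point --- the argument would be circular. This is precisely why the paper avoids Lagrange multipliers: in Lemma~\ref{lem4-2}(iv) it argues directly, assuming $I'(u)v<0$ for some $v\in X$, choosing $\tau$ small, projecting $u+\tau v$ back onto $\mathcal M$ via the continuous map $w\mapsto t_w$ (Lemma~\ref{lem4-2}(iii)), and obtaining a competitor $Q(t^\tau,u+\tau v)\in\mathcal M$ with $I\bigl(Q(t^\tau,u+\tau v)\bigr)<c_{\mathcal M}$, a contradiction. This deformation-type step uses only the $C^1$ property of $I$ and the continuity of $u\mapsto t_u$, never a derivative along the dilation fiber; replacing your Lagrange multiplier paragraph with it closes the gap, and the rest of your proof then goes through.
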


For equation \eqref{1-4} in case $d \ge 3$ with $m>0$, a result corresponding to Theorem~\ref{th1-1} is proved in
\cite[Theorem 3.4]{Wangjun2012}. In this case, the convolution term of the energy functional in \eqref{def-functional} is negative definite, which is of key importance in the proof in \cite{Wangjun2012}. As remarked above, the convolution term in \eqref{def-functional-1} does not have a definite sign. In the planar case where $p \ge 4$, the existence of a ground state solution has been proved in \cite[Theorem 1.1]{Cingolani-Weth-2016}, where also the existence of infinitely many pairs of solutions is established under more general assumptions. As noted before, the proof in \cite{Cingolani-Weth-2016} does not carry over to the case $2<p<4$ due to the possible lack of boundedness of Cerami sequences.

It is instructive to relate the exponent $p$ to the presence of saddle point structures of the functional $I$. In the case $p \ge 4$ studied in \cite{Cingolani-Weth-2016}, $I$ has a rather simple saddle point structure with regard to the fibres $\{t u\::\: t>0\} \subset X$, $u \in X \setminus \{0\}$, see \cite[Lemma 2.5]{Cingolani-Weth-2016}. In particular, this implies that the ground state energy coincides with the minimum of $I$ on the associated Nehari manifold and obeys the simple minimax
characterization $c_g= \inf \limits_{u \in X \setminus \{0\}} \sup \limits_{t > 0} I(tu)$, see \cite[Theorem 1.1]{Cingolani-Weth-2016}.
In case $2<p<4$ this property is lost, but for $p \ge 3$ we recover a different saddle point structure with respect to the fibres $\{u_t \::\: t>0\} \subset X$, $u \in X \setminus \{0\}$, see Lemma~\ref{lem4-2} below. This new saddle point structure provides the basis for the proof of Theorem~\ref{th1-1-1}. In the case $p \in (2,3)$ we could not find any similar saddle point structure of $I$, and we believe that the more complicated geometry of $I$ in this case is related to particular difficulties regarding the boundedness of Cerami sequences.

Our proof of Theorem \ref{th1-1} uses some preliminary tools from \cite{Cingolani-Weth-2016} and is also inspired by \cite{Jeanjean-1997,Hirata-2010,Moroz-VanSchaftingen-2015}. First we construct a Cerami sequence $(u_{n})_n$
at the mountain-pass level $c_{mp}$ with the extra property
that $J(u_{n})\rightarrow 0$ as $n\rightarrow\infty$.
From this extra information, we then deduce the
boundedness of $(u_{n})_n$ in $H^{1}(\mathbb{R}^{2})$. In the case $p \ge 3$ this step is not difficult, whereas for $2<p<3$ the argument is more subtle. Once this step is taken, we can follow arguments
in \cite{Cingolani-Weth-2016} to pass to a subsequence which -- after suitable translation -- converges to a nontrivial solution $\tilde u$
of \eqref{1-5} with $I(\tilde u)=c_{mp}$. Hence the set $\mathcal K$ of nontrivial solutions of (\ref{1-5}) is nonempty. We then consider a sequence $(u_n)_n$ in $\mathcal K$ with $I(u_n) \to c_g$ as $n \to \infty$, and in the same way as before we may then pass to a subsequence which -- after suitable translation -- converges to a nontrivial solution $u$
of \eqref{1-5} with $I(u)=c_g$.

Our third main result is concerned with a symmetric setting with respect to
suitably defined actions of subgroups of the orthogonal group $O(2)$, and it will give rise to the existence of infinitely many
nonradial sign-changing solutions of \eqref{1-5}. We need to introduce some notation.
Let $G$ be a  closed subgroup of the orthogonal group $O(2)$.
Moreover, let $\tau: G\rightarrow \{-1, 1\}$ be a group homomorphism.
Then the pair $(G, \tau)$ gives rise to a group action of $G$ on $X$
defined by
\begin{equation}\label{1-7}
    [A \ast u] (x):= \tau (A) u \left(A^{-1}x\right)\quad \text{for}
    \ A \in G, \ u \in X\ \text{and}\ x \in \mathbb{R}^{2}.
\end{equation}
The following result is concerned with solutions of \eqref{1-5}
in the invariant space
\begin{equation}
\label{eq:def-X-G}
    X_{G}:=\left\{u \in X\::\: A \ast u = u \ \text{for\ all}
     \ A \in G \right\}.
\end{equation}

\begin{theorem}\label{th1-2}
Suppose that $p>2$. Let $G, \tau$ be as above, assume that
$X_{G} \not = \{0\}$, and let
\begin{equation}
  \label{eq:def-c_gmp}
    c_{mp,G}=\inf_{\gamma \in \Gamma_G}\max_{t \in [0,1]}I(\gamma(t))\quad
    \text{with}\quad \Gamma_G =\left\{\gamma\in C\left([0,1], \:X_G\right)
    \ | \ \gamma (0)=0, \ I(\gamma(1))<0\right\}.
\end{equation}
\begin{enumerate}
\item[\rm(i)] We have $c_{mp,G}>0$, and equation \eqref{1-5} has a solution $u \in X_G \setminus \{0\}$ with $I(u)=c_{mp,G}$.
\item[\rm(ii)] Equation \eqref{1-5} has a $G$-invariant ground state solution $u \in X_G \setminus \{0\}$, i.e., the function $u \in X_{G}$ satisfies $\eqref{1-5}$ and
\begin{equation*}
    I(u)=\inf\left\{I(v)\::\:v\in X_{G}\setminus\{0\}\ \text{is\ a\ solution\ of}
     \ \eqref{1-5}\right\}.
\end{equation*}
\end{enumerate}
\end{theorem}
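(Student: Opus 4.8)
The plan is to deduce Theorem~\ref{th1-2} from an equivariant version of the argument used for Theorem~\ref{th1-1}, with the \emph{principle of symmetric criticality} of Palais as the organizing tool. First I would record that the action \eqref{1-7} is linear and isometric on $X$ and leaves $I$ invariant: since every $A \in G$ is orthogonal we have $|A^{-1}x|=|x|$ and $|Ax-Ay|=|x-y|$, so the $H^1$-norm, the weight $\int_{\R^2}\log(1+|x|)u^2$ defining $X$, and each of the four terms in \eqref{def-functional-1} are unchanged when $u$ is replaced by $A\ast u$ (the factor $\tau(A)=\pm1$ disappears upon squaring $u$). The same computation shows that $J$ in \eqref{eq:def-G} is $G$-invariant and that $X_G$ is a closed, hence weakly closed, subspace of $X$. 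By symmetric criticality it then suffices to produce nonzero critical points of the restriction $I|_{X_G}$: any such point is automatically a critical point of $I$ on all of $X$, i.e.\ a solution of \eqref{1-5}.

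Working inside $X_G$, I would rerun the proof of Theorem~\ref{th1-1}. Since $X_G \neq \{0\}$, the functional $I|_{X_G}$ inherits the mountain pass geometry, so $\Gamma_G \neq \emptyset$ and $c_{mp,G}>0$ follows exactly as for $c_{mp}$. Next I would construct, by the monotonicity trick of \cite{Jeanjean-1997} adapted to the subspace $X_G$, a Cerami sequence $(u_n)_n \subset X_G$ for $I|_{X_G}$ at the level $c_{mp,G}$ with the extra property $J(u_n)\to 0$; because $J$ and $I$ both restrict to $X_G$, this construction and the ensuing $H^1(\R^2)$-boundedness of $(u_n)_n$ are identical to the non-symmetric case (easy for $p\ge3$, more delicate for $2<p<3$).

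The one genuinely new point, and the step I expect to be the main obstacle, is compactness: I must upgrade $(u_n)_n$ to a sequence converging, after an admissible translation, to a \emph{nonzero and $G$-invariant} limit. Write $F:=\Fix(G)=\{x\in\R^2 : Ax=x \text{ for all } A\in G\}$ and let $F^\perp$ be its orthogonal complement. The key observation is that the translation $u\mapsto u(\cdot-a)$ commutes with the action \eqref{1-7} precisely when $a\in F$, so translating along $F$ preserves $X_G$, whereas any escape of mass with a nonvanishing $F^\perp$-component is forbidden by the symmetry. Indeed, if $\int_{B_1(y_n)}u_n^2\ge\delta>0$ with $y_n=f_n+g_n$, $f_n\in F$, $g_n\in F^\perp$, then $G$-invariance of $u_n$ places equal bumps at every point of the orbit $Gy_n$, whose diameter is at least $\kappa|g_n|$ for some $\kappa>0$ (no nonzero vector of $F^\perp$ is $G$-fixed, so $\max_{A\in G}|Av-v|$ is bounded away from $0$ on the unit sphere of $F^\perp$, using compactness of $G\subset O(2)$). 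Should $|g_n|\to\infty$, these bumps would be mutually separated by distances tending to infinity, and here the planar structure works in our favour: the long-range positivity of the logarithmic kernel forces the cross-interactions in $\int_{\R^2}\!\int_{\R^2}\log(|x-y|)\,u_n^2(x)u_n^2(y)\,dx\,dy$ to grow like $\log|g_n|\to+\infty$, which is incompatible with $I(u_n)\to c_{mp,G}$ and the $H^1$-bound. Hence $(g_n)_n$ stays bounded, and after a translation by an element of $F$ and passage to a subsequence, $u_n\rightharpoonup u$ weakly in the weakly closed space $X_G$ with $u\neq0$ (by Rellich compactness on the bounded region carrying the surviving bump). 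The compactness lemma of \cite{Cingolani-Weth-2016} then identifies $u$ as a nontrivial $G$-invariant solution at the level $c_{mp,G}$, proving part~(i).

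For part~(ii) I would argue as in Theorem~\ref{th1-1}: the set $\mathcal K_G$ of nonzero $G$-invariant solutions is nonempty by~(i), so I would take a minimizing sequence in $\mathcal K_G$ for $I$ and repeat the same translation-and-compactness analysis inside $X_G$ to obtain a $G$-invariant solution realizing the infimum. The crux throughout is making the ``$\log$-energy penalizes spreading'' heuristic rigorous despite the fact that the nonlocal term is sign-indefinite and $I$ is not coercive on $X$; this is exactly the planar difficulty highlighted in the introduction, and I would control it by splitting the nonlocal functional into its far-field positive part and a bounded near-field part via the kernel estimates already developed in \cite{Cingolani-Weth-2016}.
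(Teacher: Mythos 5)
Your proposal is correct, and its skeleton (symmetric criticality, mountain-pass geometry in $X_G$, a Cerami sequence with $J(u_n)\to 0$ built from the augmented functional restricted to $X_G$, $H^1$-boundedness, compactness up to translations, then part (ii) by minimizing $I$ over the nonzero $G$-invariant solutions) coincides with the paper's; the genuine difference is in how the limit is forced to be $G$-invariant. The paper (Proposition \ref{prop3-5-1}) first applies the non-symmetric Proposition \ref{prop3-5} as a black box, obtaining arbitrary points $\tilde y_n\in\R^2$ with $\tilde y_n \ast u_n\to\tilde u$ strongly in $X$; it then proves $\sup\{|A\tilde y_n-\tilde y_n|\,:\,A\in G,\ n\in\N\}<\infty$ by a soft $L^2$-argument (using $A_n\ast u_n=u_n$ and the fact that unbounded shifts force $\zeta_n\ast v\rightharpoonup 0$ in $L^2$, contradicting $\lim_n\int_{\R^2}(\zeta_n\ast v)\,\tilde u\,dx=|\tilde u|_2^2>0$), and finally replaces $\tilde y_n$ by its Haar average over $G$, which lies in $\Fix(G)$ and differs from $\tilde y_n$ by a bounded vector. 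You instead constrain the translations from the outset: writing the Lions concentration points as $y_n=f_n+g_n$ with $f_n\in \Fix(G)$ and $g_n\perp\Fix(G)$, you use $G$-invariance to replicate the $\delta$-bump along the orbit, whose diameter is at least $\kappa|g_n|$ (correct, since $Ay_n-y_n=Ag_n-g_n$ and $\max_{A\in G}|Av-v|\ge\kappa |v|$ on $\Fix(G)^{\perp}$ by compactness of $G$), and you rule out $|g_n|\to\infty$ because two $\delta$-bumps at mutual distance $R$ give $V_1(u_n)\ge \delta^2\log(1+R-2)\to\infty$, while $V_1(u_n)=V_0(u_n)+V_2(u_n)$ stays bounded along the sequence ($V_0$ by the energy and $H^1$ bounds, $V_2$ by \eqref{2-2}). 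Both mechanisms are sound. The paper's argument buys softness and reusability: it never touches the kernel in this step and reuses Proposition \ref{prop3-5} verbatim, at the price of the Haar-averaging trick. Yours is more geometric and exploits precisely the planar long-range positivity of the logarithmic kernel, but it obliges you to rerun the convergence machinery of Proposition \ref{prop3-5} with translations restricted to $\Fix(G)$ rather than citing it as a black box; this does work, because those estimates only require a translated Cerami sequence with a nonzero weak limit, and translation along $\Fix(G)$ preserves $X_G$. Two details you should make explicit in a write-up: weak convergence ``in $X_G$'' needs the $|\cdot|_*$-bound supplied by Lemma \ref{lem2-1} first, since weak $H^1$-convergence alone does not produce a limit in $X$; and the vanishing alternative is excluded by $c_{mp,G}>0$ (for part (i)) and by \eqref{2-5-1} (for part (ii)), exactly as in the non-symmetric case.
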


We remark that if $\tau$ is nontrivial and $A \in G$ is given with $\tau(A)=-1$, then
every $u \in X_{G}$ satisfies $u(A^{-1}x) =-u(x)$. Consequently, we see
that $u$ vanishes on the set $\{x\in \mathbb{R}^{2}\::\:Ax=x\}$ and changes
sign in $\mathbb{R}^{2}$ if $u \neq 0$.
We also point out that Theorem~\ref{th1-2} has no analogue yet in the higher dimensional case, i.e.
for equation \eqref{1-4} in case $d \ge 3$ with $m>0$. This remains an interesting open
problem in the case where $\tau$ is nontrivial.

We discuss some examples for $G$ and $\tau$.

\begin{example}
\label{exam1-1}{\rm
(i) Suppose that $G =O(2)$ and $\tau \equiv 1$. Then the space $X_{G}$ consists
of radial functions in $X$. In this case, Theorem \ref{th1-2} yields the
existence of radial ground state solutions.

(ii) Let $G =\{id,-id, A_{1}, A_{2}\}$, where $A_{i}$ is the reflection
at the coordinate hyperplane $\{x_{i}=0\}$ for $i=1, 2$.
Moreover, let $\tau: G\rightarrow \{-1, 1\}$ be the homomorphism
defined by $\tau(A_{1})=-1$ and $\tau(A_{2}) =1$.
Then $u \in X_{G}$ if and only if
\begin{equation*}
    u(-x_{1}, x_{2})=-u(x_{1}, x_{2}) \quad \text{and} \quad
    u(x_{1}, -x_{2})= u(x_{1}, x_{2})\quad \text{for\ all}\
    x = (x_{1}, x_{2}) \in \mathbb{R}^{2}.
\end{equation*}
Therefore, Theorem \ref{th1-2} yields a $G$-invariant ground state solution
of \eqref{1-5} such that this solution is odd with respect to the hyperplane
$\{x_{1}=0\}$ and even with respect to the hyperplane $\{x_{2}=0\}$.
We point out that this existence result has {\em no} analogue for the seemingly similar nonlinear Schr\"{o}dinger equation
\begin{equation}
  - \Delta u + u=  |u|^{p-2} u, \qquad u \in H^1(\R^2).
\label{eq:nonlinear-schroedinger}
\end{equation}
Indeed, by \cite{esteban-lions82}, (\ref{eq:nonlinear-schroedinger}) does not admit nontrivial solutions which vanish on a hyperplane.

(iii) We assume that, for given $k\in \mathbb{N}$, the subgroup $G$ of $O(2)$
of order $2k$ generated by the (counter-clockwise) $\frac{\pi}{k}$-rotation
\begin{equation*}
    A\in O(2), \quad A x=\left(x_{1}\cos \frac{\pi}{k}-x_{2}
    \sin \frac{\pi}{k},\ x_{1}\sin\frac{\pi}{k}+ x_{2}
    \cos\frac{\pi}{k}\right) \quad \text{for}\
    x=(x_{1}, x_{2}) \in \mathbb{R}^{2}.
\end{equation*}
Let $\tau: G\rightarrow \{-1, 1\}$ be the homomorphism defined by
\begin{equation*}
    \tau(A^{j}) =(-1)^{j} \quad \text{for}\ j=1,\cdot\cdot\cdot,2k,
\end{equation*}
where $A^{j}$ is the $\frac{j\pi}{k}$-rotation. Then Theorem \ref{th1-2}
applies and yields a $G$-invariant  ground state solution.
Note that any such solution is sign-changing and nonradial.
}
\end{example}

From Theorem \ref{th1-2}, applied with group actions of the form given in Example \ref{exam1-1}(iii), we will deduce
the following corollary.

\begin{corollary}\label{coro1-3}
Suppose that $p>2$. Then \eqref{1-5} admits an unbounded sequence $(u_n)_n$ of nonradial sign-changing solutions with $I(u_n) \to \infty$ as $n \to \infty$.
\end{corollary}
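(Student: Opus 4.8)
The plan is to construct, for each $k \in \N$, a suitable subgroup $G_k$ of $O(2)$ together with a nontrivial homomorphism $\tau_k$, apply Theorem~\ref{th1-2} to obtain a $G_k$-invariant ground state solution $u_k$, and then show that the energies $I(u_k)$ tend to infinity along a subsequence. Concretely, I would take $G_k$ to be the order-$2k$ group generated by the $\frac{\pi}{k}$-rotation as in Example~\ref{exam1-1}(iii), with $\tau_k(A^j)=(-1)^j$. By the remark following Theorem~\ref{th1-2} (since $\tau_k$ is nontrivial), each resulting solution $u_k \in X_{G_k}\setminus\{0\}$ is sign-changing, and as noted in Example~\ref{exam1-1}(iii) it is automatically nonradial. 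So the only real content is to verify that the energies are unbounded and hence that the sequence $(u_k)_k$ is unbounded in $X$.

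The heart of the matter is the energy lower bound. I would argue that the symmetry constraint forces the solution to ``spread out'' into at least $2k$ distinct angular sectors, and that the energy grows at least linearly in $k$. The mechanism is that a nonzero $\tau_k$-equivariant function must vanish on $2k$ rays through the origin and alternate sign between the $2k$ angular sectors they bound. I expect this to imply that $I(u_k) \ge c_k$ for some explicit lower bound $c_k$ diverging with $k$. The cleanest way to extract this is to bound $c_{mp,G_k}$ from below: for any path $\gamma \in \Gamma_{G_k}$, the passage over the mountain pass must ``pay'' for concentration in each of the $2k$ sectors, and a suitable rescaling or subadditivity argument comparing with the mountain pass value of the full problem should yield $c_{mp,G_k} \to \infty$. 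Since $I(u_k)=c_{mp,G_k}$ by Theorem~\ref{th1-2}(i), this gives $I(u_k)\to\infty$.

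The main obstacle I anticipate is making the energy blow-up rigorous, because the logarithmic convolution term does not have a definite sign and couples the different angular sectors in a nontrivial way. Unlike in standard local problems, one cannot simply say that the energy of a function with $2k$ sign-alternating pieces is at least $2k$ times the one-piece energy, since the nonlocal term introduces cross-interactions between the sectors. To handle this I would try to exploit the equivariance to show the cross terms are controlled: the gradient and $L^2$ contributions are genuinely additive over a fundamental sector and scale like $k$ times a fixed positive quantity (after accounting for the shrinking support of each piece), while the nonlocal and $L^p$ terms must be shown not to cancel this growth. If a direct comparison proves delicate, an alternative is to argue by contradiction: if $I(u_k)$ stayed bounded, then after the translation/convergence procedure from the proof of Theorem~\ref{th1-1} one would extract a nontrivial limit profile shared by infinitely many $k$, contradicting the increasingly stringent angular symmetry (in particular the vanishing on more and more rays).
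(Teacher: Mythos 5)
Your primary route---proving a quantitative lower bound $c_{mp,G_k}\to\infty$ by making each of the $2k$ angular sectors ``pay'' energy---is not carried out, and the obstacle you yourself name (the sign-indefinite logarithmic term couples the sectors, so no superadditivity over sectors is available) is precisely why no such estimate appears in the paper; as written this route is a hope, not a proof. Your fallback, contradiction via compactness, is indeed the paper's actual strategy, but your sketch of it has two concrete gaps. First, the choice of groups: the paper does \emph{not} use the full family of $\frac{\pi}{k}$-rotation groups, but the nested chain $G_n$ of order $2\cdot 3^n$ generated by the $\frac{\pi}{3^n}$-rotations. Nesting $X_{G_{n+1}}\subset X_{G_n}$ is what lets one treat the tail of the solution sequence as a sequence in a \emph{fixed} symmetric space $X_{G_n}$ and conclude that the limit lies in $\bigcap_n X_{G_n}$; and nesting forces odd index ratios because of the sign homomorphism: if $G_k\subset G_{2k}$, the generator $A_k=A_{2k}^2$ satisfies $\tau_{2k}(A_k)=+1\neq -1=\tau_k(A_k)$, so $X_{G_{2k}}\not\subset X_{G_k}$. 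With your family indexed by all $k$ there is no common space in which to extract a limit ``shared by infinitely many $k$,'' so that step breaks.

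Second, and more importantly, the translations. The convergence procedure of Theorem~\ref{th1-1} (Proposition~\ref{prop3-5}) only yields points $y_k\in\R^2$, a priori unbounded, with $y_k\ast u_k\to u$. Translation destroys equivariance about the origin: $y_k\ast u_k$ vanishes on rays through the point $y_k$, and if $|y_k|\to\infty$ these nodal sets escape to infinity, so the strong limit $u$ need not vanish anywhere nor carry any rotational symmetry---no contradiction follows. Closing exactly this gap is the content of the paper's Proposition~\ref{prop3-5-1}: by a Haar-measure averaging argument it shows the translations may be taken in $\Fix(G_n)=\{0\}$, i.e.\ there are no translations at all, so $u_k\to u$ strongly with $u\in\bigcap_n X_{G_n}$ a nonzero $C^2$ critical point; then $u(x)=-u(A_nx)$ for all $n$, and letting $n\to\infty$ (so that $A_nx\to x$) gives $u\equiv 0$, the desired contradiction. (One must also rule out alternative (I) of that proposition, which the paper does via the uniform bound $c_{mp,G}\ge m_\rho>0$ from \eqref{def-mp-G}.) Your sketch addresses neither the $\Fix(G)$-translation issue nor the nesting requirement, and these are the two essential ingredients of the proof.
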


This paper is organized as follows. In Section 2, we set up the variational
framework for \eqref{1-5} and present some preliminary results.
In Section 3, we give the proof Theorem \ref{th1-1}.
Section  4 is devoted to the proof of Theorem \ref{th1-1-1}.
Finally, in Section 5 we will complete the proofs of Theorem \ref{th1-2} and Corollary \ref{coro1-3}.

Throughout the paper,  we make use of the following notation. $L^{s}(\mathbb{R}^{2})$, $1\leq s \leq \infty$ denotes the usual Lebesgue space with the norm $|\cdot|_{s}$. For any $\rho>0$ and for any $z\in\mathbb{R}^{2}$, $B_{\rho}(z)$ denotes the ball of radius $\rho$ centered at $z$. As usual, $X'$ denotes the dual space of $X$. Finally,  $C$, $C_{1}$, $C_{2}, \cdot\cdot\cdot$ denote different positive constants whose exact value is inessential.

\section{Preliminaries}

\indent

In the section, we recall the variational setting for \eqref{1-5}
as elaborated by \cite{Cingolani-Weth-2016} and establish some
useful preliminary results. Let $H^{1}(\mathbb{R}^{2})$ be the usual Sobolev space endowed with
the standard inner product
\begin{equation*}
    \langle u, v\rangle =\int_{\mathbb{R}^{2}} \left(\nabla u \nabla v
     + uv\right)dx   \qquad \text{for}\ u, v \in H^{1}(\mathbb{R}^{2}),
\end{equation*}
and the induced norm denoted by $\|u\|=\langle u, u\rangle^{1/2}$.
We define the symmetric bilinear forms
\begin{align*}
    (u,v) \mapsto B_{1}(u,v)&= \int_{\mathbb{R}^{2}}\int_{\mathbb{R}^{2}}
     \log \left(1+|x-y|\right)u(x)v(y)dxdy,\\
    (u,v) \mapsto B_{2}(u,v)&= \int_{\mathbb{R}^{2}}\int_{\mathbb{R}^{2}}
     \log \left(1+\frac{1}{|x-y|}\right)u(x)v(y)dxdy,\\
    (u,v) \mapsto B_{0}(u,v)&= B_{1}(u,v)-B_{2}(u,v) = \int_{\mathbb{R}^{2}}
     \int_{\mathbb{R}^{2}}\log \left(|x-y|\right)u(x)v(y)dxdy.
\end{align*}
Here, in each case, the definition is restricted to measurable
functions $u,v: \mathbb{R}^{2}\rightarrow \mathbb{R}$ such that
the corresponding double integral is well-defined in the Lebesgue
sense. We remark that, since $0< \log (1+r)<r$ for $r>0$, from
the Hardy-Littlewood-Sobolev inequality \cite{Lieb-1983}
we deduce that
\begin{equation}\label{2-1}
    \left|B_{2}(u,v)\right| \leq \int_{\mathbb{R}^{2}}
    \int_{\mathbb{R}^{2}}\frac{1}{|x-y|}\left|u(x)v(y)\right|dxdy
    \leq C_{0}|u|_{\frac{4}{3}}|v|_{\frac{4}{3}}  \qquad
    \text{for}\ u,v \in L^{\frac{4}{3}}(\mathbb{R}^{2})
\end{equation}
with a constant $C_{0}>0$. Then we define the functionals
\begin{align*}
   & V_{1}: H^{1}(\mathbb{R}^{2})\rightarrow [0,\infty],
     &V_{1}(u)& =B_{1}(u^{2},u^{2})=
     \int_{\mathbb{R}^{2}}\int_{\mathbb{R}^{2}}
     \log \left(1+|x-y|\right)u^{2}(x)u^{2}(y)dxdy,\\
   & V_{2}: L^{\frac{8}{3}}(\mathbb{R}^{2})\rightarrow [0,\infty),
     &V_{2}(u)&=B_{2}(u^{2},u^{2})=\int_{\mathbb{R}^{2}}
     \int_{\mathbb{R}^{2}}\log \left(1+\frac{1}{|x-y|}\right)
     u^{2}(x)u^{2}(y)dxdy,\\
   & V_{0}: H^{1}(\mathbb{R}^{2})\rightarrow \mathbb{R} \cup \{\infty\},
     &V_{0}(u)&=B_{0}(u^{2},u^{2})=\int_{\mathbb{R}^{2}}
     \int_{\mathbb{R}^{2}}\log \left(|x-y|\right)u^{2}(x)u^{2}(y)dxdy.
\end{align*}
Note that, by \eqref{2-1} we have
\begin{equation}\label{2-2}
    \left|V_{2}(u)\right| \leq C_{0} |u|_{\frac{8}{3}}^{4} \qquad
    \text{for\ all}\ u \in L^{\frac{8}{3}}(\mathbb{R}^{2}),
\end{equation}
thus $V_{2}$ only takes finite values on $L^{\frac{8}{3}}(\mathbb{R}^{2})
\subset H^{1}(\mathbb{R}^{2})$. Next, for any measurable function
$u: \mathbb{R}^{2}\rightarrow \mathbb{R}$, we define
\begin{equation*}
    |u|_{*} := \left( \int_{\mathbb{R}^{2}} \log\left(1+|x|\right)u^{2}
     dx\right)^{1/2}\in [0, \infty].
\end{equation*}
Since
\begin{equation*}
    \log\left(1+|x-y|\right) \leq \log\left(1+|x|+|y|\right)
    \leq \log\left(1+|x|\right) + \log\left(1+|y|\right)
    \qquad \text{for}\ x, y \in \mathbb{R}^{2},
\end{equation*}
we have the estimate
\begin{align}\label{2-3}
    B_{1}(uv, wz) &\leq \int_{\mathbb{R}^{2}}\int_{\mathbb{R}^{2}}
    \left[\log\left(1+|x|\right)+\log\left(1+|y|\right)\right]
    \left|u(x)v(x)\right|\left|w(y)z(y)\right|dxdy \notag\\
    &\leq |u|_{*}|v|_{*}|w|_{2}|z|_{2}+|u|_{2}|v|_{2}|w|_{*}|z|_{*}
    \qquad \text{for}\ u,v,w,z \in L^{2}(\mathbb{R}^{2})
\end{align}
with the conventions $\infty \cdot 0 = 0$ and $\infty \cdot s =\infty$
for $s>0$. Some useful properties of $B_1$ are contained in the following lemmas from \cite{Cingolani-Weth-2016}.

\begin{lemma}[\hspace{-1ex} {\cite[Lemma 2.1]{Cingolani-Weth-2016}}]
\label{lem2-1}
Suppose that $\{u_{n}\}$ is a sequence in $L^{2}(\mathbb{R}^{2})$
such that $u_{n}\rightarrow u\in L^{2}(\mathbb{R}^{2})\backslash\{0\}$
pointwise a.e. in $\mathbb{R}^{2}$. Moreover, let $\{v_{n}\}$ be
a bounded sequence in $L^{2}(\mathbb{R}^{2})$ such that
\begin{equation*}
    \sup_{n \in \mathbb{N}} B_{1}(u_{n}^{2}, v_{n}^{2}) < \infty.
\end{equation*}
\vskip -0.1 true cm\noindent
Then, there exist $n_{0} \in \mathbb{N}$ and $C>0$ such that
$|v_{n}|_{\ast}<C$ for $n \geq n_{0}$.
If moreover $B_{1}(u_{n}^{2}, v_{n}^{2}) \rightarrow 0$ and $|v_{n}|_{2} \rightarrow 0$ as  $n \rightarrow \infty$, then $|v_{n}|_{\ast} \rightarrow 0$ as $n \rightarrow \infty$.
\end{lemma}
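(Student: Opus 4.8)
The plan is to use the nonvanishing of the a.e.\ limit $u$ to localize a fixed portion of the mass of the $u_n$ inside a ball, and then to observe that the weight $\log(1+|x-y|)$ appearing in $B_1(u_n^2,v_n^2)$ dominates $\log(1+|y|)$ up to a harmless constant once $x$ ranges over this ball and $|y|$ is large. This lets one bound the ``far'' part of $|v_n|_{*}^2$ by $B_1(u_n^2,v_n^2)$ and the ``near'' part by $|v_n|_2^2$.

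First I would produce the localizing set. Since $u \in L^2(\R^2)\setminus\{0\}$, there is $\eta>0$ with $|\{|u|>\eta\}|>0$, and after intersecting with a large ball I may fix a set $\Omega_0\subset \{|u|>\eta\}\cap B_\rho(0)$ of positive finite measure. By Egorov's theorem $u_n\to u$ uniformly on some $\Omega\subset\Omega_0$ with $m:=|\Omega|>0$, so there is $n_0$ with $|u_n(x)|\ge \eta/2$ for all $x\in\Omega$ and all $n\ge n_0$. Enlarging $\rho$ if necessary, I also arrange $2\rho\ge 3$. The key pointwise estimate is then that for $x\in\Omega\subset B_\rho(0)$ and $|y|\ge 2\rho$ one has $|x-y|\ge |y|-\rho\ge |y|/2$, hence
\[
\log(1+|x-y|)\ge \log\Bigl(1+\tfrac{|y|}{2}\Bigr)\ge \log(1+|y|)-\log 2\ge \tfrac12\log(1+|y|),
\]
where the last step uses $|y|\ge 2\rho\ge 3$.

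Dropping the nonnegative integrand off $\Omega\times\{|y|\ge 2\rho\}$ and inserting $u_n^2(x)\ge \eta^2/4$ on $\Omega$, I obtain for $n\ge n_0$, with $C_1:=\eta^2 m/8>0$,
\[
B_1(u_n^2,v_n^2)\ \ge\ C_1\int_{\{|y|\ge 2\rho\}}\log(1+|y|)\,v_n^2(y)\,dy .
\]
Splitting $|v_n|_{*}^2=\int_{\R^2}\log(1+|y|)\,v_n^2\,dy$ into the regions $\{|y|<2\rho\}$ and $\{|y|\ge 2\rho\}$, I bound the inner part by $\log(1+2\rho)\,|v_n|_2^2$ (monotonicity of $\log$), which stays bounded because $\{v_n\}$ is bounded in $L^2$, and the outer part by $C_1^{-1}B_1(u_n^2,v_n^2)\le C_1^{-1}\sup_n B_1(u_n^2,v_n^2)<\infty$. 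Adding the two pieces yields a uniform bound $|v_n|_{*}<C$ for $n\ge n_0$, which is the first assertion. For the second assertion the same decomposition applies verbatim: the outer term tends to $0$ since $B_1(u_n^2,v_n^2)\to 0$, and the inner term tends to $0$ since $|v_n|_2\to 0$; hence $|v_n|_{*}\to 0$.

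The hard part is really the opening step: extracting a single set $\Omega$, independent of $n$, on which the $u_n$ are uniformly bounded below for all large $n$. This is exactly what converts the two-variable kernel estimate into a one-variable weighted bound on $v_n^2$, and it is where both hypotheses $u\neq 0$ and a.e.\ convergence are essential; once $\Omega$ is fixed, the remaining estimates are elementary comparisons of logarithms together with the $L^2$-boundedness of $\{v_n\}$.
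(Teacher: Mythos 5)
Your proof is correct, and it follows essentially the same route as the proof of \cite[Lemma 2.1]{Cingolani-Weth-2016}, which the paper simply cites rather than reproves: one localizes via Egorov's theorem on a bounded set of positive measure where $u_n^2$ is uniformly bounded below for large $n$, and then compares the kernel $\log(1+|x-y|)$ with the weight $\log(1+|y|)$ to get $|v_n|_*^2 \le C\bigl(B_1(u_n^2,v_n^2) + |v_n|_2^2\bigr)$. The only cosmetic difference is that the original argument uses the global subadditivity bound $\log(1+|y|)\le \log(1+|x|)+\log(1+|x-y|)$ in place of your near/far splitting of the $y$-region.
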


\begin{lemma}[\hspace{-1ex} {\cite[Lemma 2.6]{Cingolani-Weth-2016}}]\label{lem2-7}
Let $\{u_{n}\}$, $\{v_{n}\}$ and $\{w_{n}\}$ be bounded sequences
in $X$ such that $u_{n} \rightharpoonup u$ weakly in $X$. Then, for every
$z \in X$, we have $B_{1}(v_{n}w_{n}, z(u_{n}-u)) \rightarrow 0$ as
$n\rightarrow\infty$.
\end{lemma}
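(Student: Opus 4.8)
The plan is to exploit the decomposition $g_n := u_n - u \rightharpoonup 0$ weakly in $X$ and to reduce the double integral to an iterated one. Writing out the definition,
\[
B_{1}(v_n w_n, z g_n) = \int_{\R^2}\int_{\R^2} \log(1+|x-y|)\, v_n(x) w_n(x)\, z(y) g_n(y)\, dx\, dy,
\]
I would first note that the integrand is absolutely integrable: applying the estimate \eqref{2-3} to $|v_n|,|w_n|,|z|,|g_n| \in L^2(\R^2)$ gives $B_1(|v_n w_n|, |z g_n|) \le |v_n|_* |w_n|_* |z|_2 |g_n|_2 + |v_n|_2 |w_n|_2 |z|_* |g_n|_* < \infty$. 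Hence Fubini applies, and setting $h_n(y) := \int_{\R^2} \log(1+|x-y|) v_n(x) w_n(x)\, dx$ we obtain $B_1(v_n w_n, z g_n) = \int_{\R^2} h_n(y)\, z(y)\, g_n(y)\, dy$. The point of this rewriting is that the difficulty now sits in a single $y$-integral against $z g_n$, which can be localized.

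The crucial step is a uniform pointwise bound on $h_n$. Using $0 \le \log(1+|x-y|) \le \log(1+|x|) + \log(1+|y|)$ together with the Cauchy--Schwarz inequality (with weight $\log(1+|x|)$ in the first term),
\[
|h_n(y)| \le \int_{\R^2} \log(1+|x|)\,|v_n w_n|\, dx + \log(1+|y|)\int_{\R^2} |v_n w_n|\, dx \le |v_n|_* |w_n|_* + \log(1+|y|)\, |v_n|_2 |w_n|_2.
\]
Since $\{v_n\}$ and $\{w_n\}$ are bounded in $X$, the quantities $|v_n|_*, |w_n|_*, |v_n|_2, |w_n|_2$ are uniformly bounded, so $|h_n(y)| \le C\bigl(1+\log(1+|y|)\bigr)$ with $C$ independent of $n$.

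It then remains to show $\int_{\R^2} h_n z g_n\, dy \to 0$, and I would split it as $\int_{B_R} + \int_{B_R^c}$. For the tail, choosing $R$ so large that $\log(1+R) \ge 1$ yields $1 + \log(1+|y|) \le 2\log(1+|y|)$ on $B_R^c$; then, by Cauchy--Schwarz in the weighted $L^2$ inner product,
\[
\int_{B_R^c} |h_n z g_n|\, dy \le 2C \Bigl(\int_{B_R^c} \log(1+|y|)\, z^2\, dy\Bigr)^{1/2} |g_n|_* \le 2C C' \Bigl(\int_{B_R^c} \log(1+|y|)\, z^2\, dy\Bigr)^{1/2},
\]
where $C' = \sup_n |g_n|_* < \infty$. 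Because $z \in X$, the tail $\int_{B_R^c} \log(1+|y|)\, z^2\, dy \to 0$ as $R \to \infty$, so this contribution is small uniformly in $n$ for large $R$. For the fixed bounded region $B_R$, I use $|h_n| \le C(1+\log(1+R))$ together with $\int_{B_R} |z g_n| \le |z|_2\, |g_n|_{L^2(B_R)}$; since $u_n \rightharpoonup u$ in $X$ implies weak convergence in $H^1(\R^2)$ and hence, by Rellich--Kondrachov, $g_n \to 0$ strongly in $L^2(B_R)$, this term tends to $0$ as $n \to \infty$. Combining the two estimates bounds $\limsup_n |B_1(v_n w_n, z g_n)|$ by any prescribed $\eps > 0$, proving the claim.

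The main obstacle -- and the reason the global bound \eqref{2-3} alone does not suffice -- is that weak convergence $g_n \rightharpoonup 0$ provides neither $|g_n|_2 \to 0$ nor $|g_n|_* \to 0$. The entire argument hinges on the local/global split: compactness (strong $L^2_{loc}$ convergence) disposes of the bounded region, while the logarithmic weight $\log(1+|y|)$ paired with the membership $z \in X$ controls the tail uniformly in $n$.
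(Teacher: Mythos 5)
Your proof is correct and follows essentially the same route as the source the paper cites for this lemma (\cite[Lemma 2.6]{Cingolani-Weth-2016}): split the kernel via $\log(1+|x-y|)\le\log(1+|x|)+\log(1+|y|)$, handle a fixed ball $B_R$ by compactness, and control the exterior region by the smallness of $\int_{\R^2\setminus B_R}\log(1+|y|)z^2\,dy$ (available since $z\in X$) together with the uniform bound on $|u_n-u|_*$. The only minor variant is that on $B_R$ you use weak $H^1(\R^2)$ convergence plus local Rellich--Kondrachov, whereas one can more directly invoke the global compact embedding $X\hookrightarrow L^2(\R^2)$ of Lemma~\ref{lem2-2}(i) to get $|u_n-u|_2\to 0$; both steps are valid.
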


In the following, we fix $p > 2$ and  consider the functional $I\::\: H^{1}
(\mathbb{R}^{2}) \rightarrow \mathbb{R} \cup \{\infty\}$ defined by
\begin{equation*}
    I(u) = \frac{1}{2}\|u\|^{2} + \frac{1}{4} V_{0}(u)
     -\frac{1}{p}\int_{\mathbb{R}^{2}}|u|^{p}dx.
\end{equation*}
\vskip -0.1true cm \noindent
We also define the Hilbert space
\begin{equation}
\label{definition-X}
    X:=\left\{u \in H^{1}(\mathbb{R}^{2})\::\: |u|_{*}< \infty\right\}
\end{equation}
\vskip -0.1 true cm\noindent
with the norm given by $\|u\|_{X}:= \sqrt{\|u\|^{2}+|u|^{2}_{*}}$.
Note that, from \eqref{2-3} we see that the restriction of $I$ to
$X$ (also denoted by $I$ in the sequel) only takes finite values
in $\mathbb{R}$. We need the following facts proved in \cite{Cingolani-Weth-2016}.

\begin{lemma}[{\cite[Lemma 2.2]{Cingolani-Weth-2016}}]\label{lem2-2}$ $

\noindent {\;\rm(i)} The space $X$ is compactly embedded in $L^{s}(\mathbb{R}^{2})$
     for all $s \in [2,\infty)$.\\
{~\rm(ii)} The functionals $V_{0}$, $V_{1}$, $V_{2}$ and $I$ are of class
     $C^{1}$ on $X$. Moreover,
     \begin{equation*}
        V'_{i}(u)v=4B_{i}(u^{2}, uv)\quad \text{for}\ u, v \in X \ \text{and}\
        i=0,1,2.
     \end{equation*}
{\rm(iii)} $V_{2}$ is continuously differentiable on
     $L^{\frac{8}{3}}(\mathbb{R}^{2})$.\\
{\rm(iv)} $V_{1}$ is weakly lower semicontinuous on
     $H^{1}(\mathbb{R}^{2})$.\\
{$~~$\rm(v)} $I$ is weakly lower semicontinuous on $X$.\\
{\rm(vi)} $I$ is lower semicontinuous on $H^{1}(\mathbb{R}^{2})$.
\end{lemma}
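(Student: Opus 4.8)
The plan is to prove the six assertions using, as the main tools, the Hardy--Littlewood--Sobolev estimates \eqref{2-1}--\eqref{2-2}, the weighted splitting \eqref{2-3}, and the coercivity at infinity provided by the weight $\log(1+|x|)$ entering $|\cdot|_*$. For (i), the key observation is that this weight controls the $L^2$-mass near infinity: if $(u_n)_n$ is bounded in $X$, then for $|x|>R$ one has $u_n^2 \le \frac{1}{\log(1+R)}\log(1+|x|)u_n^2$, whence $\int_{|x|>R}u_n^2\,dx \le \frac{C}{\log(1+R)}$ uniformly in $n$. Passing to a subsequence that converges weakly in $H^1(\R^2)$ and strongly in $L^s_{loc}(\R^2)$ by Rellich--Kondrachov, I would localize outside $B_R$ with a cutoff $\chi_R$ and apply the Gagliardo--Nirenberg inequality $|w|_s \le C|w|_2^{2/s}\|w\|^{1-2/s}$ to make the $L^s$-tails uniformly small as $R\to\infty$; combined with local strong convergence this yields the compact embedding $X \hookrightarrow L^s(\R^2)$ for every $s\in[2,\infty)$.

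For the differentiability statements I would first dispose of (iii): since $u\in L^{8/3}(\R^2)$ gives $u^2\in L^{4/3}(\R^2)$, estimate \eqref{2-1} shows $V_2(u)=B_2(u^2,u^2)$ is finite, and expanding $(u+tv)^2$ together with \eqref{2-2} identifies $V_2'(u)v=4B_2(u^2,uv)$ as a bounded functional depending continuously on $u\in L^{8/3}(\R^2)$. Composing with the embedding $X\hookrightarrow L^{8/3}(\R^2)$ from (i) transfers the $C^1$-property of $V_2$ to $X$. The heart of (ii) is thus the regularity of $V_1$ on $X$: using \eqref{2-3}, the map $v\mapsto 4B_1(u^2,uv)$ is a bounded linear functional on $X$ with norm controlled by $\|u\|_X$, and Gateaux differentiability again follows by expanding the square. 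The continuity of $u\mapsto V_1'(u)$ in the dual norm of $X$ is the delicate point, and here Lemma~\ref{lem2-7} is tailored precisely to control the mixed terms $B_1(u_n^2,u_n v)$ along convergent sequences. Granting this, the $C^1$-property of $I$ follows by combining the result for $V_0=V_1-V_2$ with the trivial quadratic part and the standard $C^1$-dependence of $u\mapsto \frac1p|u|_p^p$, which rests again on the compact embedding $X\hookrightarrow L^p(\R^2)$.

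The semicontinuity statements (iv)--(vi) all hinge on Fatou's lemma applied to the nonnegative integrand of $V_1(u)=\int_{\R^2}\int_{\R^2}\log(1+|x-y|)u^2(x)u^2(y)\,dx\,dy$: along any (sub)sequence converging a.e.\ one gets $V_1(u)\le\liminf_n V_1(u_n)$. For (iv), weak $H^1$-convergence yields a.e.\ convergence by Rellich's theorem, giving the weak lower semicontinuity of $V_1$ on $H^1(\R^2)$. For (v), I would combine the weak lower semicontinuity of $\|\cdot\|^2$ and of $V_1$ (valid on $X$ since $X\hookrightarrow H^1(\R^2)$ is weak-to-weak continuous) with the weak continuity of $V_2$ and of $u\mapsto|u|_p^p$, both consequences of the compact embeddings $X\hookrightarrow L^{8/3}(\R^2)$ and $X\hookrightarrow L^p(\R^2)$ from (i); since lower-semicontinuous plus continuous is lower-semicontinuous, $I$ is weakly l.s.c.\ on $X$. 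Finally, (vi) follows in the same way under strong $H^1$-convergence, where Fatou handles $V_1$ (now possibly $+\infty$) and the remaining terms are continuous via $H^1(\R^2)\hookrightarrow L^q(\R^2)$, $q\in[2,\infty)$.

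I expect the continuity of $V_1'$ on $X$ in part (ii) to be the main obstacle, since $B_1$ is only finite through the coupling of the $|\cdot|_2$- and $|\cdot|_*$-norms in \eqref{2-3}, so the relevant estimates must be arranged carefully; Lemma~\ref{lem2-7} is the technical device that resolves this difficulty.
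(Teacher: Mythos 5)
First, note that this paper does not prove the lemma at all: it is imported verbatim, with proof, from \cite[Lemma 2.2]{Cingolani-Weth-2016}, so your proposal can only be compared with the original argument there. Your treatment of (i), (iii), (iv), (v), (vi) reproduces that argument in all essentials and is correct: the pointwise bound $u_n^2 \le \tfrac{1}{\log(1+R)}\log(1+|x|)u_n^2$ for $|x|>R$ giving uniformly small $L^2$-tails, local Rellich compactness plus Gagliardo--Nirenberg for (i); the Hardy--Littlewood--Sobolev estimates \eqref{2-1}--\eqref{2-2} and composition with the embedding $X\hookrightarrow L^{8/3}(\mathbb{R}^2)$ for (iii); and Fatou's lemma on the nonnegative kernel of $V_1$, combined with the weak lower semicontinuity of $\|\cdot\|^2$ and the compactness from (i) (respectively, continuity of the remaining terms under strong $H^1$-convergence), for (iv)--(vi).

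The genuine gap is in part (ii), at exactly the point you flag as delicate. Lemma~\ref{lem2-7} is the wrong tool for the dual-norm continuity of $V_1'$: it asserts $B_1(v_nw_n,z(u_n-u))\to 0$ for a \emph{fixed} test function $z\in X$ under \emph{weak} convergence $u_n\rightharpoonup u$, so at best it yields weak-$*$ convergence of $V_1'(u_n)$ in $X'$, with no uniformity over the unit ball $\{\|z\|_X\le 1\}$; but ``$C^1$ on $X$'' requires $\|V_1'(u_n)-V_1'(u)\|_{X'}\to 0$ whenever $u_n\to u$ strongly in $X$. Fortunately the fix needs nothing beyond the estimate \eqref{2-3} you already invoke: writing
\begin{equation*}
\bigl(V_1'(u_n)-V_1'(u)\bigr)v
= 4B_1\bigl((u_n-u)(u_n+u),\,u_nv\bigr) + 4B_1\bigl(u^2,\,(u_n-u)v\bigr)
\end{equation*}
and applying \eqref{2-3} to each term gives
\begin{equation*}
\bigl|\bigl(V_1'(u_n)-V_1'(u)\bigr)v\bigr|
\le C\Bigl(|u_n-u|_{*}\,|u_n+u|_{*}\,|u_n|_{2}+|u_n-u|_{2}\,|u_n+u|_{2}\,|u_n|_{*}
+|u|_{*}^{2}|u_n-u|_{2}+|u|_{2}^{2}|u_n-u|_{*}\Bigr)\|v\|_{X},
\end{equation*}
which tends to $0$ uniformly in $\|v\|_X\le 1$ since $|u_n-u|_2\to 0$ and $|u_n-u|_{*}\to 0$ under strong $X$-convergence. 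There is also a logical ordering issue you should make explicit if you retain Lemma~\ref{lem2-7} anywhere in the proof: in \cite{Cingolani-Weth-2016} that lemma (their Lemma 2.6) is itself proved \emph{using} part (i) of the present lemma, so one must establish (i) first (as you do) and observe that Lemma~\ref{lem2-7} depends only on (i), in order to avoid circularity.
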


Next, we provide a Pohozaev type identity for equation \eqref{1-5}.
The strategy of the proof is similar as e.g. in \cite{B-Lions-1983,DAprile-2004-2},
but some differences occur due to the presence of the logarithmic Newtonian potential.
\begin{lemma}[Pohozaev type identity]
\label{lem2-4}
Suppose that $u \in X$ be a weak solution to \eqref{1-5}.
Then we have the following identity:
\begin{equation*}
    P(u):=\int_{\mathbb{R}^{2}}|u|^{2}dx+\int_{\mathbb{R}^{2}}
    \int_{\mathbb{R}^{2}}\log\left(|x-y|\right)u^{2}(x)u^{2}(y)dxdy
    +\frac{1}{4}\left(\int_{\mathbb{R}^{2}}u^{2}dx\right)^{2}
    -\frac{2}{p}\int_{\mathbb{R}^{2}}|u|^{p}dx=0.
\end{equation*}
\end{lemma}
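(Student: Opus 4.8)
The plan is to derive the Pohozaev identity $P(u)=0$ via the standard scaling argument, testing the weak formulation of \eqref{1-5} against the infinitesimal generators of two distinct scaling families and then eliminating the troublesome terms. Concretely, for a weak solution $u \in X$, the equation reads
\begin{equation*}
\int_{\R^2}\bigl(\nabla u \nabla \varphi + u\varphi + (\log(|\cdot|)\ast u^2)u\varphi\bigr)\,dx = \int_{\R^2}|u|^{p-2}u\varphi\,dx \qquad \text{for all }\varphi \in X.
\end{equation*}
The natural test function for a Pohozaev identity is $\varphi = x \cdot \nabla u$ (the generator of dilations $u \mapsto u(\lambda x)$), but since this need not lie in $X$ and $u$ need not a priori be smooth enough, I would first invoke Remark~\ref{sec:regularity}: every critical point is a classical $C^2$ solution with $u(x)=o(e^{-\alpha|x|})$ and the potential $w(x)=\int_{\R^2}\log|x-y|u^2(y)\,dy$ is $C^3$ and satisfies $\Delta w = 2\pi u^2$ together with the asymptotics $w(x)-\log|x|\int_{\R^2}u^2 \to 0$. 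This decay makes all the integrations by parts below legitimate.

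The main computation is to evaluate the derivative at $\lambda=1$ of $I(u_\lambda)$ for two scaling families and compare. I would work with the dilation $u^\lambda(x):=u(\lambda x)$, multiply the equation by $x\cdot\nabla u$, and integrate over a ball $B_R$, carefully tracking boundary terms and letting $R\to\infty$ using the exponential decay. The local terms are classical: $-\int \Delta u\,(x\cdot\nabla u) = 0$ (the gradient term is scale-invariant in dimension $2$ up to the vanishing boundary contribution), $\int u\,(x\cdot\nabla u) = -\int u^2$, and $\int |u|^{p-2}u\,(x\cdot\nabla u) = -\frac{2}{p}\int|u|^p$. The delicate term is the nonlocal one. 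Writing it as $\int w u\,(x\cdot\nabla u)=\tfrac12\int w\,(x\cdot\nabla(u^2))$ and integrating by parts, one picks up $-\int u^2\,\div(x w) = -2\int u^2 w - \int u^2\,(x\cdot\nabla w)$. The first piece equals $-2 V_0(u)$; the genuinely new contribution is $-\int u^2\,(x\cdot\nabla w)$, which after inserting $\Delta w = 2\pi u^2$ and symmetrizing in $x \leftrightarrow y$ produces the term $-\tfrac{1}{4}\bigl(\int u^2\bigr)^2$ — this is exactly where the logarithmic potential departs from the higher-dimensional Newtonian case and where the anomalous quartic term in $P(u)$ originates.

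I expect the main obstacle to be precisely the rigorous treatment of this nonlocal term: justifying the symmetrization and the integration by parts against the slowly-growing kernel $\log|x-y|$, and controlling the boundary terms at $|x|=R$ where $w$ grows like $\log R$ while $u^2$ decays exponentially. The key identity underlying the symmetrization is
\begin{equation*}
\int_{\R^2}u^2(x)\,(x\cdot\nabla_x w)(x)\,dx = \int_{\R^2}\int_{\R^2}\frac{x\cdot(x-y)}{|x-y|^2}\,u^2(x)u^2(y)\,dxdy,
\end{equation*}
and after interchanging $x$ and $y$ and averaging, the factor $\tfrac{x\cdot(x-y)+y\cdot(y-x)}{2|x-y|^2}=\tfrac12$ collapses the double integral to $\tfrac12\bigl(\int u^2\bigr)^2$, contributing $-\tfrac14(\int u^2)^2$ overall. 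Once the boundary terms are shown to vanish via the $o(e^{-\alpha|x|})$ decay of $u$ and the at-most-logarithmic growth of $w$ guaranteed by \eqref{eq:asymptotics-w}, assembling the pieces yields $P(u)=0$ exactly as stated. To make the dilation test function admissible I would either approximate $x\cdot\nabla u$ by cutoff versions $\eta_R(x)\,x\cdot\nabla u$ with $\eta_R$ supported in $B_{2R}$, or differentiate $I(u^\lambda)$ directly in $\lambda$ and use that $\frac{d}{d\lambda}\big|_{\lambda=1}I(u^\lambda)=0$ is not immediate (since $u^\lambda \notin X$ control requires the same decay estimates); the cleaner route is the multiply-and-integrate approach with explicit control of boundary terms, which is the version I would carry out.
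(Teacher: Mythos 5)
Your proposal is correct and follows essentially the same route as the paper's proof: invoke the regularity and exponential decay from Remark~\ref{sec:regularity}, multiply \eqref{1-5} by $x\cdot\nabla u$ and integrate over balls $B_R(0)$, kill the boundary terms using the decay of $u$, $\nabla u$ and the logarithmic growth of $w$, and symmetrize $\int_{\R^2} u^2\,(x\cdot\nabla w)\,dx$ in $x\leftrightarrow y$ to produce exactly the anomalous term $\frac{1}{4}\bigl(\int_{\R^2}u^2\,dx\bigr)^2$. (Only a cosmetic remark: in your intermediate integration-by-parts line a factor $\tfrac12$ is dropped, but your final coefficients, including $-\tfrac14\bigl(\int u^2\bigr)^2$, are the correct ones, matching the paper.)
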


\begin{proof}
We first recall from Remark~\ref{sec:regularity} that $u$ is of class $C^{2}$ with $u(x)= o(-e^{\alpha |x|})$ as $|x | \rightarrow \infty$ for any $\alpha>0$, and that the function
$$
w: \R^2 \to \R, \qquad w(x)=\int_{\R^{2}}\log(|x-y|)u^{2}(y)dy,
$$
is of class $C^3$. In this proof, we also consider the functions
$$
g(u)=|u|^{p-2}u-u \quad \text{and}\quad  G(u)=\int_{0}^{u}g(s)ds= \frac{|u|^p}{p}-\frac{u^2}{2}\qquad \in C^2(\R)
$$
which also decay exponentially as $|x| \to \infty$. In the first part of the proof we use the device of Pohozaev \cite{Pohozave-1965},
multiplying the equation by $x \cdot \nabla u$ and integrating by parts to get an integral identity on a ball $B_{R}(0)$. In the second part, we then show that the
boundary term in the identity tends to zero as $R\rightarrow \infty$. So let $R>0$. Since, as noted e.g. in \cite[p. 136]{Willem-1996}, for any function $u \in C^2(\R^2)$ we have
$$
\Delta u (x \cdot \nabla u) = \div\Bigl (\nabla u (x \cdot \nabla u)- x \frac{|\nabla u|^2}{2}\Bigr) \qquad \text{on $\R^2$,}
$$
the divergence theorem gives
\begin{equation}
    \int_{B_{R}(0)}-\Delta u (x \cdot \nabla u)dx = -\frac{1}{R}\int_{\partial B_{R}(0)}
      |x \cdot \nabla u|^{2}d\sigma + \frac{R}{2} \int_{\partial B_{R}(0)}|\nabla u|^{2}d\sigma. \label{eq:Pohozaev-1}
\end{equation}
Similarly, since $g(u) (x \cdot \nabla u) = \div (x \,G(u))- 2 G(u)$ on $\R^2$, we have
\begin{equation}
    \int_{B_{R}(0)} g(u)(x \cdot \nabla u)dx= -2 \int_{B_{R}(0)}G(u)dx +R\int_{\partial B_{R}(0)}G(u)d\sigma;\label{eq:Pohozaev-2}.
\end{equation}
Moreover, since $w u (x \cdot \nabla u) = \frac{1}{2} \Bigl( \div [x\, w u^2] - u^2 (x \cdot \nabla w)-2w u^2 \Bigr)$, we have
\begin{equation}
    \int_{B_{R}(0)} w u(x \cdot \nabla u)dx= - \frac{1}{2}\int_{B_{R}(0)}u^2 (x \cdot \nabla w)dx
     - \int_{B_{R}(0)}wu^2 dx + \frac{R}{2} \int_{\partial B_{R}(0)}w u^{2}d\sigma.\label{eq:Pohozaev-3}
\end{equation}
Thus, multiplying \eqref{1-5} by $x \cdot \nabla u$ and  integrating on $B_R(0)$, we deduce from
\eqref{eq:Pohozaev-1}, \eqref{eq:Pohozaev-2} and \eqref{eq:Pohozaev-3} that
\begin{equation}
\int_{B_{R}(0)}\Bigl(\frac{u^2 (x \cdot \nabla w)}{2} +w u^2-2 G(u)\Bigr)dx
    = \int_{\partial B_{R}(0)} \Bigl(-\frac{|x \cdot \nabla u|^{2}}{R}+ R \Bigl(\frac{|\nabla u|^{2} }{2} +\frac{w u^{2} }{2}- G(u)\Bigr)\Bigr)
d\sigma.\label{eq:Pohozaev-4}
\end{equation}
Next, following the idea in \cite{B-Lions-1983},
we will show that the right hand side in \eqref{eq:Pohozaev-4} converges
to zero for a suitable sequence $R_n \rightarrow \infty$, i.e.,
\begin{equation}
\label{f-R-N}
    R_n \int_{\partial B_{R_n}(0)} |f|\;d\sigma \rightarrow 0\quad \text{for the function $x \mapsto f(x)=  \frac{|\nabla u|^{2} }{2} +\frac{w u^{2} }{2}- G(u)-\frac{|x \cdot \nabla u|^{2}}{|x|^2}$.}
\end{equation}
For this it suffices to show that
\begin{equation}
  \label{eq:L1-suff}
f \in L^1(\R^{2}).
\end{equation}
Indeed, if no sequence $(R_n)_n$ with $R_n \rightarrow \infty$ and
(\ref{f-R-N}) exists,  it follows that
\begin{equation*}
    \int_{\partial B_{R}(0)} |f|\;d\sigma \geq \frac{c}{R} \quad \text{for $R\geq R_0$ with some constants $c, R_0 >0$.}
\end{equation*}
This contradicts (\ref{eq:L1-suff}), as
\begin{equation*}
    \int_{\R^2}|f|\;dx = \int_{0}^{\infty} d R\int_{\partial B_{R}(0)} |f|\;d\sigma
    \geq c\int_{R_0}^{\infty} \frac{1}{R} \; dR = \infty.
\end{equation*}
To see (\ref{eq:L1-suff}), it suffices to recall from Remark~\ref{sec:regularity} that $u$ and $\nabla u$ decay exponentially, whereas $w$ grows logarithmically as $|x| \to \infty$. Indeed, the exponential decay of $\nabla u$ follows by the decay of $u$ and standard elliptic regularity. Consequently, the function $f$ also decay exponentially as $|x| \to \infty$, which gives (\ref{eq:L1-suff}) and therefore (\ref{f-R-N}).
To conclude the proof, we note that, by the same arguments,
\begin{equation*}
    wu^2 \in L^{1}(\mathbb{R}^{2}) \quad \text{and}\quad   G(u) \in L^{1}(\mathbb{R}^{2}).
\end{equation*}
Moreover a direct calculation gives
$$
x \cdot \nabla w(x) = \int_{\R^{2}}\frac{|x|^{2}- x\cdot y}{|x-y|^2}u^{2}(y)dy \qquad \text{for $x \in \R^2$.}
$$
By the exponential decay of $u$, it then follows that $|x \cdot \nabla w(x)| \le C|x|$ for $x \in \R^2$ with a constant $C>0$, and thus $u^{2}(x \cdot \nabla w) \in L^{1}(\mathbb{R}^{2})$. Moreover,
\begin{align*}
\int_{\R^2}u^2 (x \cdot \nabla w)dx=\int_{\R^{2}} \int_{\R^2} \frac{|x|^{2}- x\cdot y}{|x-y|^2}u^2(x)u^{2}(y)dy &=\frac{1}{2}
\int_{\R^{2}} \int_{\R^2} \frac{|x|^{2}+|y|^2- 2 x\cdot y}{|x-y|^2}u^2(x)u^{2}(y)dy\\
&= \frac{1}{2}\left(\int_{\mathbb{R}^{2}}u^{2}dx\right)^{2}.
\end{align*}
Consequently, with $P(u)$ as given in the statement of the lemma and $f$ given in (\ref{f-R-N}) we have, by (\ref{eq:Pohozaev-4}),
\begin{align*}
P(u) = \int_{\R^2}\Bigl(\frac{u^2 (x \cdot \nabla w)}{2} + w u^2-2 G(u)\Bigr)dx &= \lim_{n \to \infty} \int_{B_{R_n}(0)}\Bigl(\frac{u^2 (x \cdot \nabla w)}{2} w u^2-2 G(u)\Bigr)dx\\
&= \lim_{n \to \infty}  R_n \int_{\partial B_{R_n}(0)} |f|\;d\sigma =  0,
\end{align*}
as claimed.
\end{proof}

We close this section with some observations on the shape of the functional $I$.

\begin{lemma}\label{lem2-5}
There exists $\rho>0$ such that
\begin{equation}\label{2-5}
    m_{\beta} := \inf\{I(u)\::\: u \in X, \ \|u\|=\beta\} >0
    \qquad \text{for}\ 0<\beta \leq \rho
\end{equation}
and
\begin{equation}
\label{2-5-1}
    n_{\beta} :=\inf \{I'(u)u \::\: u \in X\:,\: \|u\|=\beta \}>0 \qquad \text{for $0 <
  \beta \le \rho$.}
\end{equation}

\end{lemma}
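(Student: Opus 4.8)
The plan is to bound $I$ and $I'(u)u$ from below by functions of $\|u\|$ alone that are strictly positive for small norm. The crucial structural fact is the decomposition $V_0 = V_1 - V_2$: here $V_1(u)\ge 0$ because $\log(1+|x-y|)\ge 0$ and $u^2(x)u^2(y)\ge 0$, so $V_1$ is a favourable term that may simply be discarded when deriving lower bounds, while the term $V_2$ is controlled by the Hardy--Littlewood--Sobolev estimate already recorded in \eqref{2-2}.

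First I would estimate $I$. Since $V_1(u)\ge 0$ we have $V_0(u)\ge -V_2(u)$, hence
\[
I(u) \ge \frac{1}{2}\|u\|^2 - \frac{1}{4}V_2(u) - \frac{1}{p}\int_{\R^2}|u|^p\,dx.
\]
By \eqref{2-2} we have $V_2(u)\le C_0|u|_{\frac{8}{3}}^4$, and since $H^1(\R^2)$ embeds continuously into every $L^s(\R^2)$ with $s\in[2,\infty)$, this gives $V_2(u)\le C_1\|u\|^4$ and $\int_{\R^2}|u|^p\,dx\le C_2\|u\|^p$. Therefore
\[
I(u)\ge \frac{1}{2}\|u\|^2 - \frac{C_1}{4}\|u\|^4 - \frac{C_2}{p}\|u\|^p.
\]
Evaluated at $\|u\|=\beta$, the right-hand side equals $\frac{1}{2}\beta^2\left(1-\frac{C_1}{2}\beta^2-\frac{2C_2}{p}\beta^{p-2}\right)$, and since $p>2$ both exponents $2$ and $p-2$ of $\beta$ in the bracket are positive, so the bracket is strictly positive for all $\beta$ in some interval $(0,\rho_1]$. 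This yields \eqref{2-5} with $\rho=\rho_1$.

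For \eqref{2-5-1} I would first compute $I'(u)u$ using Lemma~\ref{lem2-2}(ii): taking $v=u$ in $V_0'(u)v=4B_0(u^2,uv)$ gives $V_0'(u)u=4B_0(u^2,u^2)=4V_0(u)$, whence
\[
I'(u)u = \|u\|^2 + V_0(u) - \int_{\R^2}|u|^p\,dx.
\]
Again using $V_0(u)\ge -V_2(u)\ge -C_1\|u\|^4$ together with $\int_{\R^2}|u|^p\,dx\le C_2\|u\|^p$, I obtain $I'(u)u\ge \|u\|^2-C_1\|u\|^4-C_2\|u\|^p$, which at $\|u\|=\beta$ is positive for $\beta$ in some interval $(0,\rho_2]$ by the same argument. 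Taking $\rho=\min\{\rho_1,\rho_2\}$ then gives both conclusions simultaneously. I do not anticipate any serious obstacle here: the argument rests entirely on the nonnegativity of $V_1$, the single estimate \eqref{2-2}, and the Sobolev embeddings of $H^1(\R^2)$; the only point requiring minor care is to select one threshold $\rho$ valid for both $m_\beta$ and $n_\beta$, which is handled by taking the minimum of the two individual thresholds.
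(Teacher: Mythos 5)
Your proof is correct and coincides with the paper's own argument: both rest on discarding the nonnegative term $V_1$ to get $V_0 \ge -V_2$, controlling $V_2$ via the Hardy--Littlewood--Sobolev estimate \eqref{2-2}, and invoking the Sobolev embeddings of $H^1(\mathbb{R}^2)$ to obtain lower bounds of the form $\|u\|^2\bigl(1-C_1\|u\|^2-C_2\|u\|^{p-2}\bigr)$ for both $I(u)$ and $I'(u)u$. No further comment is needed.
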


\begin{proof}
For each $u \in X$, by \eqref{2-2} and Sobolev embeddings we have
$$
 I(u)\geq \frac{\|u\|^{2}}{2}-\frac{V_{2}(u)}{4}-\frac{1}{p}
       \int_{\mathbb{R}^{2}}|u|^{p}dx \geq \frac{\|u\|^{2}}{2} - \frac{C_{0}}{4}|u|_{\frac{8}{3}}^{4}
       -\frac{1}{p}|u|_{p}^{p} \geq \frac{\|u\|^{2}}{2}\left(1-C_{1}\|u\|^{2}
       -C_{2}\|u\|^{p-2}\right),
$$
where $C_0, C_{1}, C_{2}>0$ are constants.  This implies that
\eqref{2-5} holds for $\rho>0$ sufficiently small. Since
$$
I'(u)(u)=  \|u\|^2 + V_0(u) - |u|_p^p \ge  \|u\|^2- V_2(u)
- |u|_p^p
$$
for $u \in X$, a similar estimate shows that \eqref{2-5-1} holds for
$\rho>0$ sufficiently small.
\end{proof}

\begin{lemma}\label{lem2-6}
Let $u \in X \setminus \{0\}$, and let $u_t \in X$ be defined by $u_t(x):= t^2u(tx)$ for $x \in \mathbb R^2$, $t > 0$.
Then we have
$$
I(u_t) \to -\infty \qquad \text{as $t \to \infty$.}
$$
In particular, the functional $I$ is not bounded from below.
\end{lemma}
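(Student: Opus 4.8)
The plan is to compute $I(u_t)$ explicitly as a function of $t$ by tracking the scaling of each term under the transformation $u \mapsto u_t$, and then to read off the dominant term that drives $I(u_t) \to -\infty$.

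First I would record the elementary scaling identities, all obtained by the change of variables $\xi = tx$. From $u_t(x) = t^2 u(tx)$ one gets $\nabla u_t(x) = t^3 (\nabla u)(tx)$, and hence
\begin{equation*}
\int_{\mathbb{R}^2}|\nabla u_t|^2\,dx = t^4 \int_{\mathbb{R}^2}|\nabla u|^2\,dx, \quad \int_{\mathbb{R}^2}u_t^2\,dx = t^2\int_{\mathbb{R}^2}u^2\,dx, \quad \int_{\mathbb{R}^2}|u_t|^p\,dx = t^{2p-2}\int_{\mathbb{R}^2}|u|^p\,dx.
\end{equation*}
The decisive computation is that of the nonlocal term. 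Since $u_t^2(x)=t^4 u^2(tx)$ and, after the substitution $\xi=tx$, $\eta=ty$, one has $|x-y| = |\xi-\eta|/t$, the logarithm splits as $\log|x-y| = \log|\xi-\eta| - \log t$. This yields
\begin{equation*}
V_0(u_t) = t^4\Bigl[V_0(u) - (\log t)\Bigl(\int_{\mathbb{R}^2}u^2\,dx\Bigr)^2\Bigr].
\end{equation*}
Here $V_0(u)$ is finite because $u \in X$, so that $V_0(u) = V_1(u)-V_2(u)$ with $V_1(u)<\infty$ by \eqref{2-3} and $V_2(u)<\infty$ by \eqref{2-2}.

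Substituting these identities into the definition of $I$, I would observe that every positive contribution is at most of order $t^4$, namely $\tfrac{t^4}{2}\int_{\mathbb{R}^2}|\nabla u|^2\,dx$, $\tfrac{t^4}{4}V_0(u)$ and the lower-order $\tfrac{t^2}{2}\int_{\mathbb{R}^2}u^2\,dx$, whereas the nonlocal term supplies a contribution $-\tfrac{t^4\log t}{4}\bigl(\int_{\mathbb{R}^2}u^2\,dx\bigr)^2$ of order $t^4\log t$ with a negative sign. Since $u \neq 0$ forces $\bigl(\int_{\mathbb{R}^2}u^2\,dx\bigr)^2 > 0$, and since the local superquadratic term $-\tfrac{t^{2p-2}}{p}\int_{\mathbb{R}^2}|u|^p\,dx$ is nonpositive and may simply be discarded, I obtain for large $t$ the upper bound $I(u_t) \le t^4\bigl[C - \tfrac{\log t}{4}(\int_{\mathbb{R}^2}u^2\,dx)^2\bigr] + \tfrac{t^2}{2}\int_{\mathbb{R}^2}u^2\,dx$, where $C = \tfrac12\int_{\mathbb{R}^2}|\nabla u|^2\,dx + \tfrac14 V_0(u)$. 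The bracketed coefficient tends to $-\infty$, so the $t^4$-term dominates the remaining $t^2$-term and $I(u_t)\to -\infty$.

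The only genuinely delicate point is the scaling of the logarithmic convolution term: it is the $-\log t$ factor produced there, rather than any power of $t$, that forces the divergence. This is also what makes the conclusion hold uniformly for all $p > 2$, since the local term is nonpositive and only helps, irrespective of whether the exponent $2p-2$ exceeds $4$ or not.
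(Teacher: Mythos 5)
Your proposal is correct and follows essentially the same route as the paper: both compute the scaling of each term explicitly (arriving at exactly the same expansion of $I(u_t)$, with the decisive $-\tfrac{t^4\log t}{4}\bigl(\int_{\mathbb{R}^2}u^2\,dx\bigr)^2$ contribution from the logarithmic convolution term) and conclude divergence to $-\infty$. Your write-up merely spells out the final comparison of orders, which the paper leaves implicit after the word ``Consequently.''
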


\begin{proof}
Let $u\in X \backslash \{0\}$. Then we have
\begin{align*}
  I(u_{t})&=\frac{t^{4}}{2}\int_{\mathbb{R}^{2}}|\nabla u|^{2}dx
    +\frac{t^{2}}{2}\int_{\mathbb{R}^{2}}u^{2}dx+\frac{t^{4}}{4}
    \int_{\mathbb{R}^{2}}\int_{\mathbb{R}^{2}}\log\left(|x-y|\right)
    u^{2}(x)u^{2}(y)dxdy\\
  &\quad-\frac{t^{4}\log t}{4}\left(\int_{\mathbb{R}^{2}}|u|^{2}dx
    \right)^{2}-\frac{t^{2p-2}}{p}\int_{\mathbb{R}^{2}}|u|^{p}dx.
\end{align*}
Consequently, $I(u_{t})\rightarrow -\infty$ as $t\rightarrow\infty$, and the claim follows.
\end{proof}

\section{Existence of mountain pass and ground state solutions to \eqref{1-5}}
\label{sec:exist-ground-state}

\indent

In this section, we will prove Theorem~\ref{th1-1}. For this we will first prove the existence of critical points of $I$ at the mountain pass energy level $c_{mp}$ defined in (\ref{eq:def-c_mp}). Within this step, we shall use the following
general minimax principle from \cite{li-wang:11}. It is a somewhat stronger variant of \cite[Theorem 2.8]{Willem-1996} which gives rise to Cerami sequences instead of Palais-Smale sequences.

\begin{proposition}[{$\!\!$ \cite[Proposition 2.8]{li-wang:11}}]
\label{prop3-2}
Let $X$ be a Banach space. Let $M_{0}$ be a closed subspace of the metric
space $M$ and $\Gamma_{0} \subset C(M_{0},\: X)$. Define
\vskip -0.2 true cm
\begin{equation*}
    \Gamma := \left\{\gamma \in C(M,\: X)\::\: \gamma|_{M_{0}} \in \Gamma_{0}
    \right\}.
\end{equation*}
\vskip -0.1 true cm
\noindent If $\varphi \in C^{1}(X,\: \mathbb{R})$ satisfies
\begin{equation*}
    \infty > c := \inf_{\gamma \in \Gamma} \sup_{u \in M} \varphi\left(
    \gamma (u)\right) > a:= \sup_{\gamma_{0}\in \Gamma_{0}} \sup_{u \in M_{0}}
    \varphi\left(\gamma_{0}(u)\right),
\end{equation*}
then, for every $\varepsilon \in \left(0,\frac{c-a}{2}\right)$, $\delta>0$ and
$\gamma \in \Gamma $ with $\sup \limits_{u \in M}\varphi(\gamma(u)) \leq c+\varepsilon$ there exists $u \in  X$ such that
\begin{itemize}
  \item [\rm(a)] $c - 2\varepsilon \leq \varphi(u) \leq c + 2\varepsilon$,
\vspace{-0.2cm}

  \item [\rm(b)] $\text{dist}\ (u, \gamma(M)) \leq 2\delta$,
\vspace{-0.2cm}

  \item [\rm(c)] $\left(1+\|u\|_{X}\right)\|\varphi'(u)\|_{X'} \leq
     \frac{8\varepsilon}{\delta}$.
\end{itemize}
\end{proposition}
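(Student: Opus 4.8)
The plan is to argue by contradiction via a quantitative deformation along a pseudo-gradient flow, the one novelty with respect to the classical minimax principle of \cite[Theorem 2.8]{Willem-1996} being the use of a weighted vector field that produces the Cerami factor $(1+\|u\|_X)$ in (c). Fix $\varepsilon \in (0,\frac{c-a}{2})$, $\delta>0$ and $\gamma \in \Gamma$ with $\sup_{u\in M}\varphi(\gamma(u)) \le c+\varepsilon$, and suppose that no $u$ satisfying (a), (b) and (c) exists. Writing $S := \{u\in X : c-2\varepsilon \le \varphi(u) \le c+2\varepsilon,\ \mathrm{dist}(u,\gamma(M)) \le 2\delta\}$ for the set encoding (a) and (b), the negation of (c) then reads
\[
(1+\|u\|_X)\,\|\varphi'(u)\|_{X'} > \frac{8\varepsilon}{\delta}
\qquad \text{for every } u \in S .
\]
The goal is to manufacture a competitor $\tilde\gamma \in \Gamma$ with $\sup_{M}\varphi(\tilde\gamma) < c$, contradicting the definition of $c$ as an infimum over $\Gamma$. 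Note that since $\varepsilon<\frac{c-a}{2}$ we have $a<c-2\varepsilon$, so $\gamma(M_0)\subset\{\varphi\le a\}$ lies off $S$; this is what will allow the deformation to fix $M_0$ and stay inside $\Gamma$.

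On the open set $\{\varphi'\neq 0\}\supset S$ I would choose a locally Lipschitz pseudo-gradient field, normalized so that $\|V(u)\|_X \le 1$ and $\langle \varphi'(u),V(u)\rangle \ge \tfrac12\|\varphi'(u)\|_{X'}$, together with a locally Lipschitz cutoff $\chi:X\to[0,1]$ that equals $1$ on the inner set $\{c-\varepsilon \le \varphi \le c+\varepsilon,\ \mathrm{dist}(\cdot,\gamma(M)) \le \delta\}$ and vanishes off $S$. The crucial Cerami device is then to flow along $W(u):=\chi(u)\,(1+\|u\|_X)\,V(u)$; equivalently, to measure lengths with the conformal element $\|du\|_X/(1+\|u\|_X)$, which is precisely what converts the hypothesis into a \emph{uniform} decay rate. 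Since $\|W(u)\|_X \le 1+\|u\|_X$, Gronwall's inequality guarantees that the flow $\partial_t\eta = -W(\eta)$, $\eta(0,\cdot)=\mathrm{id}$, exists for all $t\ge 0$ and depends continuously on the initial datum. Because $\chi$ vanishes at the boundary of the tube, $W$ dies out as $\mathrm{dist}(\cdot,\gamma(M))\to 2\delta$, so no orbit issued from $\gamma(M)$ can cross that boundary in finite time; hence each such orbit remains in $S$, where the displayed gradient bound is available.

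Along the flow one computes
\[
\frac{d}{dt}\,\varphi(\eta) = -\langle \varphi'(\eta),W(\eta)\rangle \le -\tfrac12\,\chi(\eta)\,(1+\|\eta\|_X)\,\|\varphi'(\eta)\|_{X'} \le 0 ,
\]
and on the inner set (where $\chi\equiv 1$) the contradiction hypothesis bounds the right-hand side above by $-\frac{4\varepsilon}{\delta}$. Running the flow to time $T:=\frac{\delta}{2}$ therefore lowers $\varphi$ by more than $\varepsilon$ along any orbit that stays in the inner set up to time $T$, pushing values from $\le c+\varepsilon$ below $c$; orbits that leave the inner set by having $\varphi$ drop below $c-\varepsilon$ are already below $c$, and points of $\gamma(M_0)$ (where $\varphi\le a<c-2\varepsilon$) are fixed because $\chi$ vanishes there. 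Setting $\tilde\gamma := \eta(T,\gamma(\cdot))$ one then obtains $\tilde\gamma \in C(M,X)$ with $\tilde\gamma|_{M_0}=\gamma|_{M_0}\in\Gamma_0$, hence $\tilde\gamma\in\Gamma$, and $\sup_M\varphi(\tilde\gamma)<c$, the desired contradiction. Applying the statement along $\varepsilon_n\to 0$ with $\delta_n$ chosen so that $8\varepsilon_n/\delta_n\to 0$ finally yields the Cerami sequences exploited in the sequel.

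The genuinely delicate step is the clause ``stays in the inner set up to time $T$'': the uniform decay forces a flow speed of order $1+\|\eta\|_X$, which in the norm can carry an orbit out of the inner $\delta$-tube before the energy has fallen by $\varepsilon$ when $\|\eta\|_X$ is large, whereas a norm-bounded field restores displacement control but destroys the uniform rate. Reconciling the two with the \emph{same} constant $8\varepsilon/\delta$, uniformly in $\gamma$, is the main obstacle. The clean resolution is to carry out the entire construction in the complete metric space $(X,d)$ induced by the length element $\|du\|_X/(1+\|u\|_X)$: there the weighted field has unit speed, the hypothesis is exactly the $d$-gradient bound $\ge 8\varepsilon/\delta$, the cutoff confines each orbit to the tube, and the $d$-displacement over $[0,\delta/2]$ is at most $\delta/2$. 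Verifying that this metric deformation delivers conclusion (b) in the norm distance, with constants independent of the size of $\gamma(M)$, is where the argument demands the most care; an alternative that avoids building the flow explicitly is to apply Ekeland's variational principle to $\varphi$ on $(X,d)$.
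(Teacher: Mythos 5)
The first thing to note is that the paper contains no proof of this statement to compare yours against: Proposition~\ref{prop3-2} is quoted verbatim from Li and Wang \cite[Proposition 2.8]{li-wang:11} and used as a black box (the authors only remark that it strengthens \cite[Theorem 2.8]{Willem-1996} by producing Cerami rather than Palais--Smale sequences). Your proposal must therefore stand on its own, and as a proof it is incomplete at exactly the point you flag yourself in the last paragraph; moreover, that gap is not a matter of ``care'' but a step that fails as you have set it up. The negation of (a), (b), (c) gives the lower bound $(1+\|u\|_X)\|\varphi'(u)\|_{X'}>8\varepsilon/\delta$ \emph{only} on the norm tube $S$. If you cut off in this norm tube and flow along $W=\chi\,(1+\|\cdot\|_X)V$, the orbits do remain in $S$ (your backward-uniqueness remark is fine), but they are not confined to the inner set where $\chi\equiv 1$: the speed is of order $1+\|\eta\|_X$, so within the time $T=\delta/2$ an orbit of large norm can travel a norm distance far exceeding $\delta$, drift into the annular region where $\chi<1$, and stall there with its energy still in $(c-\varepsilon,c+\varepsilon]$ --- in particular, possibly still $\geq c$. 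For such orbits the uniform decay estimate gives nothing, so $\sup_M\varphi(\tilde\gamma)<c$ does not follow and no contradiction is reached. This is precisely the step that works in Willem's Palais--Smale version (there the field has norm $\leq 1$, so the displacement bound keeps orbits in the inner tube) and breaks for the weighted field.

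Your proposed repair --- doing everything in the metric $d$ induced by the length element $\|du\|_X/(1+\|u\|_X)$ --- restores the confinement but loses the hypothesis: since $d(u,v)\leq\|u-v\|_X$ (take the straight segment), the $d$-tube of radius $2\delta$ around $\gamma(M)$ is \emph{larger} than the norm tube, and the contradiction hypothesis provides no gradient bound on the excess region through which the $d$-flow may pass. What this deformation (or, equivalently, Ekeland's principle on $(X,d)$) actually proves is the proposition with (b) weakened to $d(u,\gamma(M))\leq 2\delta$, and that weakening is not harmless for this paper: in the application (proof of Lemma~\ref{lem3-3}, with $\varepsilon=1/n^2$, $\delta=1/n$ on $\tilde X=\mathbb{R}\times X$), conclusion (b) in the norm is what yields (\ref{3-5-0}) and hence $s_n\to 0$ in (\ref{3-5}); a bound in the metric $d$ would only give $|s_n|\lesssim \bigl(1+\sup_t\|\gamma_n(t)\|_X\bigr)/n$, which is useless without an a priori bound on the optimizing paths $\gamma_n$. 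So the gap is genuine: obtaining (b) in the norm distance together with the $\gamma$-independent constant $8\varepsilon/\delta$ in (c) requires an idea that is not in your sketch, and this is presumably why the paper simply cites \cite{li-wang:11} for the statement rather than proving it.
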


We now consider the mountain pass value
\begin{equation*}
c_{mp}=\inf\limits_{\gamma \in \Gamma}
   \max_{t \in [0,1]}I(\gamma(t)),
\end{equation*}
where
$$
\Gamma=\left\{\gamma\in C\left([0,1],\: X\right)\ | \ \gamma(0)=0, \ I(\gamma(1))<0\right\}.
$$
By Lemmas \ref{lem2-5} and \ref{lem2-6}, we find that
$$
0 < m_{\rho} \leq c_{mp}< \infty,
$$
which means that the functional $I$ has a mountain pass geometry. As the following lemma shows, we can now use Proposition \ref{prop3-2}
to prove the existence of a Cerami sequence $\{u_{n}\}\subset X$ at the energy level $c_{mp}$ with a key additional property. For Palais-Smale sequences in related variational settings, this idea goes back to \cite{Jeanjean-1997} and
has also been used in \cite{Hirata-2010,Moroz-VanSchaftingen-2015}.
\begin{lemma}\label{lem3-3}
Let $p >2 $. Then there exists a sequence $\{u_{n}\}$ in $X$ such that,
as $n\rightarrow\infty$,
\begin{equation}\label{3-2}
  I(u_{n})\rightarrow c_{mp},
  \quad \|I'(u_{n})\|_{X'}\left(1+\|u_{n}\|_{X}\right)\rightarrow 0
  \quad \text{and}\quad J(u_{n}) \rightarrow 0,
\end{equation}
where $J: X \to \R$ is defined by (\ref{eq:def-G}).
\end{lemma}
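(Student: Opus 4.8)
The plan is to apply the minimax principle of Proposition~\ref{prop3-2} to an \emph{augmented} functional on an \emph{enlarged} parameter space, so that the scaling direction $t \mapsto u_t$ becomes one of the minimax variables. Concretely, I would work on the product space $\widehat X := \R \times X$ and define the auxiliary functional
\begin{equation*}
\widehat I(s,u) := I\bigl(u_{e^s}\bigr) = I\bigl(H(s,u)\bigr), \qquad H(s,u)(x):= e^{2s}u(e^s x),
\end{equation*}
where I write $t=e^s$ so that the dilation parameter ranges over all of $\R$ and the map $s \mapsto H(s,u)$ is a smooth $\R$-action. The point of this device, which goes back to Jeanjean~\cite{Jeanjean-1997}, is that the extra derivative $\partial_s \widehat I(s,u)\big|_{s=0}$ is exactly (a multiple of) the scaling-derivative quantity $J(u)$ appearing in the definition \eqref{eq:def-G}: differentiating the explicit expression for $I(u_t)$ computed in the proof of Lemma~\ref{lem2-6} at $t=1$ reproduces the four terms of $J$. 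Thus a Cerami sequence for $\widehat I$ whose $X$-component approaches the diagonal $s=0$ automatically satisfies $J(u_n)\to 0$ in addition to $I(u_n)\to c_{mp}$ and the Cerami condition for $I$.

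To set up the minimax geometry I would take $M = [0,1] \times [-L,L]$ for a large fixed $L>0$ (or, more cleanly, $M=[0,1]$ with $M_0 = \{0,1\}$ as in the ordinary mountain pass, but now comparing the path value to the augmented functional). The cleanest route is to keep $M = [0,1]$, $M_0 = \{0,1\}$, $\Gamma_0$ the constant maps $\gamma(0)=0$, $\gamma(1)=\gamma_0(1)$ with $I(\gamma(1))<0$, but to run the principle for $\widehat I$ on paths valued in $\widehat X$ of the form $s\equiv 0$, and then verify that the minimax value of $\widehat I$ over this enlarged class of paths still equals $c_{mp}$. For this identification one uses that $\widehat I(0,u)=I(u)$ together with the fact, from Lemma~\ref{lem2-6}, that for every fixed $u\neq 0$ one has $\widehat I(s,u)\to -\infty$ as $s\to+\infty$; this lets one show that allowing the parameter $s$ to vary does not lower the minimax level below $c_{mp}$, while taking $s\equiv 0$ shows it does not exceed it. Hence the augmented minimax value $\widehat c$ equals $c_{mp}$, which is positive and finite by Lemmas~\ref{lem2-5} and~\ref{lem2-6}.

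With $\widehat c = c_{mp}>a=0$ established, I would apply Proposition~\ref{prop3-2} to $\widehat I$ with $\eps=\eps_n \to 0$ and $\delta=\delta_n=\sqrt{\eps_n}$, extracting points $(s_n,v_n)\in\widehat X$ with $\widehat I(s_n,v_n)\to c_{mp}$ and $(1+\|(s_n,v_n)\|)\,\|\widehat I'(s_n,v_n)\|\to 0$. Setting $u_n := H(s_n,v_n)=(v_n)_{e^{s_n}}$, the partial derivative in $u$ yields, after transporting back through the (bounded, invertible) differential of the dilation $H(s_n,\cdot)$, that $(1+\|u_n\|_X)\|I'(u_n)\|_{X'}\to 0$, while the partial derivative in $s$ gives $\partial_s\widehat I(s_n,v_n)\to 0$, which unwound is precisely $J(u_n)\to 0$. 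Equivariance of $I$ under dilations (only the norm of $X$, not $I$, fails to be scale-invariant) is what guarantees that the Cerami property transfers from $(s_n,v_n)$ to $u_n$ without deterioration.

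The main obstacle I expect is twofold. First, one must check that $\widehat I \in C^1(\widehat X,\R)$ and that the differential of the dilation map $H(s,\cdot): X \to X$ is a bounded isomorphism with bounds \emph{uniform in $s$ on bounded $s$-intervals}, so that the estimate $(1+\|u_n\|_X)\|I'(u_n)\|_{X'}\to 0$ genuinely follows from its augmented counterpart; here the non-scale-invariance of $\|\cdot\|_X$ and the logarithmic weight $|\cdot|_*$ require care, since dilation does not act isometrically on the weighted term $\int \log(1+|x|)u^2\,dx$. This is exactly why I phrase the perturbation multiplicatively as $e^{2s}u(e^s\cdot)$ and control everything on bounded ranges of $s$. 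Second, one must ensure the parameter $s_n$ stays bounded (equivalently $e^{s_n}$ stays in a compact subset of $(0,\infty)$), for otherwise the transfer of norms degenerates; boundedness of $s_n$ follows from the near-diagonal location guaranteed by condition~(b) of Proposition~\ref{prop3-2} together with the coercivity in $s$ coming from $\widehat I(s,u)\to-\infty$ as $s\to+\infty$ and the positive lower bound near $s$-values producing small norm. Once these two technical points are secured, the explicit computation of $\partial_s\widehat I$ identifying it with $J$ is the routine differentiation already implicit in the proof of Lemma~\ref{lem2-6}.
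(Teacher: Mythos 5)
Your proposal is correct and follows essentially the same route as the paper: Jeanjean's augmented-functional device on $\R \times X$ with $\varphi(\s,v)=I\bigl(e^{2\s}v(e^{\s}\cdot)\bigr)$, the identity $\partial_{\s}\varphi(\s,v)=J(\rho(\s,v))$, identification of the augmented minimax level with $c_{mp}$, application of Proposition~\ref{prop3-2} to paths of the form $t\mapsto(0,\gamma_n(t))$ so that condition (b) forces $\s_n\to 0$, and transfer of the Cerami estimate back to $u_n=\rho(\s_n,v_n)$ using that the dilation is a $(1+o(1))$-isomorphism of $X$ as $\s_n\to 0$. The two technical points you flag (uniform norm equivalence under dilation and boundedness of $\s_n$) are exactly the ones the paper handles, and in the same way.
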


\begin{proof}
Following the strategy of \cite{Jeanjean-1997,Hirata-2010,Moroz-VanSchaftingen-2015}, we consider the Banach space
$$
\tilde X := \mathbb{R}\times X
$$
equipped with the standard product norm
$\|(\s, v)\|_{\tilde X}:=\left(|\s|^{2}+\|v\|_{X}
\right)^{1/2}$ for $\s \in\mathbb{R}$, $v \in X$. Moreover, we
define the continuous map
$$
\rho:\tilde X \rightarrow X, \qquad \rho(\s, v)[x] :=e^{2\s}
v\left(e^{\s}x\right) \qquad \text{for $\s \in\mathbb{R}$, $v \in X$ and $x \in \mathbb{R}^{2}$.}
$$
We also consider the functional
$$
\varphi:= I \circ \rho\::\: \tilde X \to \R.
$$
A short computation yields that
\begin{align}
  \varphi(\s, v)= I(\rho(\s, v))&=\frac{e^{4\s}}{2}\int_{\mathbb{R}^{2}}
    |\nabla v|^{2}dx+\frac{e^{2\s}}{2}\int_{\mathbb{R}^{2}}v^{2}dx
    +\frac{e^{4\s}}{4}\int_{\mathbb{R}^{2}}\int_{\mathbb{R}^{2}}
    \log\left(|x-y|\right)v^{2}(x)v^{2}(y)dxdy \nonumber\\
  &\quad-\frac{\s e^{4\s}}{4}\left(\int_{\mathbb{R}^{2}}|v|^{2}dx\right)^{2}
    -\frac{e^{2\s(p-1)}}{p}\int_{\mathbb{R}^{2}}|v|^{p}dx \qquad \text{for $\s \in\mathbb{R}$ and $v \in X$.} \label{comp-rho-s-v}
\end{align}
This readily implies that $\varphi$ is of class $C^1$ on $X$ with
\begin{align}
\partial_{\s}  \varphi(\s, v) &=2 e^{4\s}\int_{\mathbb{R}^{2}}
    |\nabla v|^{2}dx+ e^{2\s}\int_{\mathbb{R}^{2}}v^{2}dx
    +e^{4\s}\int_{\mathbb{R}^{2}}\int_{\mathbb{R}^{2}}
    \log\left(|x-y|\right)v^{2}(x)v^{2}(y)dxdy\nonumber\\
  &\quad-\Bigl(\s e^{4\s} + \frac{e^{4\s}}{4}\Bigr) \left(\int_{\mathbb{R}^{2}}|v|^{2}dx\right)^{2}
    -\frac{2(p-1)}{p}e^{2\s(p-1)}\int_{\mathbb{R}^{2}}|v|^{p}dx \nonumber\\
&=2 \int_{\mathbb{R}^{2}}
    |\nabla \rho(\s, v)|^{2}dx+ \int_{\mathbb{R}^{2}}\rho(\s, v)^{2}dx
    +\int_{\mathbb{R}^{2}}\int_{\mathbb{R}^{2}}
    \log\left(|x-y|\right)\rho(\s, v)^{2}(x)\rho(\s, v)^{2}(y)dxdy\nonumber \\
  &\quad-\frac{1}{4} \left(\int_{\mathbb{R}^{2}}|\rho(\s, v)|^{2}dx\right)^{2}
    -\frac{2(p-1)}{p}\int_{\mathbb{R}^{2}}|\rho(\s, v)|^{p}dx \nonumber\\
  &=J(\rho(\s, v)) \qquad \text{for $\s \in\mathbb{R}$ and $v \in X$}, \label{comp-partial-s}
\end{align}
where $J$ defined in (\ref{eq:def-G}). Moreover, since the map $v \mapsto \rho(\s, v)$ is linear for fixed $\s \in \R$, we have
\begin{equation}
  \label{eq:comp-partial-v}
\partial_{v}  \varphi(\s, v) w=I'(\rho(\s, v))\rho(\s, w) \qquad \text{for $\s \in\mathbb{R}$ and $v, w \in X$.}
\end{equation}
Next, we define the minimax value
\begin{equation*}
    c_{\ast}=\inf_{\tilde{\gamma }\in \tilde{\Gamma}}\max_{t \in [0,1]}
    \varphi(\tilde{\gamma}(t)),
\end{equation*}
where
\begin{equation*}
  \tilde{\Gamma}:=\left\{ \tilde{\gamma} \in C([0,1],\:\tilde X)\ |
  \ \tilde{\gamma} (0)=(0,0),\ \varphi(\tilde{\gamma}(1))<0 \right\}.
\end{equation*}
Since $\Gamma =\{\rho \circ \tilde{\gamma}\::\: \tilde{\gamma}\in \tilde{\Gamma}\}$, the
minimax values of $I$ and $\varphi$ coincide, i.e., $c_{mp}=c_{\ast}$. We will now apply Proposition \ref{prop3-2} to the functional $\varphi$, $M= [0,1]$, $M_0= \{0,1\}$ and $\tilde X$, $\tilde \Gamma$ in place of $X$, $\Gamma$. More precisely, for fixed $n \in \mathbb N$
we use the definition of $c_{mp}$ and choose $\gamma_{n}\in\Gamma$ with
\begin{equation*}
    \max_{t\in [0,1]} I(\gamma_{n}(t)) \leq c_{mp}+\frac{1}{n^{2}}.
\end{equation*}
We then define $\tilde \gamma_n \in \tilde \Gamma$ by $\tilde{\gamma}_{n}(t)=(0, \gamma_{n}(t))$,  and we note that
\begin{equation*}
   \max_{t\in[0,1]} \varphi(\tilde{\gamma}_{n}(t))=\max_{t\in[0,1]}
   I(\gamma_{n}(t))\leq c_{mp} + \frac{1}{n^{2}}.
\end{equation*}
An application of Proposition~\ref{prop3-2} with $\tilde \gamma_n$ in place of $\gamma$ and $\varepsilon=\frac{1}{n^{2}}$, $\delta=\frac{1}{n}$ now yields the existence of $(\s_{n},v_{n})\in \tilde X$ such that, as $n \to \infty$,
\begin{align}\label{3-3}
    &\varphi (\s_{n}, v_{n})\rightarrow c_{mp},\\
     &\|\varphi'(\s_{n}, v_{n})\|_{\tilde X'}
     \left(1+\|(\s_{n}, v_{n})\|_{\tilde X}\right)
     \rightarrow 0,\quad \label{3-4}\\
&{\rm dist}\bigl((\s_{n},v_{n}), \:\{0\} \times \gamma_{n}([0,1])\bigr)  \to 0,\label{3-5-0}
\end{align}
whereas (\ref{3-5-0}) obviously implies that
\begin{equation}
\label{3-5}
    \s_{n}\rightarrow 0.
\end{equation}
Since
\begin{equation}\label{3-6}
     \varphi'(\s_{n}, v_{n}) (h,w)
    =I'(\rho (\s_{n}, v_{n}))\rho (\s_{n}, w)
    + J(\rho (\s_{n}, v_{n}))h \qquad \text{for $(h,w) \in \tilde X$}
\end{equation}
by (\ref{comp-partial-s}) and (\ref{eq:comp-partial-v}), we may take $h=1$ and $w=0$ in \eqref{3-6} to obtain
\begin{equation}
\label{eq:G-zero-conv}
    J(\rho (\s_{n}, v_{n}))\rightarrow 0
    \qquad \text{as}\ n \rightarrow \infty.
\end{equation}
For $u_{n}:=\rho (\s_{n}, v_{n})$, it then follows from (\ref{3-3}) and (\ref{eq:G-zero-conv}) that
\begin{equation*}
   I(u_{n})\rightarrow c_{mp} \quad \text{and} \quad
   J(u_{n})\rightarrow 0 \qquad \text{as}\ n \rightarrow \infty.
\end{equation*}
Finally, for given $v \in X$ we consider $w_n =e^{-2 \s_{n}}v(e^{-\s_{n}} \cdot) \in X$
and deduce from (\ref{3-4}) and (\ref{3-6}) with $h=0$ that
\begin{equation*}
    \left(1+\|u_{n}\|_{X}\right)\left|I'(u_{n})v \right|= \left(1+\|u_{n}\|_{X}\right)\left|I'(u_{n})\rho(\s_n,w_n) \right|
= o(1)\|w_n \|_{X} \qquad \text{as $n \to \infty$,}
\end{equation*}
whereas by (\ref{3-5}) we have
\begin{align*}
\|w_n \|_{X}^2 &= \|w_n\|^2 + |w_n|_*^2 \\
&= e^{-4\s_n} \int_{\mathbb{R}^{2}} |\nabla v|^{2}dx+ e^{-2\s_n}\int_{\mathbb{R}^{2}}\Bigl(1+ \log(1+e^{2 \s_n}|x|)\Bigr) v^{2}dx\\
&= [1+o(1)] \int_{\mathbb{R}^{2}} |\nabla v|^{2}dx+ [1+o(1)] \int_{\mathbb{R}^{2}}\Bigl(1+ \log(1+|x|)\Bigr) v^{2}dx \\
&= \bigl(1+o(1)\bigr)\|v\|_{X}^2 \qquad \text{as $n \to \infty$}
\end{align*}
with $o(1) \to 0$ uniformly in $v \in X$. Combining the latter two estimates, we get that
$$
    \left(1+\|u_{n}\|_{X}\right)\|I'(u_{n})\|_{X'} \to 0 \qquad \text{as $n \to \infty$.}
$$
The proof is thus finished.
\end{proof}

In the following key proposition, we shall show, in particular, that any sequence $(u_{n})_n$
satisfying \eqref{3-2} is bounded in $H^{1}(\mathbb{R}^{2})$.

\begin{proposition}\label{lem3-4}
Let $p>2$, and $\{u_{n}\}$ be a sequence in $X$ such that
\begin{equation}\label{3-2-1}
c:= \sup_{n  \in \N}  I(u_{n})<\infty
  \qquad \text{and} \qquad \|I'(u_{n})\|_{X'}\left(1+\|u_{n}\|_{X}\right)\rightarrow 0,
  \quad J(u_{n}) \rightarrow 0 \quad \text{as $n\rightarrow\infty$.}
\end{equation}
Then $(u_{n})_n$ is bounded in $H^{1}(\mathbb{R}^{2})$.
\end{proposition}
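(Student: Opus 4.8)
The plan is to turn the three conditions in \eqref{3-2-1} into a system of scalar relations for the quantities
\[
a_n := \int_{\R^2}|\nabla u_n|^2\,dx, \qquad b_n := \int_{\R^2} u_n^2\,dx, \qquad P_n := |u_n|_p^p, \qquad V_n := V_0(u_n),
\]
and then to eliminate the quantity $V_n$. The latter is the crucial obstacle: since $V_0$ has no definite sign on $X$, we cannot simply discard it as in the higher-dimensional case. First I would note that the gradient bound in \eqref{3-2-1} gives $I'(u_n)u_n \to 0$, and by Lemma~\ref{lem2-2}(ii) one has the identity $I'(u_n)u_n = \|u_n\|^2 + V_0(u_n) - |u_n|_p^p = a_n + b_n + V_n - P_n$. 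Writing $I(u_n) = \tfrac12(a_n+b_n) + \tfrac14 V_n - \tfrac1p P_n$ and, from \eqref{eq:def-G}, $J(u_n) = 2a_n + b_n - \tfrac{2(p-1)}{p}P_n + V_n - \tfrac14 b_n^2$, all three functionals are expressed in the same four quantities.

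The next step is to form two combinations in which $V_n$ cancels. From $J(u_n) - I'(u_n)u_n \to 0$ I obtain
\begin{equation}
\tag{$\star$}
a_n = \frac{p-2}{p}P_n + \frac14 b_n^2 + o(1),
\end{equation}
and from $4I(u_n) - J(u_n) \le 4c + o(1)$, using $I(u_n) \le c$ and $J(u_n) \to 0$, I obtain
\begin{equation}
\tag{$\dagger$}
b_n + \frac{2(p-3)}{p}P_n + \frac14 b_n^2 \le 4c + o(1).
\end{equation}
These two relations, together with the Gagliardo--Nirenberg inequality $P_n = |u_n|_p^p \le C\, a_n^{(p-2)/2} b_n$ valid in $\R^2$, are the only ingredients I would use.

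In the case $p \ge 3$ the conclusion is quick. Since $\tfrac{2(p-3)}{p} \ge 0$ and $P_n \ge 0$, inequality $(\dagger)$ forces $b_n$ to be bounded. If $p > 3$, then $\tfrac{2(p-3)}{p}P_n$ is bounded as well, so $P_n$ is bounded, and $(\star)$ bounds $a_n$; hence $\|u_n\|^2 = a_n + b_n$ is bounded. If $p = 3$, I would instead insert the bound on $b_n$ into Gagliardo--Nirenberg to get $P_n \le C\, a_n^{1/2}$, and then $(\star)$ reads $a_n \le \tfrac13 C\, a_n^{1/2} + O(1)$, which again bounds $a_n$.

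The subtle case, and the one I expect to be the main obstacle, is $2 < p < 3$. Now $\tfrac{2(p-3)}{p} < 0$, so $(\dagger)$ no longer bounds $b_n$ on its own; it only yields the coupling $b_n^2 \le C(1 + P_n)$. I would argue by contradiction, assuming $\|u_n\|^2 = a_n + b_n \to \infty$ along a subsequence. One first sees that necessarily $P_n \to \infty$, for otherwise $(\dagger)$ and $(\star)$ would bound $b_n$ and $a_n$. Consequently $b_n = O(P_n^{1/2})$, and Gagliardo--Nirenberg gives $P_n \le C\, a_n^{(p-2)/2} b_n = O\!\bigl(a_n^{(p-2)/2} P_n^{1/2}\bigr)$, hence $P_n = O(a_n^{p-2})$; on the other hand $(\star)$ combined with $b_n^2 = O(P_n)$ gives $a_n = O(P_n)$, so that $P_n \ge c\, a_n$ for large $n$. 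Chaining these yields $c\, a_n \le P_n \le C\, a_n^{p-2}$, i.e. $a_n^{3-p} \le C/c$. Since $3 - p > 0$ this bounds $a_n$, contradicting $a_n \to \infty$ (which holds because $P_n \to \infty$ and $a_n \ge \tfrac{p-2}{p}P_n + o(1)$ by $(\star)$). The heart of the matter is exactly this interlocking pair of estimates $a_n = O(P_n)$ and $P_n = O(a_n^{p-2})$ with $p - 2 < 1$, which compensates for the absence of the usual Ambrosetti--Rabinowitz or Nehari control.
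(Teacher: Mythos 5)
Your proof is correct, and in the crucial range $2<p\le 3$ it takes a genuinely different route from the paper's. The shared part: your $(\dagger)$ is exactly the paper's starting estimate \eqref{3-7} (i.e. $I(u_n)-\tfrac14 J(u_n)\le c+o(1)$) multiplied by $4$, and your $(\star)$ also appears in the paper, but only at the very end of its argument and in rearranged form, $\tfrac14|u_n|_2^4+\tfrac{p-2}{p}|u_n|_p^p=|\nabla u_n|_2^2+I'(u_n)u_n-J(u_n)$, \emph{after} the gradient bound has already been secured. The difference lies in how that gradient bound is obtained when $2<p\le3$: the paper argues by contradiction through the rescaling $v_n(x)=t_n^2u_n(t_nx)$ with $t_n=|\nabla u_n|_2^{-1/2}$, and must then track the nonlocal term explicitly, using the logarithmic scaling identity $V_0(u_n)=t_n^{-4}\bigl(V_0(v_n)+|v_n|_2^4\log t_n\bigr)$, the splitting $V_0=V_1-V_2$, the positivity of $V_1$ and the Hardy--Littlewood--Sobolev bound \eqref{2-2} on $V_2$, ending in the impossible relation $2+V_1(v_n)=o(1)$. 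You bypass all of this: since both $(\star)$ and $(\dagger)$ are free of the nonlocal term, you never need $V_1$, $V_2$ or the scaling behaviour of the logarithm, and the contradiction for $2<p<3$ comes instead from the interlocking growth bounds $a_n=O(P_n)$ and $P_n=O\bigl(a_n^{p-2}\bigr)$, which give $a_n^{3-p}=O(1)$ against $a_n\to\infty$; your treatment of $p\ge3$ likewise simplifies the paper's Case 1, which again goes through $V_1$, $V_2$ and an interpolation inequality. What the paper's route exhibits is precisely where the logarithm's behaviour under the dilations $u\mapsto u_t$ enters (the same fibre maps reappear in Section~\ref{sec:proof-theorem-refth1}); what yours buys is a shorter, purely algebraic argument whose only analytic input is the Gagliardo--Nirenberg inequality applied to $u_n$ itself.
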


\begin{proof}
In the following, $C_1,C_2,\cdots$ denote positive constants independent of $n \in \mathbb N$.
We first observe from \eqref{3-2-1} that
\begin{equation}\label{3-7}
  c+o(1)  \ge I(u_{n})-\frac{1}{4}J(u_{n}) = \frac{1}{4}|u_{n}|_{2}^{2}+\frac{1}{16}|u_{n}|_{2}^{4}
     +\frac{p-3}{2p}|u_{n}|_{p}^{p}.
\end{equation}
We may then distinguish the following two cases:

\emph{Case 1: $p>3$}. In this case, \eqref{3-7} implies that $(u_{n})_n$
is bounded in $L^{2}(\mathbb{R}^{2})$ and in $L^{p}(\mathbb{R}^{2})$.
Therefore, by \eqref{2-2} we have
\begin{equation*}
    V_{2}(u_{n})\leq C_{0}|u_{n}|_{\frac{8}{3}}^{4}
    \leq C_{0}|u_{n}|_{2}^{4(1-\theta_{0})}|u_{n}|_{p}^{4\theta_{0}}
    \leq C_{1},
\end{equation*}
where $\theta_{0}=\frac{p}{4(p-2)}$. Consequently, we may use \eqref{3-2-1} again to estimate
\begin{equation*}
    2\|u_{n}\|^{2}+ V_{1}(u_{n}) = 4I(u_{n}) + V_{2}(u_{n})
       +\frac{4}{p}|u_{n}|_{p}^{p}\leq 4c +C_1 +\frac{4}{p}|u_{n}|_{p}^{p} \le C_{2}.
\end{equation*}
This implies that $\{u_{n}\}$ is bounded in $H^{1}(\mathbb{R}^{2})$.

\emph{Case 2: $2< p \leq 3$}.  We first claim that
\begin{equation}\label{3-8}
    |\nabla u_{n}|_{2} \leq C_3 \qquad \text{for $n \in \mathbb{N}$.}
\end{equation}
Suppose by contradiction that this is false.
Then, after passing to a subsequence, we have
\begin{equation}\label{3-10}
    |\nabla u_{n}|_{2}\rightarrow \infty\qquad \text{as}\
    n \rightarrow \infty.
\end{equation}
Let $t_{n}:=|\nabla u_{n}|_{2}^{-1/2}$ for $n \in \mathbb{N}$, so that $t_{n}\rightarrow0$ as $n\rightarrow\infty$.
For $n \in \mathbb N$ we define the rescaled functions $v_n \in X$ by $v_{n}(x):=t_{n}^{2}u_{n}(t_{n}x)$ for $n \in \mathbb{N}$,
so that
\begin{equation}
  \label{eq:normalized}
|\nabla v_{n}|_{2}=1 \quad \text{and}\quad  |v_n|_q^q = t_n^{2q-2}|u_n|_q^q \qquad \text{for all $n \in \mathbb N$, $1 \le q < \infty$.}
\end{equation}
By the Gagliardo-Nirenberg inequality,
\begin{equation}\label{3-11}
    |v_{n}|_{p}^{p}\leq C_{4} |v_{n}|_{2}^{2}|\nabla v_{n}|_{2}^{p-2}= C_4 |v_{n}|_{2}^{2} \qquad \text{for $n \in \mathbb N$}.
\end{equation}
Multiplying \eqref{3-7} by $t_{n}^{4}$, we deduce, from (\ref{eq:normalized}) and \eqref{3-11},
\begin{align*}
   c t_{n}^{4}+o(t_{n}^{4}) = \frac{t_n^4}{4}|u_{n}|_{2}^{2}+\frac{t_n^4}{16}|u_{n}|_{2}^{4}
     -\frac{3-p}{2p}t_n^4 |u_{n}|_{p}^{p}
&= \frac{t_{n}^{2}}{4}|v_{n}|_{2}^{2}
    +\frac{1}{16}|v_{n}|_{2}^{4}-\frac{3-p}{2p}t_{n}^{6-2p}|v_{n}|_{p}^{p}\\
   &\geq \frac{t_{n}^{2}}{4}|v_{n}|_{2}^{2}+\frac{1}{16}
    |v_{n}|_{2}^{4}-\frac{3-p}{2p}C_{4}t_{n}^{6-2p}|v_{n}|_{2}^{2}.
\end{align*}
Consequently,
\begin{equation}\label{3-12}
    |v_{n}|_{2}=
    \begin{cases}
    o(t_{n}^{1/2})      \qquad &\text{if}\ p=3,\\
    o(t_{n}^{(3-p)/2})  &\text{if}\ 2<p<3.
    \end{cases}
\end{equation}
Moreover, by assumption we also have that
\begin{equation*}
o(1)  = t_n^4 J(u_n)= t_n^4 \Bigl( 2|\nabla u_n|_2^2 + |u_{n}|_{2}^{2}+V_{0}(u_{n})-\frac{|u_{n}|_{2}^{4}}{4}-\frac{2p-2}{p}|u_{n}|_{p}^{p}\Bigr).
\end{equation*}
Combining this with (\ref{eq:normalized}), (\ref{3-12}) and the fact that
$$
V_{0}(u_{n})= t_n^4 \int_{\mathbb{R}^{2}}
     \int_{\mathbb{R}^{2}}\log \left(|t_n x-t_n y|\right)u_n^{2}(t_n x)u_n^{2}(t_n y)dxdy = t_n^{-4}\Bigl(V_0(v_n) +|v_n|_2^4\log t_n \Bigr)
$$
we infer that
\begin{align}
o(1) & = 2 + t_{n}^{2}|v_{n}|_{2}^{2}+V_{0}(v_{n})+
   |v_{n}|_{2}^{4}\log t_{n} - \frac{1}{4}|v_{n}|_{2}^{4}-\frac{2p-2}{p}
   t_{n}^{6-2p}|v_{n}|_{p}^{p} \nonumber\\
&=2+V_{0}(v_{n})+ o(1). \label{result:G-t_n}
\end{align}
Since $V_0 = V_1-V_2$, it now follows from (\ref{2-2}), (\ref{3-12}), (\ref{result:G-t_n}) and the Gagliardo-Nirenberg inequality that
\begin{equation*}
    2 + V_{1}(v_{n}) = V_{2}(v_{n})+o(1)
     \leq C_{0}|v_{n}|_{\frac{8}{3}}^{4}+o(1)
     \leq C_{5}|v_{n}|_{2}^{3}+o(1)=o(1).
\end{equation*}
Since $V_1$ is nonnegative, this is a contradiction. We thus conclude that \eqref{3-8} holds, and
using \eqref{3-2-1} again we may then deduce that
\begin{equation*}
     \frac{1}{4} |u_{n}|_{2}^{4} +  \frac{p-2}{p}|u_{n}|_{p}^{p}
     = |\nabla u_{n}|_{2}^{2} + I'(u_{n})u_{n} -  J(u_{n}) \leq C_{3} + o(1).
\end{equation*}
Consequently, $(u_{n})_n$ is bounded in $L^{2}(\mathbb{R}^{2})$, and together with (\ref{3-8}) this shows that $(u_{n})_n$ is bounded in $H^{1}(\mathbb{R}^{2})$, as claimed.
\end{proof}

We now define, for a function $u \in \mathbb{R}^{2} \rightarrow \mathbb{R}$
and $z \in \mathbb{R}^{2}$, the translated function
\begin{equation*}
    z \ast u\::\: \mathbb{R}^{2} \rightarrow \mathbb{R},\quad
    (z \ast u)(x)=u(x - z) \quad \text{for}\ x \in \mathbb{R}^{2}.
\end{equation*}
We then may derive the following compactness property (modulo translation) for the class of Cerami sequence satisfying (\ref{3-2-1}).
It is a variant of \cite[Proposition 3.1]{Cingolani-Weth-2016} based on Proposition~\ref{lem3-4}.

\begin{proposition}\label{prop3-5}
Let $p>2$, and let $(u_{n})_n$ be a sequence in $X$ that
satisfies \eqref{3-2-1}. Then, after passing to a subsequence,
one of the following occurs:
\begin{itemize}
\item[\rm(I)] $\|u_n\| \to 0$ and $I(u_n) \to 0$ as $n \to \infty$.
\item[\rm(II)] There exist points $y_{n} \in \mathbb{R}^{2}, n \in \mathbb{N}$
such that
\begin{equation*}
    y_{n} \ast u_{n} \rightarrow u \quad \text{strongly\ in}\ X
    \ \text{as}\ n \rightarrow \infty
\end{equation*}
for some nonzero critical point $u \in X$ of $I$.

\end{itemize}
\end{proposition}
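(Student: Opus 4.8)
The goal is a dichotomy for any sequence satisfying (\ref{3-2-1}): either it vanishes in norm (case I) or, after translation, it converges strongly in $X$ to a nontrivial critical point (case II). By Proposition~\ref{lem3-4} the sequence $(u_n)_n$ is bounded in $H^1(\mathbb{R}^2)$, so the natural first step is to invoke a concentration-compactness alternative for the $L^2$-mass (or equivalently, to examine $\sup_{z} \int_{B_1(z)} u_n^2$). I would split on whether this quantity tends to zero.

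**Step 1: the vanishing case leads to (I).** Suppose $\sup_{z \in \mathbb{R}^2}\int_{B_1(z)} u_n^2\,dx \to 0$. By P.-L.~Lions' vanishing lemma, the $H^1$-boundedness forces $u_n \to 0$ strongly in $L^s(\mathbb{R}^2)$ for every $s \in (2,\infty)$, in particular $|u_n|_p \to 0$ and $|u_n|_{8/3}\to 0$, whence $V_2(u_n) \to 0$ by (\ref{2-2}). Testing $I'(u_n)u_n \to 0$ (which follows from the second condition in (\ref{3-2-1})) gives $\|u_n\|^2 + V_0(u_n) = |u_n|_p^p + o(1) = o(1)$. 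Since $V_0 = V_1 - V_2$ with $V_1 \ge 0$ and $V_2(u_n)\to 0$, I would conclude $\|u_n\|^2 \le V_2(u_n) + o(1) = o(1)$, so $\|u_n\|\to 0$, and then $I(u_n)\to 0$ follows directly from the definition of $I$. This is case (I).

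**Step 2: the non-vanishing case produces a translated limit.** Otherwise, passing to a subsequence, there exist $y_n \in \mathbb{R}^2$ and $\delta > 0$ with $\int_{B_1(y_n)}u_n^2 \ge \delta$. Setting $\tilde u_n := y_n \ast u_n$, the $H^1$-norm is translation-invariant so $(\tilde u_n)_n$ stays bounded in $H^1$; after a further subsequence $\tilde u_n \rightharpoonup u$ weakly in $H^1$ and pointwise a.e., and the lower bound on the $L^2$-mass over $B_1(0)$ together with the compact local embedding guarantees $u \neq 0$. The delicate point here, already present in \cite{Cingolani-Weth-2016}, is that the $X$-norm is \emph{not} translation invariant, so I cannot immediately claim $(\tilde u_n)_n$ is bounded in $X$; the control of $|\tilde u_n|_*$ must be recovered. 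This is where Lemma~\ref{lem2-1} enters: using the boundedness of a quantity of the form $B_1(\tilde u_n^2,\tilde u_n^2)$ (supplied by the energy bound $I(u_n) \le c$ and the translation invariance of $I$, $V_0$, $V_1$) one obtains $\sup_n |\tilde u_n|_* < \infty$, so $(\tilde u_n)_n$ is in fact bounded in $X$ and $\tilde u_n \rightharpoonup u$ weakly in $X$.

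**Step 3: $u$ is a critical point and the convergence is strong.** That $u$ solves (\ref{1-5}), i.e.\ $I'(u) = 0$, follows by passing to the limit in $I'(\tilde u_n)\varphi \to 0$ for test functions $\varphi \in C_c^\infty(\mathbb{R}^2)$, where the convergence of the nonlocal term is handled by Lemma~\ref{lem2-7} (with the roles of the bounded sequences played by $\tilde u_n$ and the weak limit $u$). For the strong convergence $\tilde u_n \to u$ in $X$, I would test the difference: writing $I'(\tilde u_n)(\tilde u_n - u) \to 0$ and $I'(u)(\tilde u_n - u) = 0$, subtract to isolate $\|\tilde u_n - u\|^2$. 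The local terms $|\tilde u_n - u|_p$ vanish by the compact embedding of $X$ into $L^p$ (Lemma~\ref{lem2-2}(i)), and the nonlocal cross-terms are controlled again by Lemma~\ref{lem2-7}, yielding $\|\tilde u_n - u\| \to 0$. Finally, to upgrade $H^1$-convergence to $X$-convergence I would apply the second, quantitative part of Lemma~\ref{lem2-1}: since $B_1((\tilde u_n-u)^2,(\tilde u_n-u)^2) \to 0$ and $|\tilde u_n - u|_2 \to 0$, it gives $|\tilde u_n - u|_* \to 0$, completing the proof of (II).

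**Main obstacle.** The crux is Step~2--3: recovering $X$-boundedness and then $X$-convergence of the translated sequence despite the non-translation-invariance of $\|\cdot\|_X$. Everything hinges on the two halves of Lemma~\ref{lem2-1}, which are precisely designed to transfer control of the logarithmic bilinear form $B_1$ into control of the weighted seminorm $|\cdot|_*$; the $H^1$-boundedness furnished by Proposition~\ref{lem3-4} is what makes this machinery applicable in the range $2 < p < 4$, since in \cite{Cingolani-Weth-2016} that bound was available by different means only for $p \ge 4$.
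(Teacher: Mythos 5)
Your Steps 1 and 2 coincide with the paper's argument (vanishing via Lions' lemma, translation, and the first half of Lemma~\ref{lem2-1} to recover $X$-boundedness of $\tilde u_n = y_n \ast u_n$), and your subtraction device in Step 3 is close in spirit to the paper's expansion of $I'(\tilde u_n)(\tilde u_n - u)$. However, there is a genuine gap at the heart of Step 3: you assert $I'(\tilde u_n)\varphi \to 0$ and $I'(\tilde u_n)(\tilde u_n - u) \to 0$ as if the Cerami property of $(u_n)_n$ transferred automatically to the translated sequence. It does not. Since $I'(\tilde u_n)w = I'(u_n)\bigl((-y_n)\ast w\bigr)$ and $\|(-y_n)\ast w\|_X^2 \le \bigl(1+\log(1+|y_n|)\bigr)\|w\|_X^2$ is the best generic bound, the operator norm of translation on $X$ blows up when $|y_n| \to \infty$, so $\|I'(u_n)\|_{X'} \to 0$ alone gives nothing. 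The paper closes exactly this hole with the estimates \eqref{3-15}--\eqref{3-16}: the non-vanishing property \eqref{3-13} yields $|u_n|_*^2 \ge C_1 \log(1+|y_n|)$, hence $\|(-y_n)\ast u\|_X \le C \|u_n\|_X$, and then the \emph{full} Cerami normalization $\|I'(u_n)\|_{X'}\bigl(1+\|u_n\|_X\bigr) \to 0$ in \eqref{3-2-1} (not merely $\|I'(u_n)\|_{X'} \to 0$) is what makes $I'(\tilde u_n)(\tilde u_n - u)$ and, later, $I'(\tilde u_n)v$ vanish in the limit. This step is the sole reason the proposition is stated for Cerami-type sequences, and your proposal never engages with it.

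Two further points. First, your ordering (criticality of $u$ first, via test functions, then strong convergence) does not go through: to pass to the limit in $I'(\tilde u_n)\varphi$ one must control a remainder of the form $B_1\bigl(u\varphi, (\tilde u_n - u)^2\bigr)$, which Lemma~\ref{lem2-7} does \emph{not} cover and which is bounded only by a multiple of $|\tilde u_n - u|_*^2$ --- precisely the quantity whose vanishing is the content of strong $X$-convergence, not yet available at that stage. The paper avoids this by proving strong convergence first: in the expansion of $I'(\tilde u_n)(\tilde u_n - u)$ the analogous term is $B_1\bigl(\tilde u_n^2, (\tilde u_n - u)^2\bigr) \ge 0$, and its sign, together with weak lower semicontinuity of the norm, forces both it and $\|\tilde u_n\|^2 - \|u\|^2$ to vanish; criticality of $u$ is then deduced at the very end from the translation estimate above and the continuity of $I'$ on $X$. (Relatedly, your identity $I'(u)(\tilde u_n - u)=0$ presupposes the criticality you are trying to prove; it should be replaced by $I'(u)(\tilde u_n - u) \to 0$, which holds simply because $\tilde u_n \rightharpoonup u$ weakly in $X$ and $I'(u) \in X'$.) Second, a smaller slip: the second part of Lemma~\ref{lem2-1} requires the first sequence to converge a.e.\ to a \emph{nonzero} limit, so it must be applied to $B_1\bigl(\tilde u_n^2, (\tilde u_n - u)^2\bigr) \to 0$, not to $B_1\bigl((\tilde u_n - u)^2,(\tilde u_n - u)^2\bigr)$ as you wrote; with your choice the hypothesis of the lemma fails, since $\tilde u_n - u \to 0$ a.e.
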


\begin{proof}
We first note that $(u_n)_n$ is bounded in $H^1(\R^2)$ by Proposition~\ref{lem3-4}. Suppose that (I) does not hold for any subsequence of $(u_n)_n$. We then claim that
\begin{equation}\label{3-13}
    \liminf_{n\rightarrow\infty}\sup_{y \in \mathbb{R}^{2}}
    \int_{B_{2}(y)} u_{n}^{2}(x)dx>0.
\end{equation}
Assuming the contrary that \eqref{3-13} does not occur. By Lions' vanishing
lemma (see e.g. \cite{Lions-1984,Willem-1996}), after passing to a subsequence,
it follows that
\begin{equation*}
    u_{n} \rightarrow 0\quad \text{in}\  L^{s}(\mathbb{R}^{2}) \quad
    \text{for\ all}\ s>2.
\end{equation*}
Therefore, by \eqref{2-2} and \eqref{3-2-1} we have
\begin{equation*}
    \|u\|_{n}^{2} + V_{1}(u_{n})=I'(u_{n})u_{n}+V_{2}(u_{n}) + |u_{n}|_{p}^{p}
    \rightarrow 0 \qquad \text{as}\ n \rightarrow \infty.
\end{equation*}
Hence, we obtain $\|u_{n}\| \rightarrow 0$ and $V_{1}(u_{n}) \rightarrow 0$,
and thus
\begin{equation*}
    I(u_{n})=\frac{1}{2}\|u_{n}\|^{2}+\frac{1}{4}\left(V_{1}(u_{n}) -
    V_{2}(u_{n})\right)-\frac{1}{p}|u_{n}|_{p}^{p} \rightarrow 0\qquad
    \text{as}\ n \rightarrow \infty.
\end{equation*}
This contradicts our assumption that (I) does not hold for any such subsequence.
So, \eqref{3-13} holds. Going if necessary to a subsequence,
there exists a sequence $\{y_{n}\} \subset \mathbb{R}^{2}$ such that,
the sequence of the functions
\begin{equation*}
    \tilde{u}_{n}:= y_{n} \ast u_{n} \in X \quad \text{with}
    \ n \in \mathbb{N},
\end{equation*}
converges weakly in $H^{1}(\mathbb{R}^{2})$ to some function $u \in H^{1}
(\mathbb{R}^{2}) \setminus \{0\}$. Consequently, we may assume that
$\tilde{u}_{n}(x)\rightarrow u(x)$ a.e. in $\mathbb{R}^{2}$.
Moreover, using \eqref{3-2-1} again, we deduce that
\begin{equation*}
    B_{1}(\tilde{u}_{n}^{2},\tilde{u}_{n}^{2})= V_{1}(\tilde{u}_{n})
    =V_{1}(u_{n})=o(1)+V_{2}(u_{n}) + |u_{n}|_{p}^{p}-\|u_{n}\|^{2},
\end{equation*}
and the RHS of this inequality remains bounded in $n$. Thus, Lemma \ref{lem2-1}
implies that $|\tilde{u}_{n}|_{\ast}$ remains bounded in $n$, so that the
sequence $\{\tilde{u}_{n}\}$ is bounded in $X$. Then, passing to a subsequence
again if necessary, we may assume that $\tilde{u}_{n} \rightharpoonup u$
weakly in $X$, so that $u \in X$. It then follows from Lemma \ref{lem2-2}(i)
that $\tilde{u}_{n} \rightarrow u$ strongly in $L^{s}(\mathbb{R}^{2})$
for $s \geq 2$. Next, we claim that
\begin{equation}\label{3-14}
    I'(\tilde{u}_{n})(\tilde{u}_{n}-u) \rightarrow 0 \qquad \text{as}\ n\rightarrow\infty.
\end{equation}
Indeed, we have
\begin{equation}\label{3-15}
    \left|I'(\tilde{u}_{n})(\tilde{u}_{n}-u)\right|=\left|I'(u_{n})
    \left(u_{n} - (-y_{n})\ast u\right)\right| \leq \|I'(u_{n})\|_{X'}
    \left(\|u_{n}\|_{X}+\|(-y_{n})\ast u\|_{X}\right)
\end{equation}
for every $n$. Moreover, we can easily see that
\begin{equation*}
    |u_{n}|_{\ast}^{2} = \int_{\mathbb{R}^{2}} \log \left(1+|x-y_{n}|\right)
     \tilde{u}_{n}^{2}(x)dx \geq C_{1}\log \left(1+|y_{n}|\right)\qquad
     \text{for\ all}\ n
\end{equation*}
with a constant $C_{1}>0$ and
\begin{equation*}
    \left|(-y_{n})\ast u\right|_{\ast}^{2} = \int_{\mathbb{R}^{2}}
      \log \left(1+|x-y_{n}|\right)u^{2}(x)dx \leq
      C_{2}\log \left(1+|y_{n}|\right)\qquad \text{for\ all}\ n
\end{equation*}
with a constant $C_{2}>0$. Combining these two inequalities with \eqref{3-13}, we then find a constant $C_{3}>0$ such that, after passing to a subsequence,
\begin{equation}\label{3-16}
  \left\|(-y_{n})\ast u\right\|_{X}^2 = \|u\|^2 + \left|(-y_{n})\ast u\right|_{\ast}^{2}
\leq C_{3} (\|u_n\|^2 +|u_n |_{\ast}^{2})= C_3 \|u_{n}\|_{X}^2
\end{equation}
for\ all $n \in \N$. Hence \eqref{3-15} implies that
\begin{equation*}
    \left|I'(\tilde{u}_{n})(\tilde{u}_{n}-u)\right| \leq
    (1+\sqrt{C_{3}}) \|I'(u_{n})\|_{X'}\|u_{n}\|_{X}\rightarrow 0
    \qquad \text{as}\ n \rightarrow \infty,
\end{equation*}
as claimed in \eqref{3-14}. Using \eqref{3-14}, we conclude that
\begin{align*}
    o(1) &= I'(\tilde{u}_{n})(\tilde{u}_{n}-u)\\
     &=o(1)+\|\tilde{u}_{n}\|^{2}-\|u\|^{2}+\frac{1}{4}V'_{0}(\tilde{u}_{n})
      (\tilde{u}_{n}-u)-\int_{\mathbb{R}^{2}}|\tilde{u}_{n}|^{p-2}
      \tilde{u}_{n}(\tilde{u}_{n}-u)dx\\
     &=o(1)+\|\tilde{u}_{n}\|^{2}-\|u\|^{2}+\frac{1}{4}\left [V'_{1}(\tilde{u}_{n})
      (\tilde{u}_{n}-u)- V'_{2}(\tilde{u}_{n})(\tilde{u}_{n}-u)\right],
\end{align*}
where
\begin{equation*}
    \left|\frac{1}{4}V'_{2}(\tilde{u}_{n})(\tilde{u}_{n}-u)\right| = \left|B_{2}
    \left(\tilde{u}_{n}^{2}, \tilde{u}_{n}(\tilde{u}_{n}-u)\right)\right|
    \leq |\tilde{u}_{n}|_{\frac{8}{3}}^{3}\left|\tilde{u}_{n}-u
    \right|_{\frac{8}{3}}\rightarrow 0 \qquad \text{as}\ n\rightarrow\infty,
\end{equation*}
and
\begin{equation*}
    \frac{1}{4}V'_{1}(\tilde{u}_{n})(\tilde{u}_{n}-u) = B_{1}\left(\tilde{u}_{n}^{2},
    \tilde{u}_{n}(\tilde{u}_{n}-u)\right)=B_{1}\left(\tilde{u}_{n}^{2},
    (\tilde{u}_{n}-u)^{2}\right)+B_{1}\left(\tilde{u}_{n}^{2},
    u(\tilde{u}_{n}-u)\right)
\end{equation*}
with
\begin{equation*}
    B_{1}\left(\tilde{u}_{n}^{2}, u(\tilde{u}_{n}-u)\right) \rightarrow 0
    \qquad \text{as}\ n\rightarrow\infty
\end{equation*}
by Lemma \ref{lem2-7}. Combining these estimates, we infer that
\begin{equation*}
    o(1) = \|\tilde{u}_{n}\|^{2}-\|u\|^{2}+ B_{1}\left(\tilde{u}_{n}^{2},
    (\tilde{u}_{n}-u)^{2}\right)+o(1)\geq\|\tilde{u}_{n}\|^{2}-\|u\|^{2}+o(1),
\end{equation*}
which implies that $\|\tilde{u}_{n}\| \rightarrow \|u\|$ and
$B_{1}\left(\tilde{u}_{n}^{2}, (\tilde{u}_{n}-u)^{2}\right) \rightarrow 0$
as $n \rightarrow \infty$. Therefore,  $\|\tilde{u}_{n}-u\|\rightarrow0$
as $n \rightarrow \infty$. Moreover, by Lemma \ref{lem2-1} we have
$|\tilde{u}_{n} - u|_{\ast}\rightarrow0$. We thus deduce that
$\|\tilde{u}_{n} - u \|_{X} \rightarrow 0$ as $n \rightarrow \infty$,
as claimed.

Finally, we need to show that $I'(u) = 0$. Let $v \in X$. Then,
by the same argument which leads to \eqref{3-16}, we obtain
\begin{equation*}
    \left\|(-y_{n})\ast v\right\|_{X} \leq C_{4} \|u_{n}\|_X \qquad
    \text{for\ all}\ n
\end{equation*}
with a constant $C_{4}>0$. So,  from \eqref{3-2-1} we deduce that
\begin{align*}
    |I'(u)v|& = \lim_{n \rightarrow \infty} |I'(\tilde{u}_{n})v|
     =\lim_{n \rightarrow \infty}\left|I'(u_{n})\left[(-y_{n})
     \ast v\right]\right| \\
    &\leq  \lim_{n \rightarrow \infty}\|I'(u_{n})\|_{X'}
     \|(-y_{n})\ast v\|_{X} \leq C_{4}\lim_{n \rightarrow \infty}
     \|I'(u_{n})\|_{X'}\|u_{n}\|_{X} =0.
\end{align*}
This completes the proof.
\end{proof}

\begin{proof}[Proof of Theorem \ref{th1-1}]
By Lemma~\ref{lem3-3} and Proposition~\ref{prop3-5}, there exists a critical point $u \in X \setminus \{0\}$ of $I$ with $I(u) = c_{mp}$, which already completes the proof of Theorem~\ref{th1-1}(i). In particular, the set
\begin{equation*}
  \mathcal{K}=\{u \in X \backslash \{0\}\::\: I'(u)=0 \}
\end{equation*}
is nonempty. Let $(u_n)_n \subset \mathcal{K}$ be a sequence such that
$$
I(u_n) \to c_{g}= \inf_{u \in \mathcal{K}}I(u) \ \in [-\infty,c_{mp}].
$$
By definition of $\mathcal{K}$ and Lemma~\ref{lem2-4}, the sequence $(u_n)_n$ satisfies (\ref{3-2-1}). Moreover, by
(\ref{2-5-1}) we have
$$
\liminf_{n \to \infty} \|u_n\|\geq \rho>0
$$
Consequently, by Proposition~\ref{prop3-5} there exist, after passing to a subsequence,
points $x_{n} \in \mathbb{R}^{2}, n \in \mathbb{N}$ and a nonzero critical point $u \in X$ of $I$ such that
\begin{equation*}
    x_{n} \ast {u}_{n} \rightarrow u \quad \text{strongly\ in}
    \ X \ \text{as}\ n \rightarrow \infty
\end{equation*}
So $u \in \mathcal{K}$ and
$$
I (u)=\lim_{n \to \infty} I(x_{n} \ast {u}_{n}) = \lim_{n \to \infty} I({u}_{n})=c_{g}.
$$
In particular we have $c_{g}>-\infty$, and $u$ has the properties asserted in Theorem \ref{th1-1}(ii).
\end{proof}

\section{Proof of Theorem \ref{th1-1-1}}
\label{sec:proof-theorem-refth1}

\indent

In this section, we will give the proof of Theorem \ref{th1-1-1}. For this we need to elaborate the saddle point structure of the functional $I$ in the case $p \ge 3$. We need the following lemma.
\begin{lemma}\label{lem4-1}
Let $C_{i} \in \R$, $C_i>0$ for $i=1,3,4$, and let $C_{2} \in \R$. If $p\geq3$, then the function
\begin{equation*}
f:(0,\infty) \to \R,\qquad f(t)=C_{1}t^{2}+C_{2}t^{4}-C_{3}t^{4}\log t-C_{4}t^{2p-2}
\end{equation*}
has a unique positive critical point $t_0$ such that $f'(t)>0$ for $t<t_0$ and $f'(t)>0$ for $t>0$.
\end{lemma}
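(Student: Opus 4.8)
The plan is to reduce the claim to a sign analysis of $f'$ and, since $t>0$ throughout, to work instead with $g(t):=f'(t)/t$, which has the same sign as $f'$ on $(0,\infty)$. A direct differentiation gives
\[
g(t) = 2C_{1} + (4C_{2}-C_{3})t^{2} - 4C_{3} t^{2}\log t - 2(p-1)C_{4} t^{2p-4}.
\]
Since $p\ge 3$ forces $2p-4\ge 2>0$, every nonconstant term vanishes as $t\to 0^{+}$, so $g(0^{+})=2C_{1}>0$; and as $t\to\infty$ the term $-4C_{3}t^{2}\log t$ (if $p=3$) or $-2(p-1)C_{4}t^{2p-4}$ (if $p>3$) dominates, so $g(t)\to-\infty$. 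Thus $g$ has at least one zero, and it remains to show it has exactly one, at which $g$ changes sign from $+$ to $-$; equivalently, that $f$ increases and then decreases, with $t_{0}$ as its global maximum (this is the evident intended reading of the statement).

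The key step is to establish that $g$ is unimodal, i.e. strictly increasing and then strictly decreasing. For this I would apply the same device once more: writing $h(t):=g'(t)/t$, which has the sign of $g'$, a computation yields
\[
h(t) = (8C_{2}-6C_{3}) - 8C_{3}\log t - 4(p-1)(p-2)C_{4}\, t^{2p-6}, \qquad h'(t) = -\frac{8C_{3}}{t} - 4(p-1)(p-2)(2p-6)C_{4}\, t^{2p-7}.
\]
Here is precisely where the hypothesis $p\ge 3$ enters: it guarantees $2p-6\ge 0$ and $(p-1)(p-2)>0$, so the second term of $h'$ is nonpositive while $-8C_{3}/t<0$; hence $h'(t)<0$ on all of $(0,\infty)$ and $h$ is strictly decreasing. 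Since $h(0^{+})=+\infty$ (the logarithm dominates) and $h(t)\to-\infty$, the function $h$ has a unique zero $t_{1}$, so that $g'>0$ on $(0,t_{1})$ and $g'<0$ on $(t_{1},\infty)$, which is exactly the asserted unimodality of $g$.

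Finally I would assemble the pieces: since $g$ increases on $(0,t_{1})$ starting from $g(0^{+})=2C_{1}>0$, it remains positive up to $t_{1}$, and then decreases monotonically to $-\infty$ on $(t_{1},\infty)$; hence $g$ has a unique zero $t_{0}\in(t_{1},\infty)$ with $g>0$ on $(0,t_{0})$ and $g<0$ on $(t_{0},\infty)$. As $f'(t)=t\,g(t)$ with $t>0$, the function $f$ has the unique positive critical point $t_{0}$, with $f'>0$ on $(0,t_{0})$ and $f'<0$ on $(t_{0},\infty)$, as claimed. I expect the main obstacle to be the bookkeeping of the double reduction by dividing by $t$ and the clean identification of the role of $p\ge 3$: for $2<p<3$ one has $2p-6<0$, the power term in $h'$ becomes \emph{positive}, and the monotonicity of $h$ — and hence the whole argument — breaks down, which is consistent with the restriction imposed in the hypothesis.
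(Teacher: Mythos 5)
Your proof is correct. Note that the paper itself gives no argument here---its proof of Lemma~\ref{lem4-1} reads ``The proof is elementary, so we omit it''---so there is nothing to compare against; your double reduction (passing to $g(t)=f'(t)/t$ and then to $h(t)=g'(t)/t$, where $p\ge 3$ makes $h$ strictly decreasing, hence $g$ unimodal with $g(0^+)=2C_1>0$, hence $g$ has exactly one sign change) is a clean and complete way to supply the omitted details, and it correctly isolates where $p\ge 3$ is needed. You also read the statement the intended way: the second ``$f'(t)>0$ for $t>0$'' in the lemma is a typo for ``$f'(t)<0$ for $t>t_0$'', which is the property actually used in \eqref{eq:monotonicity} in the proof of Lemma~\ref{lem4-2}.
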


\begin{proof}
The proof is elementary, so we omit it.
\end{proof}

Similarly as in \cite{Ruiz-2006}, we now consider the functional $J$ defined in (\ref{eq:def-G}) and set $\mathcal{M}$ defined in (\ref{eq:def-M}), i.e.,
\begin{equation*}
    \mathcal{M}=\left\{u \in X \backslash \{0\}: J(u)=0\right\},
\end{equation*}
We then have
\begin{equation}\label{4-1}
    J(u)=2I'(u)u-P(u),
\end{equation}
where $P(u)$ is given in Lemma~\ref{lem2-4}. As already noted in the introduction, it follows from Lemma~\ref{lem2-4} that every critical point of $I$ is contained in $\mathcal{M}$: Indeed this is true for arbitrary $p>2$.

In the following, for $u\in X$ and $t>0$, we set $Q(t,u):=u_t \in X \backslash \{0\}$, i.e.,
$$
Q(t,u)(x):= u_t(x)=t^{2}u(tx) \qquad \text{for $x \in \R^2$.}
$$

\begin{lemma}\label{lem4-2}
Let $p \geq 3$.
\begin{itemize}
\item[\rm(i)] For any $u \in X \backslash \{0\}$,
there exists a unique $t_u > 0$ such that
$Q(t_u, u)  \in \mathcal{M}$.
\item[\rm(ii)] For any $u \in X \backslash \{0\}$, $t_u$ is the unique maximum point of the function $(0,\infty) \to \R$, $t \mapsto
I(Q(t,u))$.
\item[\rm(iii)] The map $X \backslash \{0\} \to (0,\infty)$, $u \mapsto t_u$ is continuous.
\item[\rm(iv)] Every $u \in \mathcal{M}$ with $I(u)=c_{\mathcal M}$ is a critical point of $I$ which does not change sign on $\R^2$.
\end{itemize}
\end{lemma}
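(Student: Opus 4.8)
\emph{Parts (i) and (ii).} For fixed $u \in X\setminus\{0\}$ I would set $f(t):=I(Q(t,u))=I(u_t)$ and use the computation in the proof of Lemma~\ref{lem2-6} to write
\[
f(t)=C_1 t^2 + C_2 t^4 - C_3 t^4 \log t - C_4 t^{2p-2},
\]
with $C_1=\tfrac12|u|_2^2$, $C_2=\tfrac12|\nabla u|_2^2+\tfrac14 V_0(u)$, $C_3=\tfrac14|u|_2^4$ and $C_4=\tfrac1p|u|_p^p$, so that $C_1,C_3,C_4>0$ while $C_2$ may have either sign because $V_0$ is not sign-definite. Differentiating the formula \eqref{comp-partial-s} (with $t=e^{\s}$) yields the key identity $\frac{d}{dt}I(u_t)=\tfrac1t J(u_t)$, i.e.\ $J(u_t)=t f'(t)$. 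Hence $Q(t,u)\in\mathcal M$ if and only if $t$ is a positive critical point of $f$, and since $p\ge3$ these coefficients are exactly those in Lemma~\ref{lem4-1}, which produces a unique such $t=:t_u$ that is moreover the unique maximum of $f$ on $(0,\infty)$. This gives (i) and (ii) at once.

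\emph{Part (iii).} I would argue by compactness. Given $u_n\to u$ in $X\setminus\{0\}$, the coefficients $C_i(u_n)$ converge to $C_i(u)$ (using continuity on $X$ of $u\mapsto|u|_2,|u|_p,|\nabla u|_2$ and of $V_0$, cf.\ Lemma~\ref{lem2-2}), while $C_1,C_3,C_4$ stay bounded away from $0$; standard estimates on the location of the maximum of $f$ then give uniform bounds $0<a\le t_{u_n}\le b<\infty$. Passing to a subsequence $t_{u_n}\to t_*\in(0,\infty)$, continuity of $(t,v)\mapsto J(v_t)$ forces $J(u_{t_*})=0$, so $t_*=t_u$ by the uniqueness in (i); as every subsequence has a further subsequence with this same limit, $t_{u_n}\to t_u$.

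\emph{Part (iv), the heart of the matter.} Let $u\in\mathcal M$ with $I(u)=c_{\mathcal M}$; since $u\in\mathcal M$ we have $t_u=1$. The plan is to use Lagrange multipliers, and the crux is to check that $u$ is a regular point of the constraint, i.e.\ $J'(u)\ne0$. Writing $Du:=\frac{d}{dt}\big|_{t=1}u_t=2u+x\cdot\nabla u$ and using $J(u_t)=tf'(t)$ together with $f'(1)=0$, I get $J'(u)[Du]=\frac{d}{dt}\big|_{t=1}J(u_t)=f'(1)+f''(1)=f''(1)$. A direct computation of $f''$ at its critical point, eliminating the logarithmic term via $f'(t_u)=0$, gives
\[
f''(t_u)=-4C_1-4C_3 t_u^2-2(2p-2)(p-3)\,C_4\, t_u^{2p-4}<0,
\]
the strict negativity holding \emph{precisely because} $p\ge3$ (the last term is then $\le0$ and $C_1,C_3>0$). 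Thus $J'(u)[Du]=f''(1)<0$, in particular $J'(u)\ne0$, so there is $\lambda\in\R$ with $I'(u)=\lambda J'(u)$. Pairing with $Du$ and using $I'(u)[Du]=\frac{d}{dt}\big|_{t=1}I(u_t)=J(u)=0$ yields $0=\lambda f''(1)$, whence $\lambda=0$ and $I'(u)=0$.

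Finally, to show $u$ does not change sign: since $|\nabla|u||=|\nabla u|$ a.e.\ and $V_0$, $|\cdot|_p$, $|\cdot|_2$ all depend only on $|u|$, we have $I(|u|)=I(u)$ and $J(|u|)=J(u)$, so $|u|\in\mathcal M$ is also a minimizer and hence, by the argument just given, a nonnegative critical point, i.e.\ a classical solution by Remark~\ref{sec:regularity}. Rewriting \eqref{1-5} for $v:=|u|\ge0$ as $-\Delta v+c(x)v=0$ with $c:=1+(\log(|\cdot|)\ast v^2)-v^{p-2}$ locally bounded, the strong maximum principle (applied after adding a large constant to $c$ on each ball) forces $v>0$ on all of $\R^2$; thus $u$ is continuous and nowhere zero on the connected set $\R^2$ and so has constant sign. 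I expect the main obstacle to be exactly Part (iv): because $V_0$ is not sign-definite the coefficient $C_2$ is uncontrolled, so everything hinges on the explicit sign computation $f''(t_u)<0$, and this is the single place where the hypothesis $p\ge3$ is genuinely used.
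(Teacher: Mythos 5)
Parts (i)--(iii) of your proposal are correct and essentially coincide with the paper's argument: the paper also reduces to the explicit fibre function via Lemma~\ref{lem4-1} and the identity $\frac{d}{dt}I(u_t)=J(u_t)/t$; for (iii) the paper argues via the strict sign change of $h_u'$ at $t_u$ rather than your compactness/uniqueness argument, but both are valid. Your computation $f''(t_u)=-4C_1-4C_3t_u^2-2(2p-2)(p-3)C_4t_u^{2p-4}<0$ for $p\ge 3$ is also correct (it is what underlies Lemma~\ref{lem4-1}).

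The gap is in part (iv), precisely at the step you call the crux. Your Lagrange multiplier argument requires pairing $J'(u)$ and $I'(u)$ with the scaling direction $Du=2u+x\cdot\nabla u$ and invoking the chain rule $J'(u)[Du]=\frac{d}{dt}\big|_{t=1}J(u_t)$. For a general $u\in X$ this is not legitimate: the curve $t\mapsto u_t$ need not be differentiable in $X$, and $Du$ need not even lie in $L^2(\R^2)$, let alone $X$. For instance, a smooth function behaving like $|x|^{-3/2}\sin|x|$ at infinity belongs to $X$, yet $|x\cdot\nabla u|^2\sim |x|^{-1}\cos^2|x|$ is not integrable on $\R^2$. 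Crucially, at this point of the argument the minimizer $u$ is only known to lie in $\mathcal M$; it is \emph{not} yet known to be a critical point, so the regularity and exponential decay of Remark~\ref{sec:regularity} (which would make $Du$ admissible) cannot be invoked without circularity. Hence neither the verification $J'(u)\neq 0$ nor the conclusion $\lambda=0$ is justified as written. The gap is in principle repairable --- if $J'(u)=0$, or if $I'(u)=\lambda J'(u)$ with $\lambda\neq 0$, then $u$ weakly solves an elliptic equation of the same type as \eqref{1-5}, and after extending the regularity/decay theory and proving a Pohozaev identity for \emph{that} equation one would contradict $f''(1)<0$ --- but this is substantial extra work that your proof omits. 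The paper's proof of (iv) is designed exactly to avoid this issue: assuming $I'(u)v<0$ for some $v\in X$, it uses only the $C^1$-continuity of $I$ (a uniform descent estimate) together with part (iii) to build the competitor $Q(t^\tau,u+\tau v)\in\mathcal M$ satisfying $I(Q(t^\tau,u+\tau v))<c_{\mathcal M}$, a contradiction; no differentiation along the fibre, and no multiplier rule, is ever needed. Your concluding maximum-principle argument for the constancy of sign is fine and matches the paper's.
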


\begin{proof}
For $u \in X \setminus \{0\}$, consider the function $h_u: (0,\infty) \to \R$, $
h_u(t):=I(Q(t,u))$. As in \eqref{comp-rho-s-v}, we see that
\begin{align}
  h_u(t)&=\frac{t^{4}}{2}\int_{\mathbb{R}^{2}}|\nabla u|^{2}dx
    +\frac{t^{2}}{2}\int_{\mathbb{R}^{2}}u^{2}dx+\frac{t^{4}}{4}
    \int_{\mathbb{R}^{2}}\int_{\mathbb{R}^{2}}
    \log\left(|x-y|\right)u^{2}(x)u^{2}(y)dxdy \label{eq:def-h}\\
  &\quad-\frac{t^{4}\log t}{4}\left(\int_{\mathbb{R}^{2}}| u|^{2}dx\right)^{2}
    -\frac{t^{2 p-2}}{p}\int_{\mathbb{R}^{2}}|u|^{p}dx.\nonumber
\end{align}
By Lemma \ref{lem4-1}, $h$ has a unique critical point
$t_u>0$ such that
\begin{equation}
  \label{eq:monotonicity}
h_u'(t)>0 \quad \text{for $t \in (0,t_u)$} \qquad \text{and}\qquad h_u'(t)<0 \quad \text{for $t >t_u$.}
\end{equation}
Similarly as in (\ref{comp-partial-s}), it also follows that $h_u'(t)=  \frac{J(Q(t,u))}{t}$ for $t>0$.
This gives (i) and (ii). Combining (\ref{eq:monotonicity}) with the fact that the map $X \setminus \{0\} \to \R, \:
u \mapsto h_u'(t)$ is continuous for fixed $t>0$, we also deduce that the map $X \backslash \{0\} \to (0,\infty)$, $u \mapsto t_u$ is continuous, as claimed in (iii).

It thus remains to prove (iv). Let $u \in \mathcal M$ be an arbitrary minimizer for $I$ on
$\mathcal M$. To show that $u$ is a critical point of $I$, we argue
by contradiction and assume that there exists $v \in X$
such that $I'(u)v < 0$. Since $I$ is a $C^1$-functional on $X$, we may then fix $\eps>0$ with
the following property: \medskip

{\em For every $\tau \in (0,\eps)$, every $w \in X$ with
  $\|w\|_{X}<\eps$ and every $\tilde v \in X$ with $\|\tilde v - v\|_X<\eps$
we have
  \[I(u+w+\tau \tilde v) \le I(u +w)-\eps \tau.\]}
\hspace{-1ex}Using (iii) and the fact that $t_u=1$, we may then choose $\tau \in (0,\eps)$ sufficiently small such that for $t^\tau:= t_{u+\tau v}$ we have
$$
|t^\tau-1| <\frac{\eps}{\|u\|_X} \qquad \text{and}\qquad  \|Q(t^\tau,v)- v\|_X < \eps.
$$
Setting $w:= Q(t^\tau,u)-u$ and $\tilde v:= Q(t^\tau,v)$, we then have $\|w\|_{X}<\eps$ and $\|\tilde v - v\|_X<\eps$, so that, by the property above,
\begin{align*}
I\bigl(Q(t^\tau,u + \tau v)\bigr)= I\bigl(Q(t^\tau,u) + \tau Q(t^\tau,v)\bigr)&= I(u + w + \tau \tilde v) \le I(u+w) - \eps \tau \\
& < I(u+w)= I(Q(t^\tau,u)) \le I(u)= c_{\mathcal M}.
\end{align*}
Since $Q(t^\tau,u + \tau v) \in \mathcal{M}$, this contradicts the definition of $c_{\mathcal M}$.
Hence $u$ is a critical point of $I$.

Finally, to see that $u$ does not change sign, we note that $I(u)=I(|u|)$ and $J(u)=J(|u|)$, so that $|u|$ is a minimizer of $I|_{\mathcal{M}}$ as well. Hence $|u|$ is a critical point of $I$ by the considerations above. By Remark \ref{sec:regularity},
$|u| \in C^{2}(\mathbb{R}^{2})$ and $-\Delta |u| + q |u| = 0$ in $\mathbb{R}^{2}$
with some function $q \in L_{loc}^{\infty}(\mathbb{R}^{2})$.
Hence, the strong maximum principle and the fact that $u \neq 0$ implies that
$|u|>0$ in $\mathbb{R}^{2}$, which shows that $u$ does not change sign.
 \end{proof}

\begin{lemma}\label{lem4-3}
Let $p \geq 3$. For the energy values $c_g, c_{\mathcal M}, c_{mm}$ and $c_{mp}$ defined in the introduction, we then have $c_g= c_{\mathcal M} = c_{mm}= c_{mp}$.
\end{lemma}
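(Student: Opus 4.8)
The goal is to establish the chain of equalities $c_g = c_{\mathcal M} = c_{mm} = c_{mp}$ under the assumption $p \ge 3$. I already have from the introduction the inequalities $c_{\mathcal M} \le c_g \le c_{mp}$ in \eqref{eq:eq-comp-c-g-c-M}, so it suffices to close the loop by proving $c_{mp} \le c_{\mathcal M}$ together with the identification $c_{mm} = c_{\mathcal M}$. The key tool is the saddle-point structure established in Lemma~\ref{lem4-2}, which tells me that along each fibre $t \mapsto Q(t,u)$ the functional $I$ has a unique maximum, attained precisely at the unique point $t_u$ where the fibre meets $\mathcal M$.

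\textbf{Plan.} First I would show $c_{mm} = c_{\mathcal M}$. For fixed $u \in X \setminus \{0\}$, Lemma~\ref{lem4-2}(ii) gives that $\sup_{t>0} I(u_t) = I(Q(t_u,u)) = I$ evaluated at the unique point of the fibre lying in $\mathcal M$. Taking the infimum over $u \in X \setminus \{0\}$ on the left reproduces $c_{mm}$ by its definition \eqref{eq:def-c_mm}, while the right-hand values range exactly over $\{I(v) : v \in \mathcal M\}$ as $u$ ranges over $X \setminus \{0\}$ (since every $v \in \mathcal M$ equals $Q(t_v,v)$ with $t_v=1$, and conversely every fibre maximum lies in $\mathcal M$). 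Hence $c_{mm} = \inf_{u} \sup_t I(u_t) = \inf_{v \in \mathcal M} I(v) = c_{\mathcal M}$.

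\textbf{The main step} is $c_{mp} \le c_{\mathcal M}$. Here I would argue that for each $u \in \mathcal M$ the single fibre $t \mapsto Q(t,u)$, suitably reparametrised on $[0,1]$, furnishes an admissible path in $\Gamma$ whose maximal energy equals $I(u)$. Concretely, by Lemma~\ref{lem2-6} we have $I(Q(t,u)) \to -\infty$ as $t \to \infty$, so I may pick $T$ large with $I(Q(T,u)) < 0$; since $Q(t,u) \to 0$ in $X$ as $t \to 0^+$ (the rescaling $u_t(x)=t^2u(tx)$ has $\|u_t\|_X \to 0$, which one checks directly on each term of $\|\cdot\|_X$), the map $t \mapsto Q(t,u)$ extended by $0$ at $t=0$ and reparametrised to $[0,1]$ is continuous into $X$ and defines an element $\gamma \in \Gamma$. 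By the definition of $c_{mp}$ and Lemma~\ref{lem4-2}(ii),
\[
c_{mp} \le \max_{t \in [0,1]} I(\gamma(t)) = \sup_{t > 0} I(Q(t,u)) = I(Q(t_u,u)) = I(u),
\]
the last equality because $u \in \mathcal M$ forces $t_u = 1$. Taking the infimum over $u \in \mathcal M$ yields $c_{mp} \le c_{\mathcal M}$, and combined with \eqref{eq:eq-comp-c-g-c-M} this closes the chain to give \eqref{eq:equalitity-c}.

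\textbf{The delicate point} I expect to have to handle carefully is the continuity and the limiting behaviour of the fibre path at the endpoint $t \to 0^+$: I must verify that $\|Q(t,u)\|_X \to 0$, which requires checking not only the $H^1$-norm but also the weighted term $|Q(t,u)|_*$, where the logarithmic weight $\log(1+|x|)$ interacts with the scaling $x \mapsto tx$. A change of variables shows $|u_t|_*^2 = t^2 \int_{\R^2}\log(1+|x|/t)\,u^2\,dx$, and one must confirm this tends to $0$ as $t \to 0^+$ by dominated convergence together with the elementary bound $\log(1+|x|/t) \le \log(1+|x|) + \log(1+1/t)$; the factor $t^2$ dominates the $\log(1/t)$ growth, giving the desired decay. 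Once this technical verification is in place, the rest of the argument is a clean bookkeeping of the inequalities, and the conclusion follows.
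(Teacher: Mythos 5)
Your proof is correct and follows essentially the same route as the paper: the inequalities $c_{\mathcal M}\le c_g\le c_{mp}$ from \eqref{eq:eq-comp-c-g-c-M}, the identity $c_{mm}=c_{\mathcal M}$ from the fibre structure in Lemma~\ref{lem4-2}, and the fibre-path construction via Lemma~\ref{lem2-6} to close the loop with $c_{mp}\le c_{mm}=c_{\mathcal M}$. Your explicit verification that $\|Q(t,u)\|_X\to 0$ as $t\to 0^+$ (including the weighted term $|\cdot|_*$) simply spells out the detail the paper leaves implicit in its one-line deduction.
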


\begin{proof}
By (\ref{eq:eq-comp-c-g-c-M}) we have $c_{\mathcal M}\le c_g \le c_{mp}$, and by Lemma~\ref{lem4-2} we have $c_{\mathcal M} = c_{mm}$.
Moreover, from Lemma~\ref{lem2-6} we deduce that $c_{mp} \le c_{mm}$. Thus the claim follows.
\end{proof}

The {\em proof of Theorem~\ref{th1-1-1}} is now completed by merely combining Theorem~\ref{th1-1},  
Lemma~\ref{lem4-2} and Lemma~\ref{lem4-3}.

\section{The symmetric setting}

\indent

This section is devoted to the proofs of Theorem \ref{th1-2} and Corollary~\ref{coro1-3}.
Some parts of the proof of Theorem \ref{th1-2} are similar to the proof of Theorem \ref{th1-1} and will therefore only be outlined. From now on, we fix a closed subgroup $G$ of the orthogonal group $O(2)$, and we let $\tau\::\: G\rightarrow \{-1, 1\}$ be a group homomorphism. We also consider the action $\ast$ of $G$ on $X$ defined by (\ref{1-7}), and we assume that the corresponding invariant subspace
\begin{equation*}
    X_{G}:=\{u \in X\::\: A \ast u= u\ \text{for\ all}\ A \in G\},
\end{equation*}
is not the null space. Our aim is
to detect critical points of the restriction of
the functional $I$ to $X_G$.
By the principle of symmetric criticality (see \cite[Theorem 1.28]
{Willem-1996}), any critical point of the restriction of $I$
to $X_{G}$ (which we will denote by $I$ as well in the following)
is a critical point of $I$.

From Lemmas \ref{lem2-5} and \ref{lem2-6}, we easily deduce that
\begin{equation}
\label{def-mp-G}
  0 < m_{\rho} \leq c_{mp,G}:=\inf\limits_{\gamma \in \Gamma_G}
   \max_{t \in [0,1]}I(\gamma(t))< \infty,
\end{equation}
where $\Gamma_G=\left\{\gamma \in C\left([0,1],\: X_{G}\right)\ | \ \gamma(0)=0,
I(\gamma(1))<0\right\}$. Similar as in the proof of Lemma \ref{lem3-3}, we then deduce from Proposition \ref{prop3-2} that
there exists a sequence $\{u_{n}\}\subset X_{G}$ such that,
as $n\rightarrow\infty$,
\begin{equation}\label{5-1}
  I(u_{n})\rightarrow c_{mp,G},
  \quad \|I'(u_{n})\|_{X_{G}'}\left(1+\|u_{n}\|_{X}\right)\rightarrow 0
  \quad \text{and} \quad J(u_{n}) \rightarrow 0,
\end{equation}
where $J$ is defined in (\ref{eq:def-G}).
The invariance of $I$ under the action $G$ implies that $I'(v)w=0$ for all
$v \in X_{G}$ and $w \in X_{G}^{\perp}$, where $X_{G}^{\perp}$ denotes the
orthogonal complement of $X_{G}$ on $X$. Therefore, we have
$\|I'(v)\|_{X_{G}'}=\|I'(v)\|_{X'}$ for all $v \in X_{G}$.
Consequently, we may rewrite \eqref{5-1} as
\begin{equation}
\label{5-1-1}
  I(u_{n})\rightarrow c_{mp,G},
  \quad \|I'(u_{n})\|_{X'}\left(1+\|u_{n}\|_{X}\right)\rightarrow 0
  \quad \text{and} \quad  J(u_{n}) \rightarrow 0.
\end{equation}
We also have the following variant of Proposition~\ref{prop3-5}. Here we consider the subspace
$$
\Fix(G):= \{x \in \R^2\::\: Ax = x \quad \text{for all $A \in G$}\} \; \subset\; \R^2.
$$

\begin{proposition}\label{prop3-5-1}
Let $p>2$, and let $(u_{n})_n$ be a sequence in $X_G$ that
satisfies
\begin{equation}\label{3-2-1-1}
c:= \sup_{n \in \N}  I(u_{n})<\infty
  \qquad \text{and} \qquad \|I'(u_{n})\|_{X'}\left(1+\|u_{n}\|_{X}\right)\rightarrow 0, \quad J(u_{n}) \rightarrow 0 \quad \text{as $n \to \infty$.}
\end{equation}
Then, after passing to a subsequence,
one of the following occurs:
\begin{itemize}
\item[\rm(I)] $\|u_n\| \to 0$ and $I(u_n) \to 0$ as $n \to \infty$.
\item[\rm(II)] There exist points $y_{n} \in \Fix(G)$, $n \in \mathbb{N}$
such that
\begin{equation*}
    y_{n} \ast u_{n} \rightarrow u \quad \text{strongly\ in}\ X
    \ \text{as}\ n \rightarrow \infty
\end{equation*}
for some nonzero critical point $u \in X_G$ of $I$.
\end{itemize}
\end{proposition}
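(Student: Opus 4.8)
The plan is to adapt the proof of Proposition~\ref{prop3-5} to the symmetric setting, tracking $G$-invariance throughout and paying special attention to where the translation points $y_n$ can be chosen. The starting point is the same: by Proposition~\ref{lem3-4} the sequence $(u_n)_n$ is bounded in $H^1(\R^2)$. Assuming that alternative (I) fails for every subsequence, I would first establish the non-vanishing estimate
\[
\liminf_{n\rightarrow\infty}\sup_{y \in \Fix(G)}\int_{B_{2}(y)} u_{n}^{2}\,dx>0.
\]
The crucial difference from the non-symmetric case is that the supremum must be taken over $y \in \Fix(G)$ rather than all of $\R^2$. To justify this, I would exploit the $G$-invariance of $u_n$: if the standard concentration (over all $y \in \R^2$) occurs at some point $y_n \notin \Fix(G)$, then by invariance the mass also concentrates at every point in the orbit $G y_n$, and these orbit points drift apart unless $y_n$ stays near $\Fix(G)$. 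A quantitative version of this dichotomy forces the concentration points to lie within a bounded distance of $\Fix(G)$, after which they can be replaced by genuine points of $\Fix(G)$.

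\emph{Next I would} set $\tilde u_n := y_n \ast u_n$ with $y_n \in \Fix(G)$. Because $y_n \in \Fix(G)$, the translated functions $\tilde u_n$ remain in $X_G$: indeed, for $A \in G$ and $y \in \Fix(G)$ one checks that the translation $y \ast (\cdot)$ commutes with the action $A \ast (\cdot)$, since $A(x-y) = Ax - y$ when $Ay=y$. This invariance is what makes the Fix$(G)$ restriction indispensable---translating by an arbitrary vector would destroy membership in $X_G$. With this in hand, the remaining compactness argument proceeds verbatim as in Proposition~\ref{prop3-5}: using Lemma~\ref{lem2-1} the $|\tilde u_n|_*$ stay bounded, so $\tilde u_n \rightharpoonup u$ weakly in $X$ with $u \in X_G \setminus \{0\}$; Lemma~\ref{lem2-2}(i) gives strong $L^s$-convergence; the estimate $I'(\tilde u_n)(\tilde u_n - u) \to 0$ follows from (\ref{3-2-1-1}) together with the comparison $\|(-y_n)\ast u\|_X^2 \le C\|u_n\|_X^2$; and Lemmas~\ref{lem2-1} and~\ref{lem2-7} then upgrade weak to strong convergence $\tilde u_n \to u$ in $X$.

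\emph{Finally,} to see that $u$ is a critical point of $I$, I would repeat the last step of Proposition~\ref{prop3-5}: for arbitrary $v \in X$ one estimates $|I'(u)v| = \lim_n |I'(u_n)[(-y_n)\ast v]| \le C \lim_n \|I'(u_n)\|_{X'}\|u_n\|_X = 0$, again using that $\|(-y_n)\ast v\|_X \le C\|u_n\|_X$. Since $u \in X_G$ and $I'(u)=0$ on all of $X$, the principle of symmetric criticality is not even needed here, though it guarantees consistency with the restricted functional. \textbf{The main obstacle} I anticipate is precisely the symmetric non-vanishing step: unlike the plain case, one must argue that the concentration can be arranged over $\Fix(G)$, and this requires carefully combining the standard Lions vanishing lemma with the orbit structure of the $G$-action. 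For the simplest groups (e.g. when $\Fix(G)=\{0\}$, as for the rotation groups in Example~\ref{exam1-1}(iii)) the argument simplifies considerably, since then $y_n$ can be taken to be $0$ outright and no translation is needed.
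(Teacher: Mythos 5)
Your plan founders at the step you yourself flag as the main obstacle, and the gap is not a technicality. You propose to redo the proof of Proposition~\ref{prop3-5} inside the symmetric setting, with the Lions vanishing step replaced by the symmetric non-vanishing estimate $\liminf_n \sup_{y \in \Fix(G)}\int_{B_2(y)}u_n^2\,dx>0$, justified by the dichotomy ``the orbit points $Gy_n$ drift apart unless $y_n$ stays near $\Fix(G)$''. But for a finite group, orbit points drifting apart produces no contradiction: a bounded $G$-invariant sequence can perfectly well concentrate on a finite orbit escaping to infinity. For instance, with $G=\{id,-id\}$, $\tau\equiv 1$, the even two-bump functions $u_n=\phi(\cdot-y_n)+\phi(\cdot+y_n)$ with $|y_n|\to\infty$ are bounded in $H^1(\R^2)$, satisfy the ordinary non-vanishing over $\R^2$, yet vanish over $\Fix(G)=\{0\}$; similarly, for the rotation groups of Example~\ref{exam1-1}(iii) one can spread a fixed amount of mass over the finitely many points of an orbit going to infinity. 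What rules out such multi-bump behavior is not orbit geometry but the compactness of Cerami-type sequences satisfying \eqref{3-2-1-1} --- i.e.\ precisely the content of Proposition~\ref{prop3-5}, which guarantees that after translation \emph{all} the $L^2$-mass sits in a single bump. So your argument is circular at its crucial point: the ``quantitative dichotomy'' you invoke can only be established after one already has single-bump strong convergence. The same objection applies to your closing remark that when $\Fix(G)=\{0\}$ ``no translation is needed'' and the argument ``simplifies considerably'': that $u_n$ itself converges is then exactly the assertion to be proved, and no vanishing-lemma argument alone gives it.

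The paper avoids this trap by reversing the order of the two steps. It first applies Proposition~\ref{prop3-5} as a black box to the sequence $(u_n)_n\subset X_G\subset X$, obtaining strong convergence $\tilde y_n * u_n\to\tilde u$ in $X$ for \emph{some} points $\tilde y_n\in\R^2$. Only then does it correct the translation points: using the $G$-invariance $A_n*u_n=u_n$ together with the strong $L^2$-convergence, it shows (via the computation \eqref{eq:49}--\eqref{eq:key-limit}, whose upshot is $\lim_n\int_{\R^2}(\zeta_n*v)\tilde u\,dx=|\tilde u|_2^2>0$ with $\zeta_n=A_nz_n-z_n$) that the orbit displacements $|A\tilde y_n-\tilde y_n|$ are bounded uniformly in $A\in G$ and $n$; it then replaces $\tilde y_n$ by its Haar average $y_n:=\mu(G)^{-1}\int_G A\tilde y_n\,d\mu(A)\in\Fix(G)$, which differs from $\tilde y_n$ by a bounded vector, so that along a subsequence $y_n*u_n\to r*\tilde u=:u$ strongly; finally it verifies $u\in X_G$ using $y_n\in\Fix(G)$, exactly as in your last step. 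The parts of your proposal that are correct --- translations by points of $\Fix(G)$ commute with the action since $A(x-y)=Ax-y$ when $Ay=y$, and the convergence and criticality arguments of Proposition~\ref{prop3-5} carry over once the translation points lie in $\Fix(G)$ --- all rest on the unproved symmetric concentration step, so the proposal as it stands does not constitute a proof.
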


\begin{proof}
Suppose that $(I)$ does not hold. By Proposition~\ref{prop3-5}, we may pass to a subsequence such that
\begin{equation*}
    \tilde y_{n} \ast u_{n} \rightarrow \tilde u \quad \text{in}\ X \
    \text{as}\ n \rightarrow \infty
\end{equation*}
for suitable points
$\tilde y_{n} \in \mathbb{R}^{2}, n \in \mathbb{N}$ and some nonzero critical point $ \tilde u \in X$ of $I$. We first claim that
\begin{equation}
  \label{eq:supAG}
\sup \{ |A \tilde y_n -\tilde y_n|  \::\: A \in G,\: n \in \N\} < \infty.
\end{equation}
To see this, we set $z_n:=
-\tilde y_n$ for $n \in \N$, and we let $(A_n)_n \subset G$ be an arbitrary sequence. Since $A_n* u_n = u_n$ for every $n
\in \N$, we have
\begin{align}
|A_n * (z_n * \tilde u)- z_n * \tilde u |_2 \le  |A_n* ( z_n * \tilde u- u_n) |_2&+ |u_n- z_n *
\tilde u|_2= 2 |z_n* \tilde u - u_n|_2 \nonumber\\
=& 2 |\tilde u- \tilde y_n * u_n|_2  \to 0 \qquad \text{as $n \to \infty$.}    \label{eq:49}
\end{align}
Setting $v_n:=A_n* \tilde u$ and $\zeta_n:=A_n z_n-z_n$ for $n \in \mathbb N$, we also find that
\begin{align}
\bigl|A_n  * (z_n *\tilde u)- &z_n * \tilde u\bigr|_2^2 - 2|\tilde u|_2^2=
-2\tau(A_n) \int_{\R^2}\tilde u(A_n^{-1}
x-z_n)\tilde u(x-z_n)\,dx \nonumber\\
&=-2\tau(A_n) \int_{\R^2}\tilde u(A_n^{-1}(y -
\zeta_n))\tilde u(y)\,dy = -2  \int_{\R^2} (\zeta_n * v_n) \tilde u \,dy \label{eq:50}
\end{align}
Since $G$ is compact as a closed subgroup of $O(2)$, we may pass to a subsequence such that $A_n \to A \in G$ as $n \to \infty$, which implies that $v_n \to v:= A * \tilde u$ in $X$ and therefore
\begin{equation}
  \label{eq:zeta-n-3}
|\zeta_n * (v_n-v)|_2 = |v_n-v|_2 \to 0 \qquad \text{as $n \to \infty$.}
\end{equation}
Combining (\ref{eq:49}), (\ref{eq:50}) and (\ref{eq:zeta-n-3}) gives
\begin{equation}
  \label{eq:key-limit}
\lim_{n \to \infty} \int_{\R^2} (\zeta_n * v) \tilde u \,dz = |\tilde u|_2^2 >0.
\end{equation}
From this we deduce that $(\zeta_n)_n$ remains bounded, since otherwise $\zeta_n * v \rightharpoonup 0$ in $L^2(\R^2)$ after passing to a subsequence.
Since
$$
|A_n \tilde y_n -\tilde y_n|= |A_n z_n-z_n|= |\zeta_n| \qquad \text{for $n \in \mathbb N$,}
$$
we thus infer that also $A_n \tilde y_n-\tilde y_n$ remains bounded, and this gives (\ref{eq:supAG}).

We now replace $\tilde y_n$ by
$$
y_n:= \frac{1}{\mu(G)} \int_{G} A \tilde y_n\,d\mu(A) \; \in\; \Fix(G),\qquad n \in \mathbb N,
$$
where $\mu$ denotes the Haar measure of $G$. By (\ref{eq:supAG}) we infer that $y_n- \tilde y_n$ remains bounded in $\R^2$ as $n \to \infty$, and thus we may pass to a subsequence such that $y_n - \tilde y_n \to r \in \R^2$.
Consequently,
\begin{equation*}
   y_{n} \ast u_{n} \rightarrow u := r * \tilde u \quad \text{in}\ X \
    \text{as}\ n \rightarrow \infty
\end{equation*}
Finally, for $A \in G$ and $x \in \R^2$ we have, since $y_n \in \Fix(G)$,
\begin{align*}
[A * u](x) &= \lim_{n \to \infty} A* \Bigl( y_{n} \ast u_{n}\Bigr) = \tau(A) \lim_{n \to \infty} u_n (A^{-1}x- y_n)=
\tau(A) \lim_{n \to \infty} u_n (A^{-1}(x- y_n)) \\
&=\lim_{n \to \infty} [y_n * (A* u_n)](x)= \lim_{n \to \infty} [y_n * u_n](x) = u(x).
\end{align*}
We thus conclude that $u \in X_G$. This shows that alternative $(II)$ holds, as claimed.
\end{proof}

\begin{proof}[Proof of Theorem~\ref{th1-2} (completed)]
Using Proposition~\ref{prop3-5-1}, we may now complete the proof of Theorem~\ref{th1-2} precisely as we completed the proof of Theorem~\ref{th1-1} in the last part of Section~\ref{sec:exist-ground-state}, replacing $X$ by $X_G$ and $c_{mp}$ by $c_{mp,G}$.
\end{proof}

It thus remains to give the proof of Corollary \ref{coro1-3}.

\begin{proof}[Proof of Corollary \ref{coro1-3}]
Similarly as in Example~\ref{exam1-1}(iii) we consider, for $n \in \N$, the subgroup $G_n$ of $O(2)$
of order $2 \cdot 3^{n}$ generated by the (counter-clockwise) $\frac{\pi}{3^n}$-rotation
\begin{equation*}
    A_n \in O(2), \quad A_n x=\left(x_{1}\cos \frac{\pi}{3^n}-x_{2}
    \sin \frac{\pi}{3^n},\ x_{1}\sin\frac{\pi}{3^n}+ x_{2}
    \cos\frac{\pi}{3^n}\right) \quad \text{for}\
    x=(x_{1}, x_{2}) \in \mathbb{R}^{2}.
\end{equation*}
Let $\tau_n: G_n\rightarrow \{-1, 1\}$ be the homomorphism defined by
\begin{equation*}
    \tau_n(A_n^{j}) =(-1)^{j} \quad \text{for}\ j=1,\cdot\cdot\cdot,2 \cdot 3^{n},
\end{equation*}
where $A_n^{j}$ is the $\frac{j\pi}{3^n}$-rotation. By definition, we have that
\begin{equation*}
X_{G_{n+1}} \subset X_{G_{n}} \qquad \text{for $n \in \N$.}
\end{equation*}
Consequently, we also have that
$$
c_{mp,G_{n+1}} \ge c_{mp,G_n} \ge c_{mp,G_1} >0 \qquad \text{for all $n \in \N$.}
$$
Then Theorem \ref{th1-2} applies and yields a nonradial sign-changing solution $u_n \in X_{G_n}$ with $I(u_n)=c_{mp,G_n}$ for every $n \in \N$.
Suppose by contradiction that
$$
\limsup_{n \to \infty}  I(u_{n})<\infty.
$$
Applying Proposition~\ref{prop3-5-1} for fixed $n \in \N$ and using the fact that $\Fix(G_n)=\{0\}$ for all $n \in \N$, we may now pass to a subsequence such that
$$
u_n \to u \qquad \text{in $X$,}
$$
where
\begin{equation}
  \label{eq:intersection-symmetry}
u \in \bigcap_{n \in \mathbb N}X_{G_{n}},
\end{equation}
and $u \in X \setminus \{0\}$ is a critical point of $I$. As noted in Remark~\ref{sec:regularity}, it then follows that $u$ is of class $C^{2}$ on $\R^2$.
From (\ref{eq:intersection-symmetry}) we then deduce that
$$
u(x)= - u(A_n x ) \qquad \text{for all $x \in \R^2$, $n \in \N$.}
$$
Since $A_n x \to x$ for $x \in \R^2$ as $n \to \infty$, it follows that $u(x)= 0$ for $x \in \R^2$. This is a contradiction, and thus the claim follows.
\end{proof}

\vskip 0.6 true cm

\noindent\textbf{\Large Acknowledgements}
\vskip 0.2 true cm

This work was completed when the first author visited Goethe-University
of Frankfurt in 2016-2017, and he would like to thank Tobias Weth for
his hospitality. The first author was partially  supported by Natural Science Foundation
of China (Nos. 11171135, 51276081, 11571140, 11671077, 11601204) and Major Project 
of Natural Science Foundation of Jiangsu Province Colleges and Universities (No. 14KJA100001).

\vskip 0.6 true cm

\end{document}